\numberwithin{equation}{section}
\begin{document}
\title{From BCZ map to a discretized analog of the RH}
\author{Y.~Li}
\begin{abstract}
We investigate the properties of the BCZ map. Based on our findings, we define the moduli space associated with its excursions. Subsequently, we utilize the framework we build to establish a discretized analog of the Riemann hypothesis (RH) that holds in a stronger sense from a dynamical perspective. The analog is founded upon a reformulation of the RH, specifically in terms of estimates of $L^1$-averages of BCZ cocycle along periodic orbits of the BCZ map. The primary tool we will rely on is the generalized arithmetic sequence, which we will define and discuss.
\end{abstract}
\maketitle

\newtheorem{ddd}{Definition}[section]
\newtheorem{ttt}[ddd]{Theorem}
\newtheorem{rrr}[ddd]{Remark}
\newtheorem{ppp}[ddd]{Proposition}
\newtheorem{ccc}[ddd]{Corollary}
\newtheorem{llll}[ddd]{Lemma}
\newtheorem{con}[ddd]{Conjecture}
\renewcommand{\bold}[1]{\smallskip \noindent {\bf \boldmath #1 }\nopagebreak[4]}

\section{Introduction}\label{sec:intro}
\raggedbottom
\allowdisplaybreaks

In 1924, J\'er\^ome Franel \cite{Fr} and Edmund Landau \cite{La} showed that the Riemann hypothesis (RH) is equivalent to the statement that for all $\epsilon>0$,
\begin{equation}
\sum\limits_{i=1}^{A_n}\left|\rho_i-\frac{i}{A_n}\right|=O\left(n^{\frac{1}{2}+\epsilon}\right),   \label{40}
\end{equation}
where $\mathcal{F}(n):(\rho_i)_{i=0}^{A_n}$ is the Farey sequence of order $n$. In the sequel, $\epsilon$ will always be used in this context, and the proviso ``for all $\epsilon>0$" will not be repeated.\\

\bold{BCZ map.}
(\ref{40}) showed the strong connection between the RH and the Farey sequence and suggested the possibility of studying gaps statistics of the Farey sequence as an approach to the RH, which attracted many mathematicians to study the Farey sequence's properties. In \cite{Ha70}, R.R.Hall studied the asymptotic analysis of the sum of the square of the 1-gap difference of the Farey sequence and computed the distribution of the gaps of the Farey sequence which was later referred to as Hall's distribution in \cite{At}. When proving a conjecture of R.R.Hall \cite{Ha94} about the asymptotic analysis of the sum of the square of the h-gap difference of the Farey sequence, Boca, Cobeli, and Zaharescu \cite{Bo} constructed the BCZ map:
$$T(a,b)=\left(b,\left[\frac{1+a}{b}\right]b-a\right),$$
which is defined on the Farey triangle:
$$\Omega:=\left\{(a,b)\in\mathbb{R}^2 \mid a,b\in(0,1],a+b>1\right\}.$$
We call $k(a,b):=\left[\frac{1+a}{b}\right]$ the itinerary function (or index function) of $(a,b)$.

There are multiple results concerning various statistical properties of the Farey sequence following the construction of the BCZ map. Augustin-Boca-Cobeli-Zaharescu \cite{Au} studied the h-spacing distribution between Farey points. Boca-Cobeli-Zaharescu \cite{Bo1} showed the distribution of the Farey sequence with odd denominators. Hall-Shiu \cite{sh}, Hall \cite{Ha1} gave some basic properties about the index of the Farey sequence, while Boca-Gologan-Zaharescu \cite{Go} demonstrated some asymptotic formulas regarding the distribution of the index function of the Farey sequence.

\bold{Horocycle flow.}
Motivated by the study of orbits of the horocycle flow, an aspect previously unexplored in the context of the BCZ map, Athreya-Cheung \cite{At} demonstrated that the BCZ map can be used to describe the first return map on the Poincar\'{e} section
$$\Omega':=\{\Lambda_{a,b}:=p_{a,b}SL(2,\mathbb{Z})\in\mathbb{R}^2| a,b\in(0,1], a+b>1\}$$
where $$p_{a,b}=
\begin{pmatrix}
a & b\\
0 & a^{-1}
\end{pmatrix}
$$
for the horocycle flow
$$ h_s=
\begin{pmatrix}
1 & 0\\
-s & 1
\end{pmatrix}
:s\in\mathbb{R}$$
on the space of unimodular lattices $X_2=SL(2,\mathbb{R})/SL(2,\mathbb{Z})$ in $R^2$ (\cite{At}, Theorem 1.1). The first return map $T:\Omega\to\Omega$ defined implicitly by
$$\Lambda_{T(a,b)}=h_{R(a,b)}\Lambda_{a,b},$$
where the first return time
$$R(a,b)=\frac{1}{ab}$$
is given explicitly by the BCZ map.

We call $k(a,b):=\left[\frac{1+a}{b}\right]$ the itinerary function (or index function) of $(a,b)$.

Athreya-Cheung demonstrated the BCZ map is an ergodic, zero-entropy map with respect to the Lebesgue probability measure $dm = 2dadb$ (\cite{At}, Theorem 1.2). Furthermore, they established that $f_{N,I}$ weakly converge to the Lebesgue probability measure $dm=2dadb$ in the Farey triangle where $f_{N,I}=\frac{1}{A_I(N)}\sum\limits_{i:\rho_i\in I}\delta_{T^i\left(\frac1Q,1\right)}$, $I=[\alpha,\beta]\subset[0,1]$ and $A_I(N):=|\mathcal{F}(n)\cap I|$ (\cite{At}, Theorem 1.3). This is proved by Kargaev-Zhigljavsky \cite{Ka} using different methods in the case of $I=[0,1]$. And it can be deduced from Theorem 6 of \cite{Ma}. This can also be derived from the well-known equidistribution principle for closed horocycle on $X_2=SL(2,\mathbb{R})/SL(2,\mathbb{Z})$ by using the projection map. The equidistribution principle was established by Sarnak \cite{Sa} and Eskin-McMullen \cite{Es} for $[\alpha,\beta]=[0,1]$. Hejhal \cite{He96} proved it for any fixed $0<\alpha<\beta<1$. Stronger results where $\beta-\alpha$ is permitted to tend to zero with $N$ have been obtained by Hejhal \cite{He00} and Str\"ombergsson \cite{St}. Athreya-Cheung \cite{At} apply this result to reproduce a finding of Hall \cite{Ha70}. We utilize this result to derive Theorem \ref{106}.

\bold{Riemann hypothesis.}
The significance of the BCZ map is that it introduces a well-defined dynamic system that has a strong connection to the RH. So the investigation of this dynamical system may help the study of the RH.

Zagier \cite{Za} showed that proving an optimal rate of equidistribution for long periodic trajectories for horocycle flow on $X_2$ (that is, an optimal error term in Sarnak's theorem \cite{Sa}) is equivalent to the classical Riemann hypothesis.

For the Farey sequence of order $n$:
$\rho_0=0<\rho_1=\frac{1}{n}<\rho_2=\frac{1}{n-1}<\rho_3<\rho_4<\cdots<\rho_{A_n-1}=\frac{n-1}{n}<\rho_{A_n}=1$, we extend the sequence by setting $\rho_i=\rho_{A_n+i}$ for all $i\in\mathbb{Z}$ and let $\rho_i=\frac{p_i}{q_i}$ where $p_i,q_i\in\mathbb{N}$ and $(p_i,q_i)=1$.

Boca-Cobeli-Zaharescu \cite{Bo} showed that
\begin{llll}  \label{52}
$$T\left(\frac{q_{k}}{n},\frac{q_{k+1}}{n}\right)=\left(\frac{q_{k+1}}{n},\frac{q_{k+2}}{n}\right)$$
holds for all $k\in\mathbb{Z}$.
\end{llll}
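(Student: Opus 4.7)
The plan is to unpack the definition of $T$ at the proposed input and match the second coordinate against the classical recurrence for denominators of consecutive Farey fractions in $\mathcal{F}(n)$.

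Concretely, set $a=q_k/n$ and $b=q_{k+1}/n$. One first checks $(a,b)\in\Omega$: both coordinates lie in $(0,1]$ since $q_k,q_{k+1}\le n$, and $a+b>1$ since consecutive Farey denominators satisfy $q_k+q_{k+1}>n$. The first coordinate of $T(a,b)$ equals $b=q_{k+1}/n$ by definition, so the claim reduces to
$$\left[\frac{1+a}{b}\right]b-a=\frac{q_{k+2}}{n},$$
i.e., writing $\kappa:=\left[\frac{n+q_k}{q_{k+1}}\right]$, to proving the denominator recurrence $\kappa\,q_{k+1}-q_k=q_{k+2}$.

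For this recurrence I would invoke the standard description of the successor of $p_{k+1}/q_{k+1}$ in $\mathcal{F}(n)$: a reduced fraction $p/q$ immediately follows $p_{k+1}/q_{k+1}$ if and only if $pq_{k+1}-qp_{k+1}=1$, $0<q\le n$, and $q+q_{k+1}>n$. Using the unimodular identity $p_{k+1}q_k-p_kq_{k+1}=1$ for the Farey neighbours $p_k/q_k,\,p_{k+1}/q_{k+1}$, the integer solutions of $pq_{k+1}-qp_{k+1}=1$ are parametrised by
$$(p,q)=(jp_{k+1}-p_k,\,jq_{k+1}-q_k),\qquad j\in\mathbb{Z}.$$
The constraint $q\le n$ is equivalent to $j\le(n+q_k)/q_{k+1}$, so the largest admissible $j$ is $\kappa$; for this value the remaining inequality $q+q_{k+1}>n$ is automatic since $(\kappa+1)q_{k+1}>n+q_k$ by the defining property of the floor. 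Thus $(p_{k+2},q_{k+2})=(\kappa p_{k+1}-p_k,\,\kappa q_{k+1}-q_k)$, giving the desired recurrence, and coprimality is automatic from $pq_{k+1}-qp_{k+1}=1$.

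Finally, the periodic convention $\rho_i=\rho_{A_n+i}$ extends the identity from $0\le k<A_n$ to every $k\in\mathbb{Z}$. The only nontrivial ingredient is the successor characterisation together with the unimodular identity for Farey neighbours, both of which are classical; the rest is direct substitution, so I do not anticipate a genuine obstacle.
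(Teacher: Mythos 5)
Your proof is correct in substance and is essentially the standard argument; the paper itself does not prove Lemma \ref{52} but cites it to Boca--Cobeli--Zaharescu, and the next-denominator recursion $q_{k+2}=\bigl\lfloor (n+q_k)/q_{k+1}\bigr\rfloor q_{k+1}-q_k$ via the unimodular/mediant characterisation of Farey neighbours is precisely the classical route. Two small points are worth tightening. First, you should note that $q=\kappa q_{k+1}-q_k>n-q_{k+1}\ge 0$ also certifies $q>0$, which is needed before invoking the successor characterisation. Second, the characterisation of ``successor in $\mathcal{F}(n)$'' as stated only applies for $0\le k\le A_n-2$: at $k=A_n-1$ the fraction $\rho_{A_n}=1$ has no successor inside $[0,1]$, so the periodic convention $\rho_{A_n+1}=\rho_1$ does not follow from that characterisation. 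You must either verify this wraparound case directly (one checks $\kappa=\lfloor(n+n)/1\rfloor=2n$ gives $2n\cdot 1-n=n=q_1$, and similarly for $k=A_n$), or work with the Farey fractions extended to all of $\mathbb{R}$ modulo $1$, where the successor of $1/1$ is $(n+1)/n$ and the denominators agree with $q_{A_n+1}=n$. With that adjustment the argument is complete, and periodicity then propagates the identity to all $k\in\mathbb{Z}$.
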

Therefore, $T^k\left(\frac{1}{n},1\right)=\left(\frac{q_{k}}{n},\frac{q_{k+1}}{n}\right)$, which implies $\left(\frac{1}{n},1\right)$'s orbit is periodic with period $A_n$.

Using Lemma \ref{52}, the RH equavalence (\ref{40}) can be transformed to
\begin{equation}
\frac{1}{A_n}\sum\limits_{i=1}^{A_n}\left|\chi^{(n)}\left(i,\left(\frac{1}{n},1\right)\right)\right|=O\left(n^{\frac{1}{2}+\epsilon}\right),   \label{41}
\end{equation}
where $\chi^{(n)}\left(i,\left(\frac{1}{n},1\right)\right):=\displaystyle\sum_{j=1}^{i}\left(R\left(T^{j-1}(\frac{1}{n},1)\right)-\frac{n^2}{A_n}\right)$ is a BCZ cocycle. (Details in \S\ref{96}.)

However, directly proving (\ref{41}) is quite challenging. Firstly, $R$ is an unbounded function, so the increment of the cocycle $R\left(T^{i-1}\left(\frac{1}{n},1\right)\right)-\frac{n^2}{A_n}$ can be very large, making the whole process very volatile. Secondly, we observe that when the x-coordinate of $T^i(\frac{1}{n},1)$ is very small, the absolute value of BCZ cocycle term $\chi^{(n)}\left(i,\left(\frac{1}{n},1\right)\right)$ tends to be relatively small, which we refer to as the reset term. However, it is very hard to give any good control of the reset term of the BCZ cocycle, which could otherwise serve as the foundation for a good control of the whole process. If we adopt the classical approach (M\"obius function), we would only have $\left|\chi^{(n)}\left(i,\left(\frac{1}{n},1\right)\right)\right|=O(n\log q_i)$, which is not good enough for our estimation, as it would only yield $O(n^{1+\epsilon})$ for the left-hand side of (\ref{41}).

\bold{Discretized analog of the RH.}
Instead, we can construct the BCZ cocycle's discrete approximation and prove the discretized analog of the RH, which brings us another path to approach the RH.
Let
$$\hat{k}(a,b):=\frac{k(a,b)+k^T(a,b)}{2},$$
where $k(a,b)=\left[\frac{1+a}{b}\right]$ is the itinerary function of the Farey triangle and $k^T(a,b)=k\left(T^{-1}(a,b)\right)=k(b,a)$.

We can replace $R-\frac{n^2}{A_n}$ with $\hat{k}-3$ in (\ref{41}) and prove the discretized analog of the RH holds in a stronger sense.
\begin{ttt}  (main result)  \label{55}

For $n\geq1$, $i\geq1$, let $\theta'_i=\hat{k}\left(T^{i-1}(\frac{1}{n},1)\right)-3$ and $\theta_i=\sum\limits_{j=1}^i\theta'_j$. Then we have
\begin{equation}
\frac{1}{A_n}\sum\limits_{i=1}^{A_n}\left|\theta_i\right|=O(n^{\epsilon}). \label{42}
\end{equation}
\end{ttt}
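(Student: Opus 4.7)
The plan is to unfold $\theta_i$ along the closed orbit as a combination of two partial sums of Farey indices, exploit the palindromic symmetry of the Farey denominators, and then perform an excursion-by-excursion analysis using the generalized arithmetic sequence framework.

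Using Lemma~\ref{52}, we have $T^{j-1}(1/n,1)=(q_{j-1}/n,q_j/n)$, so set $K_j:=k(q_j/n,q_{j+1}/n)=\lfloor(n+q_j)/q_{j+1}\rfloor$. The identity $k^T(a,b)=k(b,a)$, combined with the Farey recurrence $K_{i-1}q_i=q_{i-1}+q_{i+1}$ and the neighbor bound $q_{i-1}+q_i>n$, forces $k(q_{i+1}/n,q_i/n)=K_{i-1}$, whence $\hat k(T^{j-1}(1/n,1))=(K_{j-1}+K_{j-2})/2$. Telescoping gives
\[
\theta_i=\tfrac12(S_i+\tilde S_i),\quad S_i:=\sum_{j=0}^{i-1}(K_j-3),\quad \tilde S_i:=\sum_{j=-1}^{i-2}(K_j-3).
\]
From the Farey palindrome $q_i=q_{A_n-i}$ together with the explicit evaluation $K_{A_n-1}=k(1,1/n)=2n$, one checks $K_i=K_{A_n-2-i}$ for $0\leq i\leq A_n-2$; this yields $\theta'_j=\theta'_{A_n+1-j}$ and hence the reflection identity $\theta_i+\theta_{A_n-i}=\theta_{A_n}=-1$. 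It therefore suffices to bound $|\theta_i|$ on the first half $1\leq i\leq A_n/2$.

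Next, I would decompose the first half of the orbit into excursions using the moduli space constructed earlier. Each excursion $B_\ell$ is a maximal block on which $q_j$ follows a single generalized arithmetic sequence: $K_j=2$ on the bulk (so $K_j-3=-1$ and both $S_i,\tilde S_i$ drift linearly), interrupted by a bounded number of controlled ``peak'' indices. The closed form for such a generalized arithmetic sequence lets us evaluate $S_i$ and $\tilde S_i$ explicitly on each block; since $\tilde S_i$ is $S_i$ shifted by one index, their peak contributions occur at staggered positions, and averaging yields a residue controlled exactly by the block's parameters. This produces a block-level estimate $\max_{i\in B_\ell}|\theta_i|=O(n^\epsilon)$; summing over excursions in the first half and dividing by $A_n$ then gives the theorem.

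The crux of the argument is establishing the block-level $O(n^\epsilon)$ cancellation uniformly across excursion classes. A priori $S_i$ and $\tilde S_i$ are two cocycle partial sums of typical size $O(n)$, and a random-walk heuristic would give only $O(n^{1/2+\epsilon})$ for their average, matching the Franel--Landau equivalent of RH but falling short of Theorem~\ref{55}. The stronger $O(n^\epsilon)$ bound is a genuinely dynamical input: it requires the arithmetic rigidity of the generalized arithmetic sequences to force the peaks of $S_i$ to line up with the troughs of $\tilde S_i$ within each excursion, not merely on average. A secondary difficulty is summing cleanly across excursion classes, for which a divisor-type bound on the number of excursions of each peak profile is the natural number-theoretic input.
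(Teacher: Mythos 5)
Your reduction via the palindrome symmetry is correct as stated: the relations $\theta'_j=\theta'_{A_n+1-j}$ and $\theta_i+\theta_{A_n-i}=\theta_{A_n}=-1$ do hold (the latter using Theorem~\ref{73}), so it suffices to treat the first half of the orbit. However, this reduction is not used in the paper and does not resolve the fundamental difficulty.

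The decisive gap is the claimed block-level bound $\max_{i\in B_\ell}|\theta_i|=O(n^\epsilon)$. This is simply false for any block touching the endpoints: a direct calculation gives $\hat k(1/n,1)=\tfrac{1}{2}\bigl(k(1/n,1)+k(1,1/n)\bigr)=\tfrac{1}{2}(1+2n)$, so $\theta_1=n-\tfrac{5}{2}\asymp n$, and by the symmetry you established, $\theta_{A_n-1}\asymp -n$ as well. A max bound over a block containing index $1$ or $A_n-1$ is therefore $\Theta(n)$, not $O(n^\epsilon)$. What the theorem actually asserts is only an $L^1$-average bound, and the saving comes precisely because $|\theta_i|$ decays rapidly once one moves into the interior of the orbit. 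Your proposal never identifies a mechanism to capture this decay; phrases like ``averaging yields a residue controlled exactly by the block's parameters'' and ``the peaks of $S_i$ line up with the troughs of $\tilde S_i$'' do not produce the $\epsilon$ in the exponent, which is genuinely logarithmic in nature. The ``divisor-type bound on the number of excursions of each peak profile'' is likewise not the relevant arithmetic input.

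The paper's actual mechanism is different in kind: it defines the energy function $E(\hat k-3;a,b)$ on the moduli space $\Xi=(0,1]^2$ of excursions (Definition~\ref{68}), proves the reset control $\bigl|\sum_{m=0}^{s-1}(\hat k(a_m,b_m)-3)\bigr|<\tfrac{a_s}{a_0}+\tfrac{a_0}{a_s}$ (Theorem~\ref{66}, via the identity property of $h$ on generalized arithmetic sequences and elimination of local maxima), and then runs an induction on $d=\max\{1/a,1/b\}$ restricted to the golden-ratio region $\Delta$ to show $E(\hat k-3;a,b)=O\bigl((ab)^{-1-\epsilon}\bigr)$ (Theorem~\ref{87}). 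Each inductive step decomposes the excursion into sub-excursions whose endpoint depths are strictly smaller, applies the induction hypothesis to them, and uses Lemma~\ref{70} plus the reset control to assemble the pieces. The $\epsilon$ arises from the geometric decrease in the constant $D<1$ across recursion levels. Specializing to $(a,b)=(1/n,1/n)\in\Delta$ yields $\sum_{i=1}^{A_n}|\theta_i|=O(n^{2+\epsilon})$ and hence Theorem~\ref{55}. Your proposal has none of the three essential ingredients: the energy function on the moduli space, the reset control via the $h$-identity, and the inductive scheme over nested sub-excursions.
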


The reason why we call (\ref{42}) the discretized analog of (\ref{41}) is that $\hat{k}-3$ is very close to $R-\frac{n^2}{A_n}$. Although both functions are unbounded, their difference, however, is bounded! To be specific, we have $-1\leq R-\hat{k}<2$ (Lemma \ref{54}). Walfisz \cite{Wa} showed that
$$A_n=\frac{3}{\pi^2}n^2+O\left(n(\log n)^{\frac23}(\log\log n)^{\frac43}\right),$$
so $\frac{n^2}{A_n}\longrightarrow \frac{\pi^2}{3}$, which is also the average value of $R$ in the Farey triangle. Thus, we know that the difference of $R-\frac{n^2}{A_n}$ and $\hat{k}-3$ is bounded.

This exciting result suggests we can try to approach the RH using an approximation method, which possibly represents a new path towards the RH. We will discuss this idea in \S\ref{101} and \S\ref{32877} when we summarize the result and raise some possible questions for further research from the perspective of function analysis.

\bold{Proof strategy.}
There is one kind of BCZ orbit we are particularly interested in, namely the excursion (Definition \ref{74}). An excursion is a section of BCZ orbit where the x-coordinate of the middle point exceeds those of both endpoints. An interesting fact is that, for any two positive numbers $a,b$ in $(0,1]^2$, there exists a unique excursion where the x-coordinates of the starting point and the ending point are $a$ and $b$, respectively. Therefore, we can define the moduli space of the excursion as $\Xi:(0,1]^2$ (Definition \ref{63}). To prove this property, we provide a detailed description of all points comprising an excursion (Remark \ref{64}). Additionally, we give the asymptotic formula for the length of an excursion (Lemma \ref{65}).

It is noteworthy that the left-hand sides of (\ref{41}) and (\ref{42}) both stem from the periodic orbit of order $n$, which itself constitutes an excursion corresponding to $\left(\frac{1}{n},\frac{1}{n}\right)$. We can generalize these concepts for any function $f$ defined on the Farey triangle and for any excursion, ultimately arriving at the energy function $E(f;a,b)$ (Definition \ref{68}).

The RH equivalence (\ref{41}) is equivalent to
$$E\left(R-\frac{n^2}{A_n};\frac{1}{n},\frac{1}{n}\right)=O\left(n^{\frac{5}{2}+\epsilon}\right),$$
while the discretized analog (\ref{42}) is equivalent to
\begin{equation}
E\left(\hat{k}-3;\frac{1}{n},\frac{1}{n}\right)=O\left(n^{2+\epsilon}\right). \label{432478}
\end{equation}

We prove the main theorem by obtaining a more generalized version of the result (Theorem \ref{87})
$$E|_{\Delta}(\hat{k}-3;a,b)=O\left(\frac{1}{(ab)^{1+\epsilon}}\right),$$
where $\Delta$ represents the golden ratio area of $\Xi$ (Defined in (\ref{7986969})). We use induction on the energy function on the moduli space of excursions to complete the proof of this generalized result. Subsequently, we select $(a,b)=\left(\frac{1}{n},\frac{1}{n}\right)$ to finalize the proof of (\ref{432478}), which is equivalent to the main result Theorem \ref{55}.

\bold{Main tools.}
We define the generalized arithmetic sequence in Definition \ref{58}, which is a sequence where any term can divide the sum of two neighboring terms. The uniqueness of the generalized arithmetic sequence lies in its numerous interesting properties including closure (Lemma \ref{59}) and identity (Theorem \ref{60}).
\begin{itemize}
\item Closure: If a local maximum is eliminated from any generalized arithmetic sequence, it will retain its generalized arithmetic sequence status.
\item Identity: Regarding function $h$, which is defined on the space of generalized arithmetic sequences (Definition \ref{61}), eliminating a local maximum from the sequence will not alter the value of $h$.
\end{itemize}
The reason why we want to study the generalized arithmetic sequence is that the x-coordinates (or y-coordinates) of any BCZ orbit form a generalized arithmetic sequence(Remark \ref{62}). According to the definition of the BCZ map, the y-coordinate of every point in the orbit is equal to the x-coordinate of the subsequent point, which means that all the information about a BCZ orbit can be deduced solely from the x-coordinates or y-coordinates of the orbit. In Remark \ref{62}, we show some examples of generalized arithmetic sequences, including two derived from BCZ orbits and one derived from negative continued fraction.

The reason why we can prove the discretized analog of the RH is that we have good control over $\hat{k}-3$. In fact, for an excursion $(a_j,b_j)_{j=0}^s$, let $\zeta_i=\sum\limits_{j=0}^{i-1}\left(\hat{k}(a_j,b_j)-3\right)$. We have
\begin{itemize}
\item Reset control: $\zeta_s$ can be controlled by the ratio of the x-coordinates of two endpoints of the excursion. (Theorem \ref{66})
\item Overall monotonicity: For $1<i<s-1$, we have $\zeta_1>\zeta_i>\zeta_{s-1}$. (Theorem \ref{84})
\end{itemize}

Obtaining Theorem \ref{55} makes us wonder if there are other functions that could satisfy similar asymptotic formulas which would benefit the possible approximation process to the RH. For $\hat{k}-3$, it satisfies some conditions which lead to Theorem \ref{55}, so we can relax some conditions to get more generalized results. (see Theorem \ref{56} and Theorem \ref{57}) We also provide a family of functions that satisfy the condition of Theorem \ref{56} as an example.

\subsection{Plan of paper}
The remainder of the introduction is about the reformulation of the RH in terms of estimates of $L^1$-averages of BCZ cocycle along periodic orbits of the BCZ map. In \S2, we define the generalized arithmetic sequence (Definition \ref{58}). We establish some of its properties, including closure (Lemma \ref{59}), identity (Theorem \ref{76}, Theorem \ref{60}), and show some examples (Remark \ref{62}). In \S3, we give some basic properties of the $\hat{k}-3$ and explain why it is reasonable to use it as the discrete approximation. In \S4, we introduce the key technical device, the notion of the excursion of the BCZ map, and the moduli space of excursions (Definition \ref{63}). We also establish the main technical results, including the reset control (Theorem \ref{66}) and overall monotonicity (Theorem \ref{84}). In \S5, we define the energy function of an excursion (Definition \ref{68}) and prove the main result by using the induction on the energy function on the moduli space of excursions. In \S6, we obtain the sufficient conditions for the asymptotic formula in the general case (Theorem \ref{56}, Theorem \ref{57}). In \S7 and \S8, we summarize the result in the function analysis point of view and share some ideas, raise some questions for further study.

\subsection{Reformulation of the RH}
\label{96}
We mentioned that (\ref{40}) can be transformed to (\ref{41}). Here's the step-by-step process.

$(\rho_i)_{i=0}^{A_n}$ is the Farey sequence of order $n$. Let $\rho_i=\frac{p_i}{q_i}$ where $p_i, q_i\in\mathbb{N}$ and $(p_i,q_i)=1$. Define $\eta_i=\rho_i-\frac{i}{A_n}$ and $\eta_i'=\frac{1}{q_{i-1}q_i}-\frac{1}{A_n}$ for $1\leq i\leq A_n$. Since $\rho_i=\sum\limits_{j=1}^i(\rho_j-\rho_{j-1})=\sum\limits_{j=1}^i\frac{1}{q_{j-1}q_j}$, thus
$$\eta_i=\rho_i-\frac{i}{A_n}=\sum\limits_{j=1}^i\frac{1}{q_{j-1}q_j}-\frac{i}{A_n}=\sum\limits_{j=1}^i\eta_j',$$
therefore, the RH is equivalent to
$$\sum\limits_{i=1}^{A_n}|\eta_i|=\sum\limits_{i=1}^{A_n}\left|\sum\limits_{j=1}^i\eta_j'\right|=O\left(n^{\frac{1}{2}+\epsilon}\right).$$
Introducing $\iota_i=n^2\eta_i=n^2(\rho_i-\frac{i}{A_n})$ and $\iota_i'=n^2\eta_i'=\frac{n^2}{q_{i-1}q_i}-\frac{n^2}{A_n}$ for $1\leq i\leq A_n$. Then
$$\iota_i=n^2\eta_i=n^2\sum\limits_{j=1}^i\eta_i'=\sum\limits_{j=1}^i\iota_j',$$
therefore, the RH is equivalent to
\begin{equation}
\sum\limits_{i=1}^{A_n}|\iota_i|=O\left(n^{\frac{5}{2}+\epsilon}\right)     \label{43}
\end{equation}
or
$$\frac{1}{n^2}\sum\limits_{i=1}^{A_n}|\iota_i|=\frac{1}{n^2}\sum\limits_{i=1}^{A_n}\left|\sum\limits_{j=1}^i\iota_j'\right|=O\left(n^{\frac{1}{2}+\epsilon}\right).$$
Given that $A_n=\frac{3}{\pi^2}n^2+O\left(n\log n\right)$, the RH is also equivalent to
$$\frac{1}{A_n}\sum\limits_{i=1}^{A_n}|\iota_i|=O\left(n^{\frac{1}{2}+\epsilon}\right). $$
By lemma \ref{52}, $T^i\left(\frac{1}{n},1\right)=T^i\left(\frac{q_0}{n},\frac{q_1}{n}\right)=\left(\frac{q_i}{n},\frac{q_{i+1}}{n}\right)$. Thus, $R(T^i(\frac{1}{n},1))=\frac{n^2}{q_iq_{i+1}}$, which implies that
$$\iota_i'=R\left(T^{i-1}\left(\frac{1}{n},1\right)\right)-\frac{n^2}{A_n}. $$
Since $A_n=\frac{3}{\pi^2}n^2+O(n\log n)$, $\frac{n^2}{A_n}\rightarrow\frac{\pi^2}{3}$ as $n\rightarrow+\infty$. Consequently, when $n$ is very large, $\iota_i'\approx R\left(T^{i-1}\left(\frac{1}{n},1\right)\right)-\frac{\pi^2}{3}$.

BCZ cocycle $\chi^{(n)}:\mathbb{Z}^+\times\Omega\rightarrow\mathbb{R}$ can be defined over $T$:
$$\chi^{(n)}\left(i,(a,b)\right):=\sum_{j=1}^{i}\left(R\left(T^{j-1}(a,b)\right)-\frac{n^2}{A_n}\right), $$
which satisfies the cocycle property:
$$\chi^{(n)}\left(i_1+i_2,(a,b)\right)=\chi^{(n)}\left(i_2,T^{i_1}(a,b)\right)+\chi^{(n)}\left(i_1,(a,b)\right). $$
Moreover, we have
\begin{equation}
\chi^{(n)}\left(i,\left(\frac{1}{n},1\right)\right)=\sum_{j=1}^{i}\left(R\left(T^{j-1}(\frac{1}{n},1)\right)-\frac{n^2}{A_n}\right)=\sum_{j=1}^{i}\iota_{j}'=\iota_i. \label{48}
\end{equation}
Therefore, according to (\ref{43}), the RH is equivalent to
$$\sum\limits_{i=1}^{A_n}\left|\chi^{(n)}\left(i,\left(\frac{1}{n},1\right)\right)\right|=O\left(n^{\frac{5}{2}+\epsilon}\right)$$
or
$$\frac{1}{A_n}\sum\limits_{i=1}^{A_n}\left|\chi^{(n)}\left(i,\left(\frac{1}{n},1\right)\right)\right|=O\left(n^{\frac{1}{2}+\epsilon}\right). $$

We mentioned that $\hat{k}$ is very close to $R$. In fact, we have the following inequality:
\begin{llll}    \label{54}
$$-1\leq R-\hat{k}<2.$$

To be specific:
\begin{itemize}
\item $-\frac{1}{2}\leq R-\hat{k}<\frac{3}{2}+\frac{2}{n}$ when $(k,k^T)=(n,1)$ or $(1,n)$, $n\geq4$.
\item $\frac{4}{15}\leq R-\hat{k}<\frac{3}{2}$ when $(k,k^T)=(4,2)$ or $(2,4)$.
\item $-\frac{1}{2}\leq R-\hat{k}<\frac{19}{15}$ when $(k,k^T)=(3,1)$ or $(1,3)$.
\item $-\frac{5}{12}\leq R-\hat{k}<\frac{5}{3}$ when $(k,k^T)=(3,2)$ or $(2,3)$.
\item $-\frac{1}{2}< R-\hat{k}<\frac{7}{12}$ when $(k,k^T)=(2,1)$ or $(1,2)$.
\item $-1\leq R-\hat{k}<2$ when $(k,k^T)=(2,2)$.\\
\end{itemize}

\end{llll}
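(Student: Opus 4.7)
The plan is to prove the lemma by an explicit case analysis over the admissible index pairs $(k, k^T)$.

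First I would enumerate the admissible pairs. The defining constraints $(1+a)/b \in [k, k+1)$, $(1+b)/a \in [k^T, k^T+1)$, combined with $a + b > 1$ and $(a, b) \in (0, 1]^2$, leave only a short list. For instance, if $k \geq 5$ and $k^T \geq 2$, then $a \leq (1+b)/2 \leq (1 + (1+a)/5)/2$ forces $a \leq 2/3$, while $a + b > 1$ with $b \leq (1+a)/5$ forces $a > 2/3$, a contradiction; analogous one-line computations rule out $(3,3), (3,4), (4,3), (4,4)$, and show that $(k, k^T) = (1,1)$ forces $a > 1$. The survivors are exactly the pairs in the statement: $(n,1)$ and $(1,n)$ for $n \geq 2$, together with $(2,2), (2,3), (3,2), (2,4), (4,2)$.

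Second, on each region $\Omega_{k, k^T}$ the quantity $\hat{k} = (k+k^T)/2$ is constant, so $R - \hat{k} = 1/(ab) - (k+k^T)/2$, and the problem reduces to optimizing $ab$ over the convex polygon cut out by $(1+a)/(k+1) < b \leq (1+a)/k$, $(1+b)/(k^T+1) < a \leq (1+b)/k^T$, $a \leq 1$, $b \leq 1$, and $a + b > 1$. The extrema of $ab$ are attained at vertices of this polygon, obtained by intersecting pairs of the bounding curves; in each case I would identify the pair of tight constraints producing the minimum and maximum of $ab$ and read off the corresponding $R - \hat{k}$. The finite cases $(2,2), (2,1), (3,1), (3,2), (4,2)$ (and their transposes) each require checking only a handful of vertices.

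Third, for the infinite family $(n, 1)$ with $n \geq 4$ I would parameterize explicitly. The supremum is approached at the vertex where $b = (1+a)/(n+1)$ meets $a + b = 1$, namely $(a, b) = (n/(n+2), 2/(n+2))$, where $1/(ab) = (n+2)^2/(2n)$ and $(n+2)^2/(2n) - (n+1)/2 = 3/2 + 2/n$, strictly because $a + b = 1$ is excluded from $\Omega$; the infimum is attained at $(a, b) = (1, 2/n)$, giving $R - \hat{k} = n/2 - (n+1)/2 = -1/2$. The symmetry $a \leftrightarrow b$ transfers everything to $(1, n)$. Aggregating gives the global bound $-1 \leq R - \hat{k} < 2$: the infimum $-1$ is attained on $\Omega_{2,2}$ at $(a,b) = (1,1)$, while the supremum $2$ is approached but never attained — both on $\Omega_{2,2}$ as $(a,b)$ tends to $(1/2, 1/2)$ and on $\Omega_{4,1}$ as $(a,b)$ tends to $(2/3, 1/3)$.

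The main obstacle will be bookkeeping: with the handful of index pair families to handle and the need to distinguish strict from closed inequalities in the polygon description of each $\Omega_{k, k^T}$, one must track carefully which bounds are attained versus merely approached. In particular, for the $(n,1)$ family one must confirm that the claimed extremal vertices actually lie in $\overline{\Omega_{n,1}}$ for every $n \geq 4$. Once the admissible pairs are classified and the correct vertices identified, all numerical values in the statement reduce to direct computation.
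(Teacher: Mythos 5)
The paper states Lemma \ref{54} without providing a proof, so there is no proof of record to compare against; your plan is a self-contained and valid way to establish it. The structure is right: enumerate the achievable index pairs $(k,k^T)$ (and your short contradiction arguments — e.g.\ adding the two constraints for $(1,1)$ to force $a+b>2$, or bounding $a$ from above and below in the $k\geq5,\ k^T\geq2$ case — are correct), then on each region optimize $\frac{1}{ab}-\hat{k}$ with $\hat{k}$ locally constant, and finally aggregate. Your vertex computations for the infinite family $(n,1)$, $n\geq4$ are correct, including the observation that the extremal vertex $\bigl(\tfrac{n}{n+2},\tfrac{2}{n+2}\bigr)$ lies on the closure only once $n\geq4$ (for $n=2,3$ the constraint $a>\tfrac{1+b}{2}$ kicks in first, which is exactly why the lemma lists those two families separately with the different bounds $\tfrac{7}{12}$ and $\tfrac{19}{15}$). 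The global $-1$ is attained at $(1,1)\in\Omega_{2,2}$ and the global $2$ is a supremum approached at $(\tfrac12,\tfrac12)$ on $\partial\Omega_{2,2}$ and at $(\tfrac23,\tfrac13)$ on $\partial\Omega_{4,1}$, as you say.

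One point you should make explicit, since you lean on it: $ab$ is not linear, so ``extrema at vertices'' of the polygon is not automatic. What saves the argument is a one-line boundary check. Along the curves $b=\tfrac{1+a}{j}$ and $a=\tfrac{1+b}{j}$ the product $ab$ is a strictly increasing function of the free variable, and along $a=1$ or $b=1$ it is linear, so on those edges both the minimum and maximum of $ab$ occur at endpoints; along $a+b=1$ the function $ab=a(1-a)$ is concave, so its \emph{minimum} over the edge is at an endpoint while its maximum could be at the interior point $(\tfrac12,\tfrac12)$. Hence the minimum of $ab$ (equivalently the supremum of $R-\hat{k}$) on each closed region is always attained at a vertex, which is what you use for the upper bounds; for the lower bounds (maximum of $ab$) you only need to additionally check whether $(\tfrac12,\tfrac12)$ lies on the closure, and it never beats the vertex maximizer in any of the listed cases. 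With that caveat recorded, the proof goes through and reproduces every constant in the statement.
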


\section{Generalized arithmetic sequence}
\label{102}
In this section, we will introduce the generalized arithmetic sequence and present its basic properties. These include closure (Lemma \ref{59}), identity (Theorem \ref{76}, Theorem \ref{60}), and some examples (Remark \ref{62}).

\begin{ddd}      \label{58}
Let $(a_i)_{i=1}^n$ (resp. $(a_i)_{i=1}^{+\infty}$ and $(a_i)_{i=-\infty}^{+\infty}$) be a sequence of positive real numbers.

We call it a generalized arithmetic sequence when
$$a_i\mid a_{i-1}+a_{i+1} $$
for $i\in[2,n-1]$ (resp. $i\in[2,+\infty)$ and $i\in\mathbb{Z}$).

We call $(a_i)_{i=1}^n$ a cyclic generalized arithmetic sequence if
\begin{itemize}
\item $n=1$
\item $a_1\mid a_2+a_2$, $a_2\mid a_1+a_1$ when $n=2$
\item $a_i\mid a_{i-1}+a_{i+1}$ for $i\in[2,n-1]$, $a_1\mid a_2+a_n$, $a_n\mid a_{n-1}+a_1$ when $n\geq3$
\end{itemize}

\end{ddd}

\begin{rrr}

$(a_i)_{i=1}^n$ being a cyclic generalized arithmetic sequence is equivalent to a periodic sequence $(a_i)_{i=-\infty}^{+\infty}$ being a generalized arithmetic sequence, where $a_i=a_{n+i}$ for $i\in\mathbb{Z}$.

For simplicity, when discussing the cyclic generalized arithmetic sequence, we let $a_i=a_{n+i}$ for $i\in\mathbb{Z}$. For example, we use $a_{-1},a_{n+1}$ to refer $a_{n-1},a_1$.
\end{rrr}
According to example 1 of Remark \ref{62}, we know that the x-coordinate of any section of BCZ orbit: $(x_0,x_1,x_2,\cdots,x_n)$ is a generalized arithmetic sequence. By the definition of the Farey triangle, we have $1<x_i+y_i=x_i+x_{i+1}$, $x_i\in(0,1]$, therefore, we can give this special kind of generalized arithmetic sequence a new name: BCZ sequence.

Next, we demonstrate the closure property of the generalized arithmetic sequence, that is, if we eliminate a local maximum from a (resp. cyclic) generalized arithmetic sequence, it will still be a (resp. cyclic) generalized arithmetic sequence.

\begin{llll}(Local maximum)

If $a_m$ is a local maximum of a (resp. cyclic) generalized arithmetic sequence $(a_i)_{i=1}^n$, which means that $a_m>a_{m-1},a_{m+1}$ where (resp. $m\in[1,n]$) $m\in[2,n-1]$, then we have
$$a_m=a_{m-1}+a_{m+1}. $$
This also holds for $(a_i)_{i=1}^{+\infty}$ when $m>1$ or $(a_i)_{i=-\infty}^{+\infty}$ when $m\in\mathbb{Z}$.

\end{llll}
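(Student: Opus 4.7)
The statement is an almost immediate consequence of the defining divisibility condition, so my plan is essentially a one-paragraph numerical argument, but I will lay it out carefully to make sure the boundary cases (cyclic, one-sided infinite, two-sided infinite) are all covered by the same reasoning.

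First I would invoke the defining property of a generalized arithmetic sequence at index $m$: since $m$ is an interior index in every one of the listed cases (the cyclic convention $a_i = a_{n+i}$ makes every index interior in the cyclic case, $m>1$ handles the one-sided infinite case, and $m \in \mathbb{Z}$ is automatic in the two-sided case), we have $a_m \mid a_{m-1} + a_{m+1}$. Because all three terms are strictly positive real numbers, the quotient
\[
k := \frac{a_{m-1} + a_{m+1}}{a_m}
\]
is a positive integer, hence $k \geq 1$.

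Next I would use the local maximum hypothesis $a_{m-1} < a_m$ and $a_{m+1} < a_m$ to obtain the upper bound
\[
a_{m-1} + a_{m+1} < 2 a_m,
\]
which forces $k < 2$. Combined with $k \geq 1$ and the integrality of $k$, the only possibility is $k = 1$, giving $a_{m-1} + a_{m+1} = a_m$, as desired.

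The argument is so short that there is no serious obstacle; the only thing to be careful about is confirming that the divisibility relation $a_m \mid a_{m-1} + a_{m+1}$ is actually part of the hypothesis at the index $m$ in each of the four sequence types. In the finite non-cyclic case this requires $m \in [2, n-1]$, which is exactly what is assumed; in the cyclic case the wrap-around clauses $a_1 \mid a_2 + a_n$ and $a_n \mid a_{n-1} + a_1$ handle the endpoints, so again $m$ is covered for every $m \in [1,n]$; and in the infinite cases the assumption applies verbatim. No case analysis beyond this verification is needed, and the conclusion $a_m = a_{m-1} + a_{m+1}$ follows uniformly.
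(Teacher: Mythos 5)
Your proof is correct and is exactly the standard argument: the divisibility relation forces $(a_{m-1}+a_{m+1})/a_m$ to be a positive integer, and the strict local-maximum hypothesis caps it below $2$, so it equals $1$. The paper in fact omits an explicit proof of this lemma (it is used implicitly, and the same $1 \le k < 2$ squeeze reappears inside the proof of Lemma~\ref{80} under the weaker hypothesis $a_m \ge a_{m\pm1}$), so your argument fills that gap in the natural way.
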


\begin{llll}    \label{80}
Let $(a_i)_{i=1}^n$ be a (resp. cyclic) generalized arithmetic sequence, and let $a_m$
be a largest term among them (i.e., $a_m\geq a_i$ for all $i\in[1,n]$), where (resp. $m\in[1,n]$) $m\in[2,n-1]$. If there exists $j$ such that $a_m>a_j$, then $a_m$ is a local maximum.

This also holds for $(a_i)_{i=1}^{+\infty}$ when $m>1$ or $(a_i)_{i=-\infty}^{+\infty}$ when $m\in\mathbb{Z}$.

\end{llll}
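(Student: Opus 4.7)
The plan is to argue by contradiction: suppose $a_m$ is a largest term with some $a_j < a_m$, yet $a_m$ is not a strict local maximum. Writing $M := a_m$, maximality gives $a_{m-1}, a_{m+1} \leq M$, and failure of strict local maximality forces equality in at least one of these; by symmetry I will assume $a_{m+1} = M$.

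The engine of the argument will be a single one-step propagation. Whenever $a_i = a_{i+1} = M$ and the divisibility $a_{i+1} \mid a_i + a_{i+2}$ is available, I get $M \mid M + a_{i+2}$, hence $M \mid a_{i+2}$; combined with $0 < a_{i+2} \leq M$ this forces $a_{i+2} = M$. The mirror step, using divisibility at index $i$, yields $a_{i-1} = M$. Iterating this propagation spreads equality with $M$ throughout the whole sequence: in the cyclic case I simply walk around the cycle (using the wrap-around divisibilities $a_1 \mid a_2 + a_n$ and $a_n \mid a_{n-1} + a_1$ for $n \geq 3$, while $n \leq 2$ makes the hypothesis vacuous); in the bi-infinite and one-sided infinite cases propagation continues indefinitely, with the hypothesis $m>1$ ensuring backward propagation all the way down to $a_1$ in the one-sided case; in the finite non-cyclic case propagation first fills all interior indices and then divisibility at the interior indices $2$ and $n-1$ transfers equality to the endpoints $a_1$ and $a_n$. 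In every variant I conclude that every term equals $M$, contradicting the existence of $a_j < a_m$.

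The only delicate point is that the divisibility condition is not postulated at the endpoints of a finite non-cyclic sequence, so reaching $a_1$ and $a_n$ must pass through their interior neighbors; the observation that $a_2 \mid a_1 + a_3$ with $a_2 = a_3 = M$ forces $a_1 = M$, and the symmetric fact at index $n-1$, handle this. Beyond this small boundary check, the argument is a straightforward induction on distance from $m$, and I do not anticipate any real obstacle; the role of Lemma~\ref{80} is simply to upgrade the previous local-maximum lemma from ``strict local max'' to ``non-degenerate global max,'' and the propagation makes this upgrade essentially automatic.
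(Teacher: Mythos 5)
Your proposal is correct and uses essentially the same mechanism as the paper's proof. The paper also argues that if $a_{m}$ is a global max but not a strict local max, then $a_{m-1}=a_m=a_{m+1}$ forces equality to propagate along the sequence; it then obtains a contradiction by locating the smallest $t>m+1$ with $a_t<a_m$ and observing that $a_{t-1}<a_{t-2}+a_t<2a_{t-1}$ violates the divisibility there, whereas you iterate the propagation step explicitly until it meets the strictly smaller term $a_j$. These are the same argument, phrased as ``first failure'' versus ``iterated spreading.'' One small inaccuracy in your write-up: for a cyclic sequence with $n=2$ the hypothesis of the lemma (a global max strictly exceeding some $a_j$) is \emph{not} automatically vacuous --- e.g. $(1,2)$ is a valid cyclic generalized arithmetic sequence --- but your propagation still closes the case immediately (equality of $a_m$ with its neighbor $a_{3-m}$ forces all terms equal, contradicting $a_j<a_m$), so this is a phrasing slip rather than a gap.
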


\begin{proof}[proof]
Without loss of generality, we assume that $j>m$. Since $a_m\geq a_{m-1},a_{m+1}$ while $a_m\mid a_{m-1}+a_{m+1}$, we know that $a_m\leq a_{m-1}+a_{m+1}\leq 2a_m$. If $a_{m-1}+a_{m+1}=2a_m$, then $a_{m-1}=a_m=a_{m+1}$. Since there exists $a_j<a_m$, we can pick the smallest $t>m+1$ such that $a_t<a_m$. Then $a_{t-1}<a_{t-2}+a_t<2a_{t-1}$, which contradicts the fact that $a_{t-1}\mid a_{t-2}+a_t$. Thus, we have $a_{m-1}+a_{m+1}=a_m$.

\end{proof}

\begin{llll}     \label{59}    (Closure of the space of generalized arithmetic sequences)

Let $(a_i)_{i=1}^n$ be a (resp. cyclic) generalized arithmetic sequence, and let $a_m$ be a local maximum. If (resp. $n\geq2$) $n\geq3$, then after eliminating $a_m$ from the sequence, the new sequence
$$(a_1,a_2,\cdots,a_{m-1},a_{m+1},\cdots,a_n)$$
is still a (resp. cyclic) generalized arithmetic sequence.

This also holds for $(a_i)_{i=1}^{+\infty}$ when $m>1$ or $(a_i)_{i=-\infty}^{+\infty}$ when $m\in\mathbb{Z}$.

\end{llll}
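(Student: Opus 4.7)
The plan is to check the divisibility condition $a_i \mid a_{i-1} + a_{i+1}$ at every index of the new (shorter) sequence. All divisibility relations not involving the index $m$ are inherited verbatim from the original sequence, so only the two ``newly adjacent'' relations created by deleting $a_m$ need attention, namely the ones at the (old) positions $m-1$ and $m+1$.

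First I would invoke the preceding Local maximum lemma, which gives the identity
\[
a_m \;=\; a_{m-1} + a_{m+1}.
\]
Using this, the verification reduces to a one-line substitution: since $a_{m-1} \mid a_{m-2}+a_m$ in the original sequence, and $a_m = a_{m-1} + a_{m+1}$, we get $a_{m-1} \mid (a_{m-2}+a_{m-1}+a_{m+1})$, hence $a_{m-1} \mid a_{m-2} + a_{m+1}$, which is exactly the new condition at $a_{m-1}$. The symmetric argument using $a_{m+1} \mid a_m + a_{m+2}$ yields $a_{m+1} \mid a_{m-1} + a_{m+2}$, the new condition at $a_{m+1}$.

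Next I would handle the cyclic case. When $n \geq 3$, the removal produces a sequence of length $n-1 \geq 2$, and the ``neighbors'' of the gap are read cyclically (using the convention $a_0 = a_n$, $a_{n+1}=a_1$). The same substitution works verbatim when $m \in \{1,n\}$ once the cyclic convention is in force; in the boundary case $n-1 = 2$, one must additionally rewrite the conclusion as $a_{m-1} \mid a_{m+1}+a_{m+1}$ and $a_{m+1} \mid a_{m-1}+a_{m-1}$, which follow from the two divisibilities just derived since the two neighbors of the remaining pair coincide. Finally, the semi-infinite and bi-infinite cases are identical: only positions $m-1$ and $m+1$ need rechecking, and the argument is local.

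I do not anticipate a real obstacle here; the lemma is essentially a corollary of the Local maximum identity, and the only care required is bookkeeping at the endpoints in the cyclic setting with small $n$. The one subtlety worth flagging in the write-up is why the hypothesis ``$n \geq 3$'' (resp.\ $n \geq 2$ cyclically) is necessary: removal must leave enough terms for the divisibility condition to be nontrivially stated, and the cyclic $n=2$ case would collapse the two new neighbors into a single index.
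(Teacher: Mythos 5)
Your argument is correct and follows the same route as the paper's proof: both use the local-maximum identity $a_m=a_{m-1}+a_{m+1}$ and then a one-line substitution to obtain $a_{m-1}\mid a_{m-2}+a_{m+1}$ and $a_{m+1}\mid a_{m-1}+a_{m+2}$, the only two divisibilities that change. The paper is terser about the boundary indices (and notes the vacuous case $n=3$ by restricting to $n\geq4$), but the substance is identical.
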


\begin{proof}[proof]
When $(a_i)_{i=1}^n$ is a cyclic generalized arithmetic sequence, since $a_m=a_{m-1}+a_{m+1}$, we have $a_{m-1}+a_{m+2}=a_m-a_{m+1}+a_{m+2}$. Since $a_{m+1}\mid a_m+a_{m+2}$, it follows that $a_{m+1}\mid a_{m-1}+a_{m+2}$. By symmetry, we also know that $a_{m-1}\mid a_{m-2}+a_{m+1}$.

When $(a_i)_{i=1}^n$ is a generalized arithmetic sequence, we only need to prove the statement when $n\geq4$. If $m\leq n-2$, by the above, we know that $a_{m+1}\mid a_{m-1}+a_{m+2}$. If $m\geq 3$, by the above, we know that $a_{m-1}\mid a_{m-2}+a_{m+1}$.

The proof for the infinity sequence case is similar .

\end{proof}

Next, we want to give the definition of the itinerary sequence, which is inspired by the itinerary of the BCZ orbit.

\begin{ddd} \label{772}
For a generalized arithmetic sequence $(a_i)_{i=1}^n$(resp. $(a_i)_{i=1}^{+\infty}$, $(a_i)_{i=-\infty}^{+\infty}$, and for a cyclic generalized arithmetic sequence $(a_i)_{i=1}^n$), let its itinerary sequence be $(k_i)_{i=2}^{n-1}$(resp. $(k_i)_{i=2}^{+\infty}$, $(k_i)_{i=-\infty}^{+\infty}$ and $(k_i)_{i=1}^n$) where
$$k_i:=\frac{a_{i-1}+a_{i+1}}{a_i}. $$

\end{ddd}

Next, we will define two functions $h$ and $\hat{h}$. $h$ is essential to our proofs in the following sections, the motivation for its definition can be found in Lemma \ref{72}. We will use $\hat{h}$ to prove an important result (Theorem \ref{73}). We will also demonstrate the identity property, which means that if we eliminate a local maximum from a (resp. cyclic) generalized arithmetic sequence, not only will it still be a (resp. cyclic) generalized arithmetic sequence, but also the value of $h$ (resp. $\hat{h}$) at the sequence would remain the same.

\begin{ddd}    \label{61}
$(a_i)_{i=1}^n$ is a sequence where $n\geq4$. We define $h$ as
$$h(a_1,a_2,\cdots,a_n):=\sum_{i=2}^{n-2}\left(\frac{\frac{a_i+a_{i+2}}{a_{i+1}}+\frac{a_{i-1}+a_{i+1}}{a_i}}{2}-3\right). $$

\end{ddd}

The function $h$ applied to a sequence of length $n$ is the sum of $h$ applied to all $n-3$ consecutive 4-tuples of this sequence.

When $(a_i)_{i=1}^n$ is a generalized arithmetic sequence, we have
$$h(a_1,a_2,\cdots,a_n)=\sum_{i=2}^{n-2}\left(\frac{k_i+k_{i+1}}{2}-3\right)=\sum_{i=2}^{n-1}(k_i-3)-\left(\frac{k_2+k_{n-1}}{2}-3\right), $$
which means that the value of $h$ at a generalized arithmetic sequence can be determined by its itinerary sequence.

\begin{ddd}
$(a_i)_{i=1}^n$ is a sequence where $n\geq1$. Let $a_{i+n}=a_i$ for $i\in\mathbb{Z}$. We define $\hat{h}$ as
$$\hat{h}(a_1,a_2,\cdots,a_n):=\sum_{i=1}^n\left(\frac{\frac{a_i+a_{i+2}}{a_{i+1}}+\frac{a_{i-1}+a_{i+1}}{a_i}}{2}-3\right).  $$
\end{ddd}

The function $\hat{h}$ applied to a sequence of length $n$ is the sum of $\hat{h}$ applied to all $n$ consecutive 4-tuples of this sequence if we consider this sequence to be cyclic.

When $(a_i)_{i=1}^n$ is a cyclic generalized arithmetic sequence, we have
$$h(a_1,a_2,\cdots,a_n)=\sum_{i=1}^n\left(\frac{k_i+k_{i+1}}{2}-3\right)=\sum_{i=1}^n(k_i-3), $$
which means that the value of $\hat{h}$ at a cyclic generalized arithmetic sequence can also be determined by its itinerary sequence.

\begin{ttt}   \label{76}     (Identity of $\hat{h}$)

$(a_i)_{i=1}^n$ is a cyclic generalized arithmetic sequence where $n\geq2$. $a_m$ is a local maximum. We have
$$\hat{h}(a_1,a_2,\cdots,a_{m-1},a_{m+1},\cdots,a_n)=\hat{h}(a_1,a_2,\cdots,a_n). $$

\end{ttt}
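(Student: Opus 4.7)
\textbf{Proof plan for Theorem \ref{76}.}

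The plan is to exploit the fact that, for a cyclic generalized arithmetic sequence, $\hat{h}$ collapses to the telescoping expression $\sum_{i=1}^n(k_i-3)$ in the itinerary sequence $k_i=(a_{i-1}+a_{i+1})/a_i$, as noted immediately after the definition of $\hat{h}$. So it suffices to track how the itinerary sequence changes when the local maximum $a_m$ is deleted, and to check that $\sum(k_i-3)$ is preserved.

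First, I would invoke the Local Maximum Lemma preceding this theorem to conclude that $a_m = a_{m-1}+a_{m+1}$, equivalently $k_m=1$. Then I would write the itinerary of the reduced sequence $(a_1,\ldots,a_{m-1},a_{m+1},\ldots,a_n)$ explicitly: by cyclic reindexing, every itinerary value $k'_j$ with $j\notin\{m-1,m\}$ (in the new indexing) coincides with the corresponding $k_i$ in the old sequence, so only the two new values $k'_{m-1}$ and $k'_m$ are at stake, replacing the three old values $k_{m-1}, k_m, k_{m+1}$. Using $a_m=a_{m-1}+a_{m+1}$, I would compute
$$
k'_{m-1}=\frac{a_{m-2}+a_{m+1}}{a_{m-1}}=\frac{a_{m-2}+a_m}{a_{m-1}}-\frac{a_m-a_{m+1}}{a_{m-1}}=k_{m-1}-1,
$$
and symmetrically $k'_m = k_{m+1}-1$. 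The bookkeeping then gives
$$
(k'_{m-1}-3)+(k'_m-3) - \bigl[(k_{m-1}-3)+(k_m-3)+(k_{m+1}-3)\bigr] = -1-1-(k_m-3) = -2-(1-3)=0,
$$
so that $\hat{h}$ is unchanged, as desired.

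The one subtlety is the boundary case $n=2$, where the reduced sequence has length $1$ and the cyclic convention $a_{i+n}=a_i$ forces all cyclic neighbors to equal $a_2$. Here the divisibility condition forces $a_1=2a_2$ (with equality $a_1=a_{m-1}+a_{m+1}=2a_2$ matching the Local Maximum Lemma), and a direct substitution gives both $\hat{h}(a_1,a_2)$ and $\hat{h}(a_2)$ equal to $-1$. The main ``obstacle'' is really only to be careful with the cyclic indexing and with tracking exactly which itinerary entries get merged under deletion; once the identification $k'_{m-1}=k_{m-1}-1$, $k'_m=k_{m+1}-1$ is in hand, the identity is immediate from $k_m=1$.
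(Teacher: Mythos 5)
Your proposal is correct, and it takes a genuinely different (and cleaner) route than the paper. The paper's own proof, for $n\geq4$, writes out all seven affected $4$-tuple summands from the definition of $\hat{h}$ and simplifies the resulting algebraic expression directly to $\frac{a_m-a_{m+1}}{a_{m-1}}+\frac{a_m-a_{m-1}}{a_{m+1}}+\frac{a_{m-1}+a_{m+1}}{a_m}-3=1+1+1-3=0$, and then separately checks $n=2$ and $n=3$ by hand. You instead invoke the telescoping identity $\hat{h}=\sum_{i=1}^n(k_i-3)$ stated right after the definition, observe that deleting the local maximum $a_m$ deletes the itinerary entry $k_m=1$ and shifts $k_{m-1}\mapsto k_{m-1}-1$, $k_{m+1}\mapsto k_{m+1}-1$ while leaving every other $k_i$ intact, and read the cancellation off immediately. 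Your approach localizes the change to the itinerary level and makes it transparent \emph{why} the sum is invariant; the paper's is a more mechanical verification. One small remark: you flag $n=2$ as the boundary case that needs separate treatment (because the reduced sequence is constant, so the formula $k'_{m-1}=k_{m-1}-1$ no longer applies), which is correct; the other potential edge case $n=3$, where the three indices $m-1,m,m+1$ already exhaust the cyclic range and the reduced sequence has only two itinerary entries, is in fact covered by your general bookkeeping (both new entries are of the "shifted" type and $k_m$ disappears), so no extra case analysis is required there. The paper, by contrast, treats $n=3$ as a separate explicit case, which your reformulation renders unnecessary.
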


\begin{ttt}     \label{60}    (Identity of $h$)

Let $(a_i)_{i=1}^n$ be a generalized arithmetic sequence where $n\geq7$, and let $a_m$ be a local maximum where $m\in[4,n-3]$. We have
$$h(a_1,a_2,\cdots,a_{m-1},a_{m+1},\cdots,a_n)=h(a_1,a_2,\cdots,a_n). $$

\end{ttt}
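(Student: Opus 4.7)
The plan is to exploit the itinerary reformulation stated immediately after Definition \ref{61}: for a generalized arithmetic sequence one has
$$h(a_1,\ldots,a_n) \;=\; \sum_{i=2}^{n-1}(k_i-3) \;-\; \left(\tfrac{k_2+k_{n-1}}{2}-3\right),$$
so $h$ is determined by the itinerary $(k_i)_{i=2}^{n-1}$ together with the two boundary correction values $k_2$ and $k_{n-1}$. I would compare the itinerary of $(a_1,\ldots,a_n)$ with that of the reduced sequence $(a_1,\ldots,a_{m-1},a_{m+1},\ldots,a_n)$, denote the latter by $(k'_i)$, and show that the two expressions for $h$ agree term by term after a simple index shift.

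Next, I would apply the Local Maximum lemma to substitute $a_m=a_{m-1}+a_{m+1}$ wherever needed. Among the original itinerary terms, $a_m$ appears only in $k_{m-1},k_m,k_{m+1}$, and direct computation yields $k_m=1$ together with
$$k_{m-1} \;=\; 1 + \frac{a_{m-2}+a_{m+1}}{a_{m-1}}, \qquad k_{m+1} \;=\; 1 + \frac{a_{m-1}+a_{m+2}}{a_{m+1}}.$$
For the reduced sequence, the values $k'_i$ coincide with $k_i$ for $i\le m-2$ and with $k_{i+1}$ for $i\ge m+1$ (a pure index shift), while the two new middle entries are exactly
$$k'_{m-1} \;=\; \frac{a_{m-2}+a_{m+1}}{a_{m-1}} \;=\; k_{m-1}-1, \qquad k'_{m} \;=\; \frac{a_{m-1}+a_{m+2}}{a_{m+1}} \;=\; k_{m+1}-1.$$

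Finally, the hypothesis $m\in[4,n-3]$ (with $n\ge 7$) keeps $m$ at distance at least three from each endpoint, so the boundary values $k_2$ and $k_{n-1}$ lie in the unaffected range; hence $k'_2=k_2$ and $k'_{n-2}=k_{n-1}$ and the boundary correction is identical for both sequences. Subtracting the two itinerary-form expressions for $h$ and using the three identities above collapses everything to the localized difference
$$h(a_1,\ldots,a_n) - h(a_1,\ldots,a_{m-1},a_{m+1},\ldots,a_n) \;=\; (k_{m-1}+k_m+k_{m+1}) - (k'_{m-1}+k'_m) - 3 \;=\; 0.$$
The only delicate point is the index bookkeeping: one must verify that after deletion exactly three itinerary terms $k_{m-1},k_m,k_{m+1}$ collapse to two terms $k'_{m-1},k'_m$ while every other term merely shifts by one, and that the boundary corrections stay put. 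That is precisely why $m$ is required to be separated from each endpoint by at least three positions; apart from this accounting there is no substantive obstacle.
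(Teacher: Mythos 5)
Your proposal is correct, and it yields exactly the claimed cancellation. The computation is the same in substance as the paper's --- the paper's proof of Theorem~\ref{60} is declared to be ``almost the same as the proof of Theorem~\ref{76},'' and the proof of Theorem~\ref{76} writes out the difference of the two $\hat h$-values directly from the defining sum over consecutive $4$-tuples, simplifies the resulting telescoping mess, and then substitutes $a_m=a_{m-1}+a_{m+1}$ to see that it vanishes. You instead pass first to the itinerary reformulation
$$h=\sum_{i=2}^{n-1}(k_i-3)-\Bigl(\tfrac{k_2+k_{n-1}}{2}-3\Bigr),$$
so all the $4$-tuple bookkeeping is replaced by a comparison of itinerary terms: $k_m=1$, $k'_{m-1}=k_{m-1}-1$, $k'_m=k_{m+1}-1$, everything else a shift, and the boundary terms $k_2,k_{n-1}$ unaffected precisely because $m\in[4,n-3]$. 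This buys you a cleaner, more localized argument and makes visible why the hypothesis $m\in[4,n-3]$ (hence $n\ge7$) is exactly what is needed. The one thing you should state explicitly is that the reduced sequence is again a generalized arithmetic sequence (Lemma~\ref{59}), since you invoke the itinerary reformulation for both sequences and that reformulation is valid only for generalized arithmetic sequences.
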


Now, we have already shown the closure and identity. We now explain their application. As we mentioned before, the x-coordinates or the y-coordinates of the BCZ orbit are not only a generalized arithmetic sequence but also a BCZ sequence. By eliminating every local maximum, we get a new generalized arithmetic sequence. However, the value of $h$ at the sequence would remain the same. So after the process of elimination:
$$\text{BCZ sequence}\longrightarrow \text{generalized arithmetic sequence}\longrightarrow \cdots$$$$\longrightarrow \text{generalized arithmetic sequence in the simplest form}.$$
The value of $h$ at the original BCZ sequence is actually equal to $h$ at the generalized arithmetic sequence in the simplest form. Oftentimes, the latter is easy to calculate while the former is not. So, by this process, we can compute $h$ at a long, complicated generalized arithmetic sequence by calculating the $h$ at the generalized arithmetic sequence derived from the original, complicated one by eliminating local maximum until it can't. This same method can also be applied to the cyclic generalized arithmetic sequence and $\hat{h}$. We will use the method in the proof of Theorem \ref{73} and Theorem \ref{66}.

Now, after presenting the properties of the (resp. cyclic) generalized arithmetic sequence, we show some examples of the (resp. cyclic) generalized arithmetic sequence.

\begin{rrr}    \label{62}     (Examples of (resp. cyclic) generalized arithmetic sequence)

1. $T$ is the BCZ map, $(a,b)$ is a point of the Farey triangle. $(a_i, b_i)=T^i(a,b)$ for $i\in\mathbb{Z}$. Then for any $c\in\mathbb{R}^+, m\in\mathbb{Z}, n\in\mathbb{N}$, $(ca_i)_{i=m}^{m+n}$ is a generalized arithmetic sequence.

The reason is that for $i\in[m+1,m+n-1]$, by the definition of the BCZ map,
$$a_{i+1}=b_i=k(a_{i-1},b_{i-1})b_{i-1}-a_{i-1}=k(a_{i-1},b_{i-1})a_i-a_{i-1}, $$
thus
$$a_i\mid a_{i-1}+a_{i+1}, $$
$$ca_i\mid ca_{i-1}+ca_{i+1}. $$

2. $(a,b)$ is a rational point of the Farey triangle which means that $\frac{b}{a}$ is a rational number. Let $\frac{b}{a}=\frac{p}{q}$ where $(p,q)=1$, then let $l=\frac{q}{a}=\frac{p}{b}$, so $(a,b)=(\frac{q}{l},\frac{p}{l})$. Let $n=[l]$, $A_n$ be the length of the Farey sequence of order $n$. By Lemma \ref{52}, we know that $\forall m\in\mathbb{Z}$,
$$T^m\left(\frac{1}{n},1\right)=\left(\frac{q_m}{n},\frac{q_{m+1}}{n}\right), $$
where $(\frac{p_i}{q_i})_{i=0}^{A_n}$ is the Farey sequence of order $n$ and we let $\frac{p_i}{q_i}=\frac{p_{i+A_n}}{q_{i+A_n}}$ for $\forall i\in\mathbb{Z}$.
Then, we can deduce that $\forall m\in\mathbb{Z}$,
$$T^m\left(\frac{1}{l},\frac{n}{l}\right)=\left(\frac{q_m}{l},\frac{q_{m+1}}{l}\right).$$
Since $(q,p)=1$, $p+q>l\geq n$, thus $(q,p)$ are the denominators of neighboring terms of the Farey sequence of order $n$, we know there exists a unique $k\in[0,A_n-1]$ such that $(q,p)=(q_k,q_{k+1})$. Therefore,
$$(a_m,b_m)=T^m(a,b)=T^m\left(\frac{q_k}{l},\frac{q_{k+1}}{l}\right)=\left(\frac{q_{k+m}}{l},\frac{q_{k+m+1}}{l}\right).$$
Then we know that for any $m\in\mathbb{Z}$,
$$T^{A_n}(a_m,b_m)=(a_m,b_m).  $$
So for any $c\in\mathbb{R}^+$, $m\in\mathbb{Z}$, $j\in\mathbb{Z}^+$, $(ca_i)_{i=m}^{m+jA_n-1}$ is a cyclic generalized arithmetic sequence.

The reason is the same as we mentioned above in 1. We have $ca_i\mid ca_{i-1}+ca_{i+1}$. Since $a_m=a_{m+jA_n}$, $a_{m+jA_n-1}=a_{m-1}$, so we also have
$ca_m\mid ca_{m+jA_n-1}+ca_{m+1}$, $ca_{m+jA_n-1}\mid ca_{m+jA_n-2}+ca_m$.\\

3. Let $[b_0;b_1,b_2,\cdots,b_n]_{-}$ be the negative continued fraction, which means that
$$[b_0;b_1,b_2,\cdots,b_n]_{-}=b_0-\frac{1}{b_1-\frac{1}{\ddots-\frac{1}{b_n}}},  $$
where $b_i\geq 2$ for $i>0$.

For an irrational number $\alpha>0$, let $\alpha=[b_0;b_1,b_2,\cdots]_{-}$, $\frac{r_n}{s_n}=[b_0;b_1,b_2,\cdots,b_n]_{-}$ where $(r_n,s_n)=1$, $r_n,s_n\in\mathbb{Z}^+$ for $n\geq0$. Then we have
$$r_{n+2}=b_{n+2}r_{n+1}-r_n, $$
$$s_{n+2}=b_{n+2}s_{n+1}-s_n,  $$
for $n\geq0$, which means that $(r_n)_{n=0}^{+\infty}$ and $(s_n)_{n=0}^{+\infty}$ are both generalized arithmetic sequences.

The itinerary sequence of these two generalized arithmetic sequences is $(b_i)_{i=2}^{+\infty}$. We have the following correspondence between $(b_i)_{i=0}^{+\infty}$ and
$(a_i)_{i=0}^{+\infty}$ where $[a_0;a_1,a_2,\cdots]$ is the continued fraction of $\alpha$. Let $c_i=\sum_{j=1}^ia_{2j-1}$, we have
$$b_0=a_0+1;  $$
$$b_{c_i}=a_{2i}+2,  $$
for $i\geq1$;
$$b_k=2, $$
for $k\neq c_i$.

Let $\frac{p_n}{q_n}=[a_0;a_1,a_2,\cdots,a_n]$, we know that $\frac{p_n}{q_n}$ is a best approximation of $\alpha$. Meanwhile, we can deduce that $\frac{r_n}{s_n}$ is also a best approximation of $\alpha$. And we also have the correspondence between $\frac{p_n}{q_n}$ and $\frac{r_n}{s_n}$:
$$\frac{p_{2i-1}}{q_{2i-1}}=\frac{r_{c_i-1}}{s_{c_i-1}}.  $$

\end{rrr}

\begin{proof}[proof](of Theorem \ref{76})

For $n=2$, assume that $m=2$, then $a_2=2a_1$, so
$$\hat{h}(a_1,a_2)=-1=\hat{h}(a_1).$$

For $n=3$, assume that $m=3$, then $a_1+a_2=a_3$, assume that $a_1\leq a_2$, then $a_3\leq 2a_2$, $a_2<a_1+a_3\leq3a_2$.

When $a_1+a_3=2a_2$, we have $a_2=2a_1$, $a_3=3a_1$, so
$$\hat{h}(a_1,a_2,a_3)=-1=\hat{h}(a_1,a_2).$$
When $a_1+a_3=3a_2$, we have $a_2=a_1$, $a_3=2a_1$, so
$$\hat{h}(a_1,a_2,a_3)=-2=\hat{h}(a_1,a_2).$$

For $n\geq4$,
\begin{align*}
&\hat{h}(a_1,a_2,\cdots,a_n)-\hat{h}(a_1,a_2,\cdots,a_{m-1},a_{m+1},\cdots,a_n)\\
=&\left(\frac{\frac{a_{m-2}+a_{m}}{a_{m-1}}+\frac{a_{m-3}+a_{m-1}}{a_{m-2}}}{2}-3\right)+
\left(\frac{\frac{a_{m-1}+a_{m+1}}{a_{m}}+\frac{a_{m-2}+a_{m}}{a_{m-1}}}{2}-3\right)\\
&+\left(\frac{\frac{a_{m}+a_{m+2}}{a_{m+1}}+\frac{a_{m-1}+a_{m+1}}{a_{m}}}{2}-3\right)+
\left(\frac{\frac{a_{m+1}+a_{m+3}}{a_{m+2}}+\frac{a_{m}+a_{m+2}}{a_{m+1}}}{2}-3\right)\\
&-\left(\frac{\frac{a_{m-2}+a_{m+1}}{a_{m-1}}+\frac{a_{m-3}+a_{m-1}}{a_{m-2}}}{2}-3\right)-
\left(\frac{\frac{a_{m-1}+a_{m+2}}{a_{m+1}}+\frac{a_{m-2}+a_{m+1}}{a_{m-1}}}{2}-3\right)\\
&-\left(\frac{\frac{a_{m-1}+a_{m+2}}{a_{m+1}}+\frac{a_{m+1}+a_{m+3}}{a_{m+2}}}{2}-3\right)\\
=&\frac{a_m-a_{m+1}}{a_{m-1}}+\frac{a_m-a_{m-1}}{a_{m+1}}+\frac{a_{m-1}+a_{m+1}}{a_{m}}-3.
\end{align*}

Since $a_m=a_{m-1}+a_{m+1}$, we have $\frac{a_{m-1}+a_{m+1}}{a_{m}}=1$, $\frac{a_m-a_{m+1}}{a_{m-1}}=1$, $\frac{a_m-a_{m-1}}{a_{m+1}}=1$. Therefore,
$$\hat{h}(a_1,a_2,\cdots,a_n)-\hat{h}(a_1,a_2,\cdots,a_{m-1},a_{m+1},\cdots,a_n)=0.$$

\end{proof}

\begin{proof}[proof](of Theorem \ref{60})

The proof is almost the same as the proof of Theorem \ref{76}.

\end{proof}

\section{Discrete approximation}
\label{95}
In this section, we will show the basic property of $\hat{k}-3$ and explain why it is reasonable to replace $R-\frac{n^2}{A_n}$ with its discrete approximation $\hat{k}-3$ in (\ref{41}).

For the first return time function $R$, we have
$$\frac{1}{|\Omega|}\int_{\Omega}R dm=\frac{\pi^2}{3},$$
where $dm=2dadb$ is the Lebesgue probability measure in the Farey triangle. So the average of $R$ in the Farey triangle is $\frac{\pi^2}{3}$. As we know from \S\ref{96}, for $\iota'_i=R(T^{i-1}(\frac{1}{n},1))-\frac{n^2}{A_n}$, $\frac{n^2}{A_n}$ acts as an average term which makes $\iota_{A_n}$ go back to zero. When $n$ is very large, the average term is approximate to $\frac{\pi^2}{3}$, which is the average of $R$.

For $\hat{k}$, we want to make a similar statement as (\ref{41}) which will serve as a tool to approximate (\ref{41}). In order to do so, the plan is to construct a sequence to approximate $(\iota_1,\iota_2,\cdots,\iota_{A_n})$.

The most obvious sequence to construct is $(\theta_1,\theta_2,\cdots,\theta_{A_n})$ where $\theta_i=\sum\limits_{j=1}^i\theta'_j$ and $\theta'_i= \hat{k}\left(T^{i-1}\left(\frac{1}{n},1\right)\right)-\lambda$ for some $\lambda\in\mathbb{R}$.

Since
$$\frac{1}{|\Omega|}\int_{\Omega}\hat{k} dm=3,$$
we are going to study the property of $\hat{k}-3$ first.

Firstly, we will calculate $\sum\limits_{i=1}^{A_n}\left(\hat{k}\left(T^{i-1}\left(\frac{1}{n},1\right)\right)-3\right)$.

Let $\rho_i=\frac{p_i}{q_i}$ be the terms of the Farey sequence of order $n$, $q_{i+A_n}=q_i$ for $i\in\mathbb{Z}$. Then by Lemma \ref{52}, we have $T^i\left(1,\frac{1}{n}\right)=\left(\frac{q_{i-1}}{n},\frac{q_i}{n}\right)$ for $i\in\mathbb{Z}$.

\begin{llll}   \label{72}
$$\hat{k}\left(T^i\left(\frac{1}{n},1\right)\right)=\frac{\frac{q_i+q_{i+2}}{q_{i+1}}+\frac{q_{i-1}+q_{i+1}}{q_i}}{2}.$$

\end{llll}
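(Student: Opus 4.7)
The plan is to unpack $\hat{k}(T^i(1/n,1))$ into its two pieces $k$ and $k^T$ and evaluate each at the orbit point using Lemma~\ref{52}. By that lemma, $T^j(1/n,1)=(q_j/n,q_{j+1}/n)$ for every $j\in\mathbb{Z}$; in particular $T^i(1/n,1)=(q_i/n,q_{i+1}/n)$.

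For the first piece, the BCZ formula $T(a,b)=(b,\,k(a,b)b-a)$ applied to $(q_i/n,q_{i+1}/n)$ gives
$$T\left(\frac{q_i}{n},\frac{q_{i+1}}{n}\right)=\left(\frac{q_{i+1}}{n},\;k\left(\frac{q_i}{n},\frac{q_{i+1}}{n}\right)\frac{q_{i+1}}{n}-\frac{q_i}{n}\right).$$
Matching second coordinates with $T^{i+1}(1/n,1)=(q_{i+1}/n,q_{i+2}/n)$ and solving for the itinerary yields
$$k\left(T^i\left(\tfrac{1}{n},1\right)\right)=\frac{q_i+q_{i+2}}{q_{i+1}}.$$

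For the second piece, $k^T=k\circ T^{-1}$ by definition, and Lemma~\ref{52} applied at index $i-1$ identifies $T^{-1}(q_i/n,q_{i+1}/n)=(q_{i-1}/n,q_i/n)$. Running the previous computation one step earlier gives
$$k^T\left(T^i\left(\tfrac{1}{n},1\right)\right)=k\left(\frac{q_{i-1}}{n},\frac{q_i}{n}\right)=\frac{q_{i-1}+q_{i+1}}{q_i}.$$
Averaging these two expressions according to $\hat{k}=(k+k^T)/2$ produces the desired identity.

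I do not foresee any real obstacle: once Lemma~\ref{52} is in hand, the proof is a direct unwinding of the definitions, with the BCZ map functioning as a two-term linear recursion on the Farey denominators whose multiplier is exactly the index function. The only care required is in the indexing convention; because $q_i$ is extended cyclically with period $A_n$, the formula makes sense for every $i\in\mathbb{Z}$ and in particular handles the wrap-around at $i=0$ and $i=A_n-1$ without issue. The only conceptual point worth pausing on is the consistency $k(T^{-1}(a,b))=k(b,a)$ noted in the definition of $k^T$, which is needed to see that the computation at $(q_{i-1}/n,q_i/n)$ is legitimately the pull-back by $T^{-1}$ of the index at $(q_i/n,q_{i+1}/n)$.
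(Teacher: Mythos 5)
Your proof is correct. You decompose $\hat{k}$ into $k$ and $k^T$ and evaluate each on the orbit point $T^i(1/n,1)=(q_i/n,q_{i+1}/n)$; the evaluation of $k$ is identical to the paper's. The one real difference is in how you treat $k^T$: you invoke the pull-back characterization $k^T=k\circ T^{-1}$ and use Lemma~\ref{52} at index $i-1$ to identify $T^{-1}(q_i/n,q_{i+1}/n)=(q_{i-1}/n,q_i/n)$, then apply the already-established formula for $k$ one step earlier. The paper instead uses the swap characterization $k^T(a,b)=k(b,a)$, rewrites $(q_{i+1}/n,q_i/n)$ as $(q_{A_n-i-1}/n,q_{A_n-i}/n)$ via the Farey-denominator symmetry $q_j=q_{A_n-j}$, and applies the $k$-formula at index $A_n-i-1$. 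Both are one-line finishes, but your route is a bit leaner: it stays entirely within the dynamics (Lemma~\ref{52} and the definition of $k^T$) and never needs the palindromic symmetry of the Farey denominators. Both characterizations of $k^T$ are stated in the paper, so no additional justification is required on your side.
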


\begin{proof}[proof]

Since $T\left(T^i\left(\frac{1}{n},1\right)\right)=\left(\frac{q_{i+1}}{n},\frac{q_{i+2}}{n}\right)$, by the definition of the BCZ map,
$$\frac{q_{i+2}}{n}=k\left(T^i\left(\frac{1}{n},1\right)\right)\cdot\frac{q_{i+1}}{n}-\frac{q_i}{n},$$
thus
$$k\left(T^i\left(\frac{1}{n},1\right)\right)=\frac{\frac{q_i}{n}+\frac{q_{i+2}}{n}}{\frac{q_{i+1}}{n}}=\frac{q_i+q_{i+2}}{q_{i+1}},$$
for $k^T(T^i(\frac{1}{n},1))$, since $q_i=q_{A_n-i}$, we have
$$k^T\left(T^i\left(\frac{1}{n},1\right)\right)=k^T\left(\frac{q_i}{n},\frac{q_{i+1}}{n}\right)=k\left(\frac{q_{i+1}}{n},\frac{q_i}{n}\right)=k\left(\frac{q_{A_n-i-1}}{n},\frac{q_{A_n-i}}{n}\right),$$
$$=\frac{q_{A_n-i-1}+q_{A_n-i+1}}{q_{A_n-i}}=\frac{q_{i-1}+q_{i+1}}{q_i},$$
therefore
$$\hat{k}\left(T^i\left(\frac{1}{n},1\right)\right)=\frac{\frac{q_i+q_{i+2}}{q_{i+1}}+\frac{q_{i-1}+q_{i+1}}{q_i}}{2}.$$

\end{proof}

\begin{ccc}  \label{79}
$$\sum\limits_{i=1}^{A_n}\left(\hat{k}\left(T^{i-1}\left(\frac{1}{n},1\right)\right)-3\right)=\hat{h}(q_1,q_2,\cdots,q_{A_n}).$$
\end{ccc}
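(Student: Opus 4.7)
The plan is to view this corollary as an essentially immediate rewriting: Lemma \ref{72} expresses $\hat{k}(T^i(\frac{1}{n},1))$ as precisely the summand that appears inside $\hat{h}$, so all that remains is to reconcile the index ranges using the periodicity $q_{i+A_n}=q_i$.

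First, I would apply Lemma \ref{72} with $i$ replaced by $i-1$ to rewrite each term on the left-hand side:
$$\hat{k}\left(T^{i-1}\left(\frac{1}{n},1\right)\right) = \frac{\frac{q_{i-1}+q_{i+1}}{q_i} + \frac{q_{i-2}+q_i}{q_{i-1}}}{2},$$
so that the left-hand side of the corollary becomes
$$\sum_{i=1}^{A_n}\left(\frac{\frac{q_{i-1}+q_{i+1}}{q_i} + \frac{q_{i-2}+q_i}{q_{i-1}}}{2} - 3\right).$$

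Next, I would reindex by $j=i-1$ and invoke the periodicity $q_{j+A_n}=q_j$ (established just before Lemma \ref{72}, which matches the cyclic convention $a_{j+n}=a_j$ built into the definition of $\hat{h}$) to shift the summation range from $\{0,1,\ldots,A_n-1\}$ back to $\{1,2,\ldots,A_n\}$ without changing the value of the sum. The result is
$$\sum_{j=1}^{A_n}\left(\frac{\frac{q_j+q_{j+2}}{q_{j+1}} + \frac{q_{j-1}+q_{j+1}}{q_j}}{2} - 3\right),$$
which by inspection coincides with $\hat{h}(q_1,q_2,\ldots,q_{A_n})$.

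There is no substantive obstacle here: the content of the corollary is entirely carried by Lemma \ref{72}. The only thing that requires attention is the matching of the boundary indices $q_{-1},q_0,q_{A_n+1},q_{A_n+2}$ with the cyclic extension used in the definition of $\hat{h}$, but this is immediate from $q_{i+A_n}=q_i$.
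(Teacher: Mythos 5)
Your proposal is correct and is essentially the canonical (and only sensible) derivation: the paper states this as an immediate corollary of Lemma \ref{72} without writing out a proof, and what you give is exactly the substitution $i\mapsto i-1$ in Lemma \ref{72}, followed by the cyclic reindexing $j=i-1$ using the periodicity $q_{j+A_n}=q_j$, which matches the cyclic convention $a_{j+n}=a_j$ built into the definition of $\hat{h}$.
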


\begin{rrr}   \label{78}
As we mentioned before, by Lemma \ref{52}, we have $T^i(\frac{1}{n},1)=(\frac{q_i}{n},\frac{q_{i+1}}{n})$. By Remark \ref{62}, we know that for any $c\in\mathbb{R}^+$, $m\in\mathbb{Z}$, $j\in\mathbb{Z}^+$, $(ca_i)_{i=m}^{m+jA_n-1}$ is a cyclic generalized arithmetic sequence. Let $c=n$, $m=1$, we know that $(q_i)_{i=1}^{A_n}$ is a cyclic generalized arithmetic sequence.
\end{rrr}

Then we have the following theorem.

\begin{ttt}    \label{73}
$$\sum\limits_{i=1}^{A_n}\left(\hat{k}\left(T^{i-1}\left(\frac{1}{n},1\right)\right)-3\right)=-1.$$
\end{ttt}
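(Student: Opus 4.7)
The plan is to reduce the cyclic sequence $(q_1,q_2,\ldots,q_{A_n})$ down to the trivial one-term sequence $(1)$ by repeatedly eliminating local maxima, using the identity of $\hat{h}$ (Theorem \ref{76}) to preserve the value and the closure property (Lemma \ref{59}) to preserve the (cyclic generalized arithmetic sequence) structure; the claim then follows once one verifies $\hat{h}(1)=-1$. By Corollary \ref{79} the sum in question equals $\hat{h}(q_1,\ldots,q_{A_n})$, and by Remark \ref{78} this is a cyclic generalized arithmetic sequence, so the machinery applies.

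The crucial observation is that the entry $1$ appears in $(q_1,\ldots,q_{A_n})$ exactly once. Indeed $q_{A_n}=1$ since $\rho_{A_n}=1/1$, while for $1\le i\le A_n-1$ we have $\rho_i\in(0,1)$ in lowest terms, which forces $q_i\ge2$. This lone entry $1$ is never a local maximum: all other $q_j$ are positive integers, so the inequalities $1>q_{m-1},\,1>q_{m+1}$ cannot hold, and hence $1$ is never eliminated. At each stage, as long as the current cyclic sequence still contains an entry $>1$ it is non-constant, and Lemma \ref{80} guarantees that a largest term is then a strict local maximum; Lemma \ref{59} lets us remove it, and Theorem \ref{76} keeps $\hat{h}$ unchanged.

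Iterating, the length strictly decreases at every step while the unique $1$ persists, so the procedure terminates precisely at the single-element sequence $(1)$. Direct evaluation of the definition with the cyclic extension $a_i\equiv1$ yields
\[
\hat{h}(1)=\frac{(1+1)/1+(1+1)/1}{2}-3=-1,
\]
which combined with the chain of identities above gives $\hat{h}(q_1,\ldots,q_{A_n})=-1$, as claimed.

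The main obstacle is ensuring that the reduction actually terminates at $(1)$ rather than stalling at a constant sequence $(1,1,\ldots,1)$ of length $m\ge2$. This is handled by the "exactly one $1$" observation: since eliminations only delete entries and never introduce new ones, the count of $1$'s is invariant under the whole reduction, so the terminal constant cyclic sequence must contain exactly one entry, namely $(1)$.
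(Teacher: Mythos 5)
Your proof is correct and follows essentially the same approach as the paper: reduce $\hat{h}(q_1,\ldots,q_{A_n})$ by successively eliminating largest terms (local maxima by Lemma~\ref{80}), invoking Theorem~\ref{76} and Lemma~\ref{59} at each step, until only $q_{A_n}=1$ remains. Your explicit remark that the unique entry $1$ is never a local maximum and hence persists, guaranteeing termination at the single-element sequence $(1)$, is a slightly more detailed justification of a step the paper leaves implicit, but the argument is the same.
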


\begin{proof}[proof]
By Corollary \ref{79}, we have
$$\sum\limits_{i=1}^{A_n}\left(\hat{k}\left(T^{i-1}\left(\frac{1}{n},1\right)\right)-3\right)=\hat{h}(q_1,q_2,\cdots,q_{A_n}).$$
By Remark \ref{78}, we know that $(q_i)_{i=1}^{A_n}$ is a cyclic generalized arithmetic sequence. So if we pick a largest term $q_m$ from the sequence, since $q_{A_n}$ is the only term among $(q_i)_{i=1}^{A_n}$ that is equal to 1, it follows that $q_m>1=q_{A_n}$. Therefore, by Lemma \ref{80}, $q_m$ is a local maximum. If we eliminate $q_m$ from the sequence, by Theorem \ref{76}, we know that the value of $\hat{h}$ at the sequence will remain the same. Furthermore, by Lemma \ref{59}, the new sequence will still be cyclic generalized arithmetic sequence. Then we can repeat this process of eliminating a largest term until there is only one term left, which would be $q_{A_n}$.
Therefore, we have
$$\hat{h}(q_1,q_2,\cdots,q_{A_n})=\hat{h}(q_{A_n})=-1,$$
which means that
$$\sum\limits_{i=1}^{A_n}\left(\hat{k}\left(T^{i-1}\left(\frac{1}{n},1\right)\right)-3\right)=-1.$$

\end{proof}

So if we let $\lambda=\lambda_1=3$, $\theta_{A_n}=-1$ which would be a decent approximation to $\iota_{A_n}=0$. Let the corresponding sequence $\{\theta_i|i\in[1,A_n]\}$ be $\{\theta_{i,\lambda_1}|i\in[1,A_n]\}$.

By Theorem \ref{73}, we know that
$$\sum\limits_{i=1}^{A_n}\hat{k}\left(T^{i-1}\left(\frac{1}{n},1\right)\right)=3A_n-1,$$
which means that if we let $\lambda=\lambda_2=\frac{3A_n-1}{A_n}$, $\theta_{A_n}=0$. Let the corresponding sequence $\{\theta_i|i\in[1,A_n]\}$ be $\{\theta_{i,\lambda_2}|i\in[1,A_n]\}$.

But it would not change the value of the sequence that much because
$$\theta_{i,\lambda_2}-\theta_{i,\lambda_1}=\frac{i}{A_n},$$
$$\left|\sum\limits_{i=1}^{A_n}\left|\theta_{i,\lambda_2}\right|-\sum\limits_{i=1}^{A_n}\left|\theta_{i,\lambda_1}\right|\right|
\leq\sum\limits_{i=1}^{A_n}|\theta_{i,\lambda_2}-\theta_{i,\lambda_1}|=\sum\limits_{i=1}^{A_n}\frac{i}{A_n}=\frac{A_n+1}{2}.$$

$\frac{A_n+1}{2}=O(n^2)$ is ignorable because in (\ref{43}), the goal is to prove $\sum\limits_{i=1}^{A_n}|\iota_i|=O\left(n^{\frac{5}{2}+\epsilon}\right)$.
However, it is easier to control $\hat{k}-3$ than $\hat{k}-\frac{3A_n-1}{A_n}$, so we are going to use $\hat{k}-3$ moving forward and let $\theta'_i= \hat{k}\left(T^{i-1}\left(\frac{1}{n},1\right)\right)-3$ and $\theta_i=\sum\limits_{j=1}^i\theta'_j$.

\section{Excursion and properties of $\hat{k}-3$}
\label{105}

In this section, we introduce the excursion, show some of its properties, and define the moduli space of excursions. Then we establish the main technical results, including the reset control (Theorem \ref{66}) and overall monotonicity (Theorem \ref{84}), which will explain why the discretized approximation function $\hat{k}-3$ is easier to control.

\subsection{Excursion}

\begin{ddd}   \label{74}   (excursion)

Let the depth of a point $(a,b)$ be $\frac{1}{a}$. If two points of a BCZ orbit are deeper than all the points lying between these two points, we refer to the process from one of these two points to the other as an excursion. For example, let $(a_n,b_n)=T^n(a,b)$, $s<t$, $s,t\in\mathbb{Z}$. If
$$\frac{1}{a_m}<\frac{1}{a_s},\frac{1}{a_t},$$
for any $m\in(s,t)$, then the process that starts from $(a_s,b_s)$ and stops when it reaches $(a_t,b_t)$ for the first time is considered an excursion. In the sequel, for convenience, we will omit the phrase ``for the first time" and simply refer to the process from $(a_s,b_s)$ to $(a_t,b_t)$ as an excursion.

\end{ddd}

Now, we show some examples of excursions. Let $r_i$ be the number such that the Farey sequence terms $\frac{p_{r_i}}{q_{r_i}}=\frac{1}{i}$ for $i\geq1$.

\begin{itemize}
\item For any $j\in[1,r_i-1]$, since $\frac{p_j}{q_j}<\frac{1}{i}$, we know that $q_j>i=q_{r_i}\geq 1=q_0$. As $T^j(\frac{1}{n},1)=(\frac{q_j}{n},\frac{q_{j+1}}{n})$, thus the process from $(\frac{1}{n},1)$ to $(\frac{q_{r_i}}{n},\frac{q_{r_i+1}}{n})$ is an excursion. In particular, when $i=1$, $r_1=A_n$, we know that the process from $(\frac{1}{n},1)$ back to itself is not only a periodic orbit, but also an excursion.

\item For any $j\in[r_{i+1}+1,r_i-1]$, since $\frac{1}{i+1}<\frac{p_j}{q_j}<\frac{1}{i}$, we know that $q_j>i+1=q_{r_{i+1}}> i=q_{r_i}$. Thus the process from $(\frac{q_{r_{i+1}}}{n},\frac{q_{r_{i+1}+1}}{n})$ to $(\frac{q_{r_i}}{n},\frac{q_{r_i+1}}{n})$ is an excursion.
\end{itemize}

In \cite{me}, we show that for an excursion from $(a_0,b_0)$ to $T^s(a_0,b_0)=(a_s,b_s)$, when $i\in[1,s-1]$, we have $a_i=u_ia_0+v_ia_s$, where $u_i,v_i\in\mathbb{Z}^+$, $(u_i,v_i)=1$, when $1\leq i< j \leq s-1$, we have
$$\frac{v_i}{u_i}<\frac{v_j}{u_j}.$$
In fact, we also have the following one-to-one correspondence:
\begin{equation}\{a_i\mid i\in[1,s-1]\}\longleftrightarrow\{(u,v)\mid u,v\in\mathbb{Z}^+,(u,v)=1,ua_0+va_s\leq1\}.\label{64}\end{equation}
The number of those primitive points is $s-1$ which is also the number of pairs of coprime positive integers $(u,v)$ such that $ua_0+va_s\leq1$.

Two points worth mentioning are $a_1=\left[\frac{1-a_s}{a_0}\right]a_0+a_s$ and $a_{s-1}=a_0+\left[\frac{1-a_0}{a_s}\right]a_s$. We also obtain the following estimation of the length of the excursion.

\begin{llll}       \label{65}     (length of the excursion)

For an excursion $(a_i,b_i)_{i=0}^s$ where $s>0$, $a_0=a$, $a_s=b$, we have
$$s=\frac{3}{\pi^2}\frac{1}{ab}+O\left(\max\left\{\frac{1}{a},\frac{1}{b}\right\}\log\left(\min\left\{\frac{1}{a},\frac{1}{b}\right\}\right)\right).$$

\end{llll}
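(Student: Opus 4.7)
The plan is to exploit the bijection stated in (\ref{64}) between the interior points $\{a_i : 1 \leq i \leq s-1\}$ of the excursion and coprime positive-integer pairs $(u,v)$ satisfying $ua + vb \leq 1$. This reduces the length estimate to a primitive lattice-point count in a triangle:
$$s - 1 = \#\bigl\{(u,v) \in \mathbb{Z}_{>0}^{2} : \gcd(u,v) = 1,\ ua + vb \leq 1\bigr\}.$$

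My first step would be to remove the coprimality constraint by Möbius inversion. Writing $N(a,b)$ for the lattice-point count without coprimality and $N_{\mathrm{prim}}(a,b)$ for the count with it, grouping $(u,v) = (du', dv')$ with $\gcd(u',v') = 1$ yields $N(a,b) = \sum_{d \geq 1} N_{\mathrm{prim}}(da,db)$, and therefore
$$N_{\mathrm{prim}}(a,b) = \sum_{d=1}^{D} \mu(d)\, N(da, db), \qquad D := \lfloor 1/(a+b) \rfloor,$$
since $N(da,db) = 0$ once $d(a+b) > 1$.

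Next, I would estimate $N(a,b)$ by an area-plus-perimeter bound. Expressing it as $\sum_{u=1}^{\lfloor (1-b)/a \rfloor} \lfloor (1-ua)/b \rfloor$ and replacing each floor by its argument with an $O(1)$ error, an elementary summation produces
$$N(a,b) = \frac{1}{2ab} + O\!\left(\max\!\left\{\tfrac{1}{a}, \tfrac{1}{b}\right\}\right).$$
Substituting into the Möbius identity and separating main term from error:
$$N_{\mathrm{prim}}(a,b) = \frac{1}{2ab} \sum_{d=1}^{D} \frac{\mu(d)}{d^{2}} + O\!\left(\max\!\left\{\tfrac{1}{a}, \tfrac{1}{b}\right\} \sum_{d=1}^{D} \frac{1}{d}\right).$$

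Finally, I would close the Möbius series using $\sum_{d \geq 1} \mu(d)/d^{2} = 6/\pi^{2}$, with tail $\sum_{d > D} 1/d^{2} = O(1/D) = O(a+b)$ contributing $O(\max\{1/a, 1/b\})$ after division by $2ab$, giving the advertised main term $3/(\pi^{2} ab)$. The harmonic sum contributes $O(\log D)$, and since $D \leq 1/\max(a,b) = \min\{1/a, 1/b\}$, the total error is $O(\max\{1/a,1/b\}\,\log \min\{1/a,1/b\})$, as required. The main subtlety to track carefully is precisely this asymmetry: the effective cutoff $D$ is governed by the \emph{larger} of $a, b$, which is what produces a logarithm of the \emph{smaller} quantity $\min\{1/a,1/b\}$ inside the error; if one blithely bounded $\log D$ by $\log(1/\min(a,b))$ one would lose the sharper form of the error term stated in the lemma.
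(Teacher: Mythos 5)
The paper does not reproduce a proof of Lemma~\ref{65}: it is cited from the author's companion preprint \cite{me}, and only the upper bound $s\leq 1/(ab)$ is derived in the remark that follows. Your argument --- reducing via the correspondence~(\ref{64}) to a count of primitive lattice points in the triangle $\{(u,v):u,v\geq 1,\ ua+vb\leq 1\}$, removing coprimality by M\"obius inversion with cutoff $D=\lfloor 1/(a+b)\rfloor$, applying the elementary area-plus-boundary estimate $N(a,b)=\tfrac{1}{2ab}+O(\max\{1/a,1/b\})$, and closing the M\"obius series with a tail bound --- is the canonical derivation, and your error bookkeeping is sound. You correctly note that~(\ref{64}) yields $s-1$ rather than $s$, which is absorbed by the error term, and your observation that the cutoff $D$ is governed by $a+b$, hence by $\max\{a,b\}$, is exactly the point that produces $\log\min\{1/a,1/b\}$ rather than the larger $\log\max\{1/a,1/b\}$. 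Two small caveats worth being explicit about: the bound $\sum_{d>D}1/d^{2}=O(1/D)=O(a+b)$ tacitly assumes $a+b\leq 1$ so that $D\geq 1$ (when $a+b>1$ the excursion has $s=1$ and the asymptotic is vacuous), and when $\min\{1/a,1/b\}$ is close to $1$ the stated logarithmic factor degenerates to near zero although the actual discrepancy is still of size $\max\{1/a,1/b\}$ --- this is a cosmetic defect already present in the lemma as written (the usual $\log(2+\cdot)$ convention) rather than a gap in your reasoning.
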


\begin{rrr}(Upper bound for the length of the excursion)

In Lemma \ref{65}, we provided an estimation of the length of an excursion when it is particularly long. However, we can also establish an upper bound for the length of any excursion.
\begin{align*}
s-1&=\sum_{(u,v)=1, \text{ }ua+vb\leq1}1\\
&\leq\sum_{(u,v): \text{ }ua+vb\leq1}1.
\end{align*}

If $a\in\left(\frac{1}{k+1},\frac{1}{k}\right]$ where $k\in\mathbb{Z}^+$, then $[\frac{1}{a}]=k$, thus $[\frac{1}{a}]a>\frac{k}{k+1}\geq\frac{1}{2}$. Also, we have $[\frac{1}{b}]b>\frac{1}{2}$. So $[\frac{1}{a}]a+[\frac{1}{b}]b>1$ which means that $([\frac{1}{a}],[\frac{1}{b}])$ doesn't satisfy $ax+by\leq1$. Then we have
\begin{align*}
s&\leq\left(\sum_{(u,v): \text{ }ua+vb\leq1}1\right)+1\\
&\leq\sum_{(u,v):\text{ }u\leq[\frac{1}{a}],\text{ }v\leq[\frac{1}{b}]}1\\
&\leq\frac{1}{ab}.
\end{align*}

So for the asymptotic behavior, we have $s\approx\frac{3}{\pi^2}\frac{1}{ab}$; for every excursion, we have an upper bound:
\begin{equation}
s\leq\frac{1}{ab}.    \label{35}
\end{equation}

The equality is achieved when $a=b=1$ and $s=1$.

\end{rrr}

Notice that every x-coordinate of an excursion can be decided by the x-coordinates of two endpoints. We have the following lemma:

\begin{llll}   \label{75}
For any $(a,b)\in(0,1]^2$, there exists a unique excursion $(a_i,b_i)_{i=0}^n$ such that $a_0=a, a_n=b$.

\end{llll}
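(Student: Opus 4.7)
The plan is to handle uniqueness and existence separately. Uniqueness will be immediate from the characterization cited from \cite{me}: any excursion with $a_0=a$ and $a_n=b$ has its middle $x$-coordinates in bijection with
$$S := \{(u,v) \in (\mathbb{Z}^+)^2 : (u,v)=1,\ ua + vb \leq 1\},$$
via $a_i = u_i a + v_i b$, and the slopes $v_i/u_i$ are strictly increasing in $i$. Hence $(a_1,\dots,a_{n-1})$ is determined by $(a,b)$, and since $b_i = a_{i+1}$ the $y$-coordinates are determined too.

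For existence I will construct the candidate explicitly. If $a+b>1$ then $S$ is empty (any $(u,v)\in (\mathbb{Z}^+)^2$ gives $ua+vb \geq a+b > 1$), and the trivial one-step section $(a,b)\mapsto T(a,b)$ already has $a_0=a$, $a_1=b$, so we take $n=1$. If $a+b\leq 1$, I list $S = \{(u_1,v_1), \dots, (u_{n-1},v_{n-1})\}$ in increasing order of slope $v/u$, adjoin the formal endpoints $(u_0,v_0):=(1,0)$ and $(u_n,v_n):=(0,1)$, and set $a_i := u_i a + v_i b$ together with $b_i := a_{i+1}$.

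The crux is a Farey-type lemma: consecutive entries of the extended list satisfy the unimodular relation $u_{i+1}v_i - u_iv_{i+1} = 1$. In our restricted setting this holds because a strictly-between primitive Farey mediant would itself lie in $S$, contradicting consecutiveness. From unimodularity at two consecutive levels, together with $(u_i,v_i)=1$, one extracts a positive integer $k_i$ with $u_{i-1}+u_{i+1}=k_iu_i$ and $v_{i-1}+v_{i+1}=k_iv_i$, yielding the three-term relation $a_{i-1}+a_{i+1}=k_ia_i$ --- exactly the generalized arithmetic recurrence of \S\ref{102}. The BCZ identity $T(a_{i-1},a_i)=(a_i,a_{i+1})$ then reduces to $k_i = [(1+a_{i-1})/a_i]$, which in turn reduces to the two bounds $a_{i+1}\leq 1$ (from membership in $S$, or $a_n=b\leq 1$ at the endpoint) and $a_i+a_{i+1}>1$ (the Farey triangle condition). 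The latter is the mediant argument in the interior; at the boundary, unimodularity forces $v_1=1$ and $u_{n-1}=1$, so the mediants $(u_1+1,1)$ and $(1,v_{n-1}+1)$ are automatically coprime, and the same reasoning closes the loop. The excursion condition is immediate: each middle $a_i = u_ia+v_ib \geq a+b > \max(a,b)$.

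The main obstacle is setting up the Farey/Stern-Brocot combinatorics for consecutive lattice points of the restricted set $S$ cleanly; once the unimodular relation is in place, every remaining step is a short computation.
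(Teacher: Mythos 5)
Your uniqueness argument matches the paper's: both read it off the bijection (\ref{64}), which fixes the interior $x$-coordinates as $\{ua+vb : u,v\in\mathbb{Z}^+,\ (u,v)=1,\ ua+vb\le 1\}$ ordered by slope, and hence the whole orbit via $b_i=a_{i+1}$. For existence you take a genuinely different route. The paper builds the lattice spanned by $(a,0)$ and $(b,1/a)$, identifies it with $\Lambda_{a_0,b_0}$ for $b_0=b+\left[\frac{1-b}{a}\right]a$, and invokes Remark~2.1 of \cite{me} (the BCZ iterates enumerate the primitive lattice points of the strip $(0,1]\times\mathbb{R}^+$ by increasing slope), reading off $a_s=b$ from the slope rank of $(b,1/a)$ and the interior depth bound from the fact that $(a,0),(b,1/a)$ form a basis. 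You instead construct the candidate $a_i=u_ia+v_ib$ by ordering $S$ by slope, prove consecutive unimodularity by a mediant/Pick argument (convexity preserves $ua+vb\le1$, so a between-slope primitive point would land in $S$), extract $a_{i-1}+a_{i+1}=k_ia_i$ with $k_i\in\mathbb{Z}^+$ from unimodularity at adjacent levels together with $(u_i,v_i)=1$, and reduce the itinerary identity $k_i=\left[\frac{1+a_{i-1}}{a_i}\right]$ to the two bounds $a_{i+1}\le 1$ and $a_i+a_{i+1}>1$, the latter again a mediant contradiction, including at the two boundary indices where it forces $v_1=1$ and $u_{n-1}=1$. This is correct and has the virtue of being self-contained and of making the generalized-arithmetic-sequence structure of \S\ref{102} emerge directly from Farey/Stern--Brocot combinatorics rather than from the lattice picture imported from \cite{me}; the cost is exactly the combinatorial setup you flag as the main obstacle (plus a harmless sign slip: with slopes increasing the determinant should be $u_iv_{i+1}-u_{i+1}v_i=1$), whereas the paper's argument is shorter given the machinery it leans on.
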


\begin{proof}[proof]
Existence:

Let $\Lambda_0$ be the lattice generated by $(a,0)$ and $(b,\frac{1}{a})$, $b_0=b+[\frac{1-b}{a}]a$. Since $(b_0,\frac{1}{a})=(b,\frac{1}{a})+[\frac{1-b}{a}](a,0)$, so $(b_0,\frac{1}{a})\in\Lambda_0$ and we know that $(a,0)$ and $(b_0,\frac{1}{a})$ are also a set of basis. Since $a_0,b_0\in(0,1]$, $a_0+b_0>1$, we have $\Lambda_0=\Lambda_{a_0,b_0}$. Let $(a_i,b_i)=T^i(a,b)$ for $i\geq0$, by Remark 2.1 of \cite{me}, we know $a_i$ is the x-coordinate of the primitive point with the $i$-th smallest slope in $(0,1]\times\mathbb{R}^+$. Therefore if $(b,\frac{1}{a})$ has the s-th smallest slope, then $a_s=b$. Also, $a_i$ would be greater than $a$ and $b$ for $i\in[1,s-1]$ because $(a,0)$ and $(b,\frac{1}{a})$ are the basis of $\Lambda_{a_0,b_0}$. This means that we have an excursion $(a_i,b_i)_{i=0}^s$ such that $a_0=a, a_s=b$.\\

Uniqueness:

Assuming there exists an excursion $(a_i,b_i)_{i=0}^s$ such that $a_0=a, a_s=b$, then by (\ref{64}), $s-1$ is the number of coprime pairs $(u,v)$ such that $ua+vb\leq 1$ and $a_i=u_ia+v_ib$ for $i\in[1,s-1]$. Thus, $b_i=a_{i+1}$ for $i\in[0,N-1]$, $(a_s,b_s)=T(a_{s-1},b_{s-1})$. When $a,b$ are fixed, all the information about the excursion is fixed, which shows the uniqueness of the excursion.

\end{proof}

According to Lemma \ref{75}, we can define the moduli space of excursions.

\begin{ddd}    \label{63}    (Moduli space of excursions)

We call $\Xi:=(0,1]^2$ the moduli space of excursions since there is a one-to-one correspondence between every point of $(0,1]^2$ and every excursion.

\end{ddd}

For any $(p,q)=1$ and $n\geq\max\{p,q\}$, $(\frac qn,\frac pn)$ is a point in the moduli space of excursions. We have the following theorem. Prior to stating it, let us clarify the notation used. $a_n\sim b_n$ means that $\displaystyle\lim_{n\rightarrow\infty}\frac{a_n}{b_n}=1$.

\begin{ttt}  \label{106}
For any piecewise continuous bounded function $f$ defined on the Farey triangle, $(p,q)=1$, we have
$$\lim_{n\rightarrow\infty}\frac{pq}{A_n}\sum_{(a,b)\in e\left(\frac qn,\frac pn\right)}
f(a,b)=\int_{\Omega}fdm,$$
where $e(x,y)$ is the excursion corresponding to $(x,y)$.

\end{ttt}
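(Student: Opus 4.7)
The strategy is to identify the excursion $e(q/n,p/n)$ as a consecutive arc of the Farey-induced periodic orbit of $(1/n,1)$ and then invoke Athreya-Cheung's Theorem 1.3 of \cite{At}. Combining Lemma \ref{65} with Walfisz's asymptotic $A_n=\frac{3}{\pi^2}n^2+O(n(\log n)^{2/3}(\log\log n)^{4/3})$ gives $\frac{pq\cdot s}{A_n}\to 1$ for $s$ the length of $e(q/n,p/n)$, so the theorem reduces to showing
$$\frac{1}{s+1}\sum_{(a,b)\in e(q/n,p/n)}f(a,b)\longrightarrow\int_\Omega f\,dm.$$

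By the existence portion of Lemma \ref{75}, the starting point of the excursion is $(q/n,b_0)$ with $b_0 n=p+q\left[\frac{n-p}{q}\right]=n-r$, where $r:=(n-p)\bmod q$. Since $\gcd(q,n-r)=\gcd(q,p)=1$ and $q+(n-r)>n$, the pair $(q,n-r)$ appears as the denominators of consecutive Farey fractions $(\rho_{i_0},\rho_{i_0+1})\in\mathcal{F}(n)$, and Lemma \ref{52} identifies the starting point with $T^{i_0}(1/n,1)$. The excursion is thus realized as the sub-orbit $\{T^{i_0+j}(1/n,1):0\leq j\leq s\}$ of the periodic orbit.

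The Farey determinant condition $c_0 q-a_0(n-r)=1$ at the starting pair, together with its analog at the ending pair $(\rho_{i_0+s-1},\rho_{i_0+s})$, forces $\rho_{i_0}=a_0/q$ with $a_0\equiv-p^{-1}\pmod{q}$ and $\rho_{i_0+s}=a^*/p$ with $a^*\equiv q^{-1}\pmod{p}$ (the forward direction of $T$ resolves any ambiguity when $p$ or $q$ equals $1$). Hence $I:=[\rho_{i_0},\rho_{i_0+s}]$ depends only on $(p,q)$, not on $n$, and $A_I(n)=s+1$. Applying Theorem 1.3 of \cite{At} to this fixed $I$ gives
$$\frac{1}{A_I(n)}\sum_{i:\rho_i\in I}f(T^i(1/n,1))\longrightarrow\int_\Omega f\,dm,$$
which combined with the normalization $\frac{pq\cdot s}{A_n}\to 1$ yields the claim.

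The main obstacle is establishing that the $\rho$-interval associated with the excursion is independent of $n$; once the Farey-determinant identity pins the endpoint numerators down as modular inverses of $p$ and $q$, Athreya-Cheung's equidistribution theorem handles the rest.
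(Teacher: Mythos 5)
Your proposal is correct and follows the same strategy as the paper: both realize the excursion $e(q/n,p/n)$ as the sub-orbit of $(1/n,1)$ corresponding to a fixed Farey interval $I$ independent of $n$, apply Athreya--Cheung's Theorem~1.3 to that fixed $I$, and normalize using Lemma~\ref{65}. The only cosmetic difference is how the interval is pinned down: the paper simply chooses Bezout coefficients $s,t$ with $qt-ps=1$ and sets $I=[s/q,t/p]$, while you recover the same endpoints by tracing the Lemma~\ref{75} construction and the Farey determinant relations, arriving at the identical modular characterization of the numerators.
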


\begin{proof}[proof]
For any $(p,q)=1$, there exists $s,t\geq1$ such that $qt-ps=1$. Then for $n>\max\{p,q\}$, $\frac sq$ and $\frac tp$ are two fractions in $\mathcal{F}(n)$ while the denominator of any fraction between $\frac sq$ and $\frac tp$ is greater than $\max\{p,q\}$. Let all the denominators of the fractions in between are $q_0=q, q_1,\cdots, q_k=p$. By Lemma \ref{52}, we know that $\left(\frac{q_i}{n},\frac{q_{i+1}}{n}\right)_{i=0}^k$ is a section of BCZ orbit where $q_{k+1}$ is the denominator of the fraction after $\frac tp$. Since $q_i>\max\{p,q\}$ for $i\in[1,k-1]$, this orbit is an excursion which corresponds to $\left(\frac qn,\frac pn\right)$.

By Theorem 1.3 of \cite{At}, let $[\alpha,\beta]=\left[\frac sq, \frac tp\right]$, we have $\frac{1}{A_I(N)}\sum\limits_{i:\rho_i\in I}\delta_{T^i\left(\frac1Q,1\right)}=\frac{1}{\left|e\left(\frac qn,\frac pn\right)\right|}\sum\limits_{(a,b)\in e\left(\frac qn,\frac pn\right)}\delta_{(a,b)}$ weakly converge to Lebesgue uniform measure $dm=2dadb$ where $|e(x,y)|$ is the length of the excursion. Thus
$$\lim_{n\rightarrow\infty}\frac{1}{|e\left(\frac qn,\frac pn\right)|}\sum_{(a,b)\in e\left(\frac qn,\frac pn\right)}
f(a,b)=\int_{\Omega}fdm.$$
By Lemma \ref{65}, we have
$$\left|e\left(\frac qn,\frac pn\right)\right|\sim \frac{3}{\pi^2}\frac{n^2}{pq}\sim \frac{A_n}{pq},$$
which completes the proof.
\end{proof}

Theorem \ref{106} can be a useful tool when studying a section of a periodic orbit. Just like in the proof of Theorem \ref{86}, when we divide the orbit into several shorter sections, we could use Theorem \ref{106} in the further study of related problems.
One simple application is Lemma \ref{107}.
If $\frac pq$ is irrational, we are curious if there is a similar result. We conjecture that Theorem \ref{106} still holds.

Another property of the excursion is that the reverse of an excursion is still an excursion. But first, we need to establish that the reverse of a BCZ orbit is still a BCZ orbit.

\begin{llll}   \label{82}   (reverse of BCZ orbit)

For a BCZ orbit $T^i(a_0,b_0)=(a_i,b_i)$, $i\in\mathbb{Z}$, then $T^i(b_{s-1},a_{s-1})=(b_{s-1-i},a_{s-1-i})$, $i\in\mathbb{Z}$ is also a BCZ orbit.

\end{llll}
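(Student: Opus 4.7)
The plan is to reduce the claim to the one-step identity
$$T(b_j,a_j) = (b_{j-1},a_{j-1}) \quad \text{for every } j\in\mathbb{Z}.$$
Setting $j=s-1$ gives the $i=1$ case of the lemma; forward induction on $i\geq 0$ and, for $i<0$, inversion of $T$ on $\Omega$ (the one-step identity in reverse reads $T^{-1}(b_{j-1},a_{j-1})=(b_j,a_j)$) then yield all integer powers of $T$ applied to the swapped point.

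To establish the one-step identity, I would first recall from $T(a_{j-1},b_{j-1})=(a_j,b_j)$ that $b_{j-1}=a_j$, and, writing $k:=k(a_{j-1},b_{j-1})=[(1+a_{j-1})/b_{j-1}]$, that
$$b_j = k\,b_{j-1} - a_{j-1} = k\,a_j - a_{j-1}.$$
By the definition of the BCZ map the first coordinate of $T(b_j,a_j)$ is $a_j$, which equals $b_{j-1}$, as required. The second coordinate depends on the index $k(b_j,a_j)=[(1+b_j)/a_j]$. Substituting the recursion above gives
$$\frac{1+b_j}{a_j} = k + \frac{1-a_{j-1}}{a_j},$$
so it remains to show that the fractional-part term lies in $[0,1)$: nonnegativity follows from $a_{j-1}\leq 1$, and $\frac{1-a_{j-1}}{a_j}<1$ is equivalent to $a_{j-1}+a_j>1$, which, after using $a_j=b_{j-1}$, is exactly the Farey triangle condition $a_{j-1}+b_{j-1}>1$ at the point $(a_{j-1},b_{j-1})\in\Omega$. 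Hence $k(b_j,a_j)=k$, and the second coordinate of $T(b_j,a_j)$ is $k\,a_j-b_j=a_{j-1}$. One should also observe that $(b_j,a_j)\in\Omega$ itself, which is automatic since $(a_j,b_j)\in\Omega$ and $\Omega$ is symmetric under coordinate swap.

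The only genuine content is the index-preservation step; both halves of the bound $\frac{1-a_{j-1}}{a_j}\in[0,1)$ are needed, and the strict upper bound rests entirely on the Farey triangle inequality. Everything else (forward and backward induction on $i$, invertibility of $T$, membership in $\Omega$) is bookkeeping, so the proof should be short once this one-step computation is in place.
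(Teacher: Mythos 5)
Your proof is correct and takes essentially the same approach as the paper: both establish the one-step identity $T(b_j,a_j)=(b_{j-1},a_{j-1})$ by pinning down the index $k(b_j,a_j)=k(a_{j-1},b_{j-1})$ through a two-sided floor estimate in which the upper bound comes from the Farey triangle inequality $a_{j-1}+b_{j-1}>1$ and the lower bound from $a_{j-1}\le 1$, then read off the second coordinate as $ka_j-b_j=a_{j-1}$.
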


\begin{proof}[proof]

By the definition of the BCZ map,
$$a_{m+2}=b_{m+1}=k(a_m,b_m)b_m-a_m=k(a_m,b_m)a_{m+1}-a_m,$$
$$k(a_m,b_m)=\frac{a_m+a_{m+2}}{a_{m+1}},$$
$$k^T(a_m,b_m)=k(b_m,a_m)=\left[\frac{1+b_m}{a_m}\right]=\left[\frac{1+a_{m+1}}{a_m}\right].$$

Since
$$\frac{a_{m-1}+a_{m+1}}{a_m}\leq \frac{1+a_{m+1}}{a_m}<\frac{a_{m-1}+a_m+a_{m+1}}{a_m}=\frac{a_{m-1}+a_{m+1}}{a_m}+1,$$
we have
\begin{equation}
k^T(a_m,b_m)=\left[\frac{1+a_{m+1}}{a_m}\right]=\frac{a_{m-1}+a_{m+1}}{a_m}=k(a_{m-1},b_{m-1}).   \label{9}
\end{equation}
Thus,
$$T(b_m,a_m)=\left(a_m,k(b_m,a_m)a_m-b_m\right)=(a_m,a_{m-1}+a_{m+1}-a_{m+1})=(b_{m-1},a_{m-1}),$$
for $m\in\mathbb{Z}$.
Therefore,
$$T^i(b_{s-1},a_{s-1})=(b_{s-1-i},a_{s-1-i}),$$
for $i\in\mathbb{Z}$, which means that $T^i(b_{s-1},a_{s-1})=(b_{s-1-i},a_{s-1-i})$, $i\in\mathbb{Z}$ is also a BCZ orbit.

\end{proof}

\begin{ccc}   \label{83}
For an excursion $(a_m,b_m)_{m=0}^s$, $(b_{s-1-m},a_{s-1-m})_{m=0}^{s}$ is also an excursion.
\end{ccc}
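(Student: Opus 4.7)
The plan is to reduce this corollary directly to Lemma \ref{82} together with a simple index-tracking argument, exploiting the fact that the excursion condition is symmetric in the two endpoints.

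First, I would invoke Lemma \ref{82}, which already gives that $(b_{s-1-m}, a_{s-1-m})_{m \in \mathbb{Z}}$ is a BCZ orbit (it is the $T$-orbit of $(b_{s-1}, a_{s-1})$). So the only content left to verify is the depth condition in Definition \ref{74} for the finite piece $m \in [0,s]$.

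Next, I would make the indices transparent by writing $c_m := b_{s-1-m}$ for the x-coordinate of the reversed orbit. Using the BCZ identity $b_j = a_{j+1}$ (which is immediate from the definition of $T$), this becomes $c_m = a_{s-m}$ for every $m \in [0,s]$. Thus the x-coordinates of the reversed orbit are precisely the list $a_s, a_{s-1}, \ldots, a_0$, i.e., the x-coordinates of the original excursion in reverse order.

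With this identification the excursion condition is almost tautological. The endpoints of the reversed orbit are $c_0 = a_s$ and $c_s = a_0$, which are exactly the two endpoints of the original excursion. For any interior index $m \in (0,s)$, the index $s - m$ also lies in $(0,s)$, so $c_m = a_{s-m}$ is an interior x-coordinate of the original excursion. By the excursion hypothesis on $(a_m,b_m)_{m=0}^s$ we have $a_{s-m} > a_0$ and $a_{s-m} > a_s$, hence
\[
\frac{1}{c_m} = \frac{1}{a_{s-m}} < \frac{1}{a_0} = \frac{1}{c_s}, \qquad \frac{1}{c_m} < \frac{1}{a_s} = \frac{1}{c_0},
\]
for all $m \in (0,s)$, which is precisely the depth condition required for $(b_{s-1-m}, a_{s-1-m})_{m=0}^s$ to be an excursion.

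There is essentially no obstacle; the only thing to get right is the bookkeeping of indices under the combined reversal/swap. Once $c_m = a_{s-m}$ is observed, the statement follows from the fact that Definition \ref{74} treats the two endpoints symmetrically.
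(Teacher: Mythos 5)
Your proposal is correct and follows the same route as the paper: invoke Lemma \ref{82} to get that the reversal is a BCZ orbit, use the identity $b_j = a_{j+1}$ to rewrite the reversed x-coordinates as $a_{s-m}$, and then read off the depth condition from the excursion hypothesis on the original orbit. The paper's proof is terser but makes exactly the same index observation ($b_m = a_{m+1} > \max\{a_0,a_s\} = \max\{b_{-1},b_{s-1}\}$ for $m\in[0,s-2]$).
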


\begin{proof}[proof]
By Lemma \ref{82}, we know that $(b_{s-1-m},a_{s-1-m})_{m=0}^{s}$ is a BCZ orbit, since
$$b_m=a_{m+1}>\max\{a_0,a_s\}=\max\{b_{-1},b_{s-1}\},$$
for $m\in[0,s-2]$, which means that $(b_{s-1-m},a_{s-1-m})_{m=0}^{s}$ is an excursion.
\end{proof}

In the moduli space of excursions, $(a_m,b_m)_{m=0}^s$ is corresponding to $(a_0,a_s)$ while $(b_{s-1-m},a_{s-1-m})_{m=0}^{s}$ is corresponding to $(b_{s-1},b_{-1})=(a_s,a_0)$.

\subsection{Reset of $\hat{k}-3$}

Now, we will demonstrate the estimation of the summation of $\hat{k}-3$ over the excursion, which will assist us in providing an estimation of the reset. The bound we provide will solely depend on the ratio of the x-coordinates of the two endpoints of the excursion, whereas the length of the excursion and the absolute value of the x-coordinates of the two endpoints will be irrelevant. If the x-coordinates of two endpoints are very small but closed to each other, then the excursion will be very long, according to Lemma \ref{65}. Nevertheless, the absolute value of the summation of $\hat{k}-3$ over the excursion will be very small.

\begin{ttt}    \label{66}

If $(a_m,b_m)_{m=0}^s$ is an excursion, we have

$$\sum\limits_{m=0}^{s-1}\left(\hat{k}(a_m,b_m)-3\right)\in\left(\frac{a_s}{a_0}+\frac{a_0}{a_s}-4,\frac{a_s}{a_0}+\frac{a_0}{a_s}-2\right).$$

\end{ttt}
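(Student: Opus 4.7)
The plan is to express $S:=\sum_{m=0}^{s-1}(\hat k(a_m,b_m)-3)$ as the value of the function $h$ from Definition~\ref{61} on the extended generalized arithmetic sequence of x-coordinates $(a_{-1},a_0,a_1,\ldots,a_s,a_{s+1})$, where $a_{-1}$ and $a_{s+1}$ are the x-coordinates of $T^{-1}(a_0,b_0)$ and $T(a_s,b_s)$, and then to estimate $h$ via the reduction tools of Section~\ref{102}. By Lemma~\ref{72} applied to the BCZ orbit, $\hat k(a_m,b_m)=\tfrac12(\kappa_m+\kappa_{m+1})$ where $\kappa_j:=(a_{j-1}+a_{j+1})/a_j$ denotes the itinerary at position $j$; a telescoping together with the itinerary expression for $h$ recorded after Definition~\ref{61} yields
\[
S=\tfrac{\kappa_0+\kappa_s}{2}+\sum_{m=1}^{s-1}\kappa_m-3s=h(a_{-1},a_0,a_1,\ldots,a_s,a_{s+1}).
\]

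I would then prove the estimate on $h$ by strong induction on $s$. Because the excursion property forces every interior $a_j$ ($1\le j\le s-1$) to strictly dominate both $a_0$ and $a_s$, Lemma~\ref{80} guarantees that at every stage a strict interior local maximum exists. Whenever this local maximum lies in the admissible range $[4,n-3]$ of Theorem~\ref{60}, its elimination shortens the sequence by one term without changing the generalized-arithmetic property (Lemma~\ref{59}) or the value of $h$, so the inductive hypothesis applies to the reduced configuration. If the only available local maximum falls just outside this range (positions $3$ or $n-2$), I would first invoke Corollary~\ref{83} together with the symmetry of both $S$ and the target interval $(\tfrac{a_s}{a_0}+\tfrac{a_0}{a_s}-4,\tfrac{a_s}{a_0}+\tfrac{a_0}{a_s}-2)$ under $a_0\leftrightarrow a_s$ (using that $\hat k$ is symmetric in its two arguments) to attempt the reduction from the opposite end; the remaining irreducible configurations are of bounded length and form the base cases.

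In each base case, $h$ consists of only a handful of explicit summands which I would evaluate using the descriptions $a_1=\lfloor(1-a_s)/a_0\rfloor a_0+a_s$ and $a_{s-1}=a_0+\lfloor(1-a_0)/a_s\rfloor a_s$ from Remark~\ref{64}, together with the floor identities $\kappa_0=\lfloor(1+a_1)/a_0\rfloor$ and $\kappa_s=\lfloor(1+a_{s-1})/a_s\rfloor$, and show that the total discrepancy between $h$ and $\tfrac{a_s}{a_0}+\tfrac{a_0}{a_s}-3$ stays strictly within $\pm 1$ via elementary floor-function manipulations. The main obstacle will be the combinatorial bookkeeping in the reduction step: the position restriction $m\in[4,n-3]$ in Theorem~\ref{60} genuinely rules out certain boundary configurations, so the induction must be organized around a complexity measure that decreases through reversal and direct base-case analysis; the concluding floor-function estimate will also require splitting into sub-cases according to the relative sizes of $\kappa_0$ and $\kappa_s$.
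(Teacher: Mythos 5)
Your starting point coincides with the paper's: write $S=\sum_{m=0}^{s-1}(\hat k(a_m,b_m)-3)$ as $h$ evaluated on the generalized arithmetic sequence of $x$-coordinates extended to one step before $a_0$ and one step after $a_s$, then reduce by iteratively deleting interior local maxima using Lemma~\ref{80}, Lemma~\ref{59} and Theorem~\ref{60}. The difference, and the gap, is in how the position constraint $m\in[4,n-3]$ of Theorem~\ref{60} is managed.

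You work directly with the length-$(s+3)$ sequence $(a_{-1},a_0,\ldots,a_s,a_{s+1})$. The interior terms $a_1,\ldots,a_{s-1}$ then occupy positions $3,\ldots,s+1$, so both $a_1$ (position $3$) and $a_{s-1}$ (position $n-2$) lie outside the admissible window $[4,n-3]$; when the largest interior term happens to be $a_1$ or $a_{s-1}$, Theorem~\ref{60} cannot be applied. You notice this and propose to repair it with reversal (Corollary~\ref{83}) plus direct base-case computations, but this repair is not actually set up to work: reversal swaps the two bad positions with each other rather than pushing the offending maximum into range, so a configuration whose maximum sits at $a_1$ still has its maximum at a forbidden position after reversal. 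The paper removes the problem at the source: it first passes from $h(a_{-1},\ldots,a_{s+1})$ to $h(a_{-2},\ldots,a_{s+2})$ by subtracting the two explicit boundary summands (eq.~(\ref{36})). In the length-$(s+5)$ sequence the interior terms occupy positions $4,\ldots,s+2=n-3$, so \emph{every} interior local maximum is automatically admissible, all of $a_1,\ldots,a_{s-1}$ can be deleted, and the reduced value $h(a_{-2},a_{-1},a_0,a_s,a_{s+1},a_{s+2})$ together with the two boundary corrections is computed in closed form (eq.~(\ref{91})) and bounded by the elementary inequalities $-a_0<a_{-1}-a_1<a_0$, $-a_s<a_{s+1}-a_{s-1}<a_s$. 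This two-step-wider extension is the key simplification your proposal is missing.

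A smaller but genuine issue is the framing as ``strong induction on $s$ with the inductive hypothesis applied to the reduced configuration.'' After one deletion the middle block is a generalized arithmetic sequence, but it is no longer the $x$-coordinate sequence of a BCZ excursion, and the outer terms $a_{-1},a_{s+1}$ are no longer the predecessor and successor of that block under $T$. So the theorem's statement, which is about excursions, does not literally apply to the reduced object; what is preserved is the value of $h$, and the reduction should be phrased as iterated elimination down to a fixed-length terminal sequence, as the paper does, rather than as an induction whose hypothesis is the theorem itself. Once you adopt the $(a_{-2},\ldots,a_{s+2})$ extension, the base-case floor-function analysis you sketch also becomes unnecessary: the terminal configuration is always the same six-term sequence, and the bound follows from two one-line estimates on the boundary corrections.
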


\begin{proof}[proof]

By (\ref{9}), we have
$$k^T(a_m,b_m)=\left[\frac{1+a_{m+1}}{a_m}\right]=\frac{a_{m-1}+a_{m+1}}{a_m},$$
thus, by the definition of $h$, we know that
\begin{align}
\sum\limits_{m=0}^{s-1}\left(\hat{k}(a_m,b_m)-3\right)&=h(a_{-1},a_0,a_1,\cdots,a_{s+1})  \nonumber \\
&=h(a_{-2},a_{-1},a_0,a_1,\cdots,a_{s+2}) \nonumber \\
&-\left(\frac{\frac{a_{-2}+a_{0}}{a_{-1}}+\frac{a_{-1}+a_{1}}{a_{0}}}{2}-3\right)
-\left(\frac{\frac{a_{s-1}+a_{s+1}}{a_{s}}+\frac{a_{s}+a_{s+2}}{a_{s+1}}}{2}-3\right).    \label{36}
\end{align}

Next, we are going to calculate $h(a_{-2},a_{-1},a_0,a_1,\cdots,a_{s+2})$.

By example 1 of Remark \ref{62}, $(a_i)_{i=-2}^{s+2}$ is a generalized arithmetic sequence. Since $(a_i,b_i)_{i=0}^s$ is an excursion, if we pick a largest term $a_m$ among $(a_i)_{i=0}^{s}$, we know that $m\in[1,s-1]$. Because $a_0$ is smaller than every term of $(a_i)_{i=1}^{s-1}$, including $a_m$, by Lemma \ref{80}, we know that $a_m$ is a local maximum. So if we eliminate $a_m$ from $(a_i)_{i=-2}^{s+2}$, by Lemma \ref{59}, we know that the new sequence is still a generalized arithmetic sequence. Also, by Theorem \ref{60}, we know that the value of $h$ at the sequence will stay the same.

Then we can continue this process of eliminating largest terms between $a_0$ and $a_s$ from the new sequence until we eliminate all the terms between $a_0$ and $a_s$. After every elimination, the new sequence will still be a generalized arithmetic sequence and the value of $h$ at the new sequence will remain the same.

After eliminating all the terms between $a_0$ and $a_s$, we have a new sequence:
$$(a_{-2},a_{-1},a_0,a_s,a_{s+1},a_{s+2}),$$
so we have
\begin{equation}
h(a_{-2},a_{-1},a_0,a_1,\cdots,a_{s+2})=h(a_{-2},a_{-1},a_0,a_s,a_{s+1},a_{s+2}).   \label{37}
\end{equation}
Then by (\ref{36}), (\ref{37}), we have
\begin{align}
&h(a_{-1},a_0,a_1,\cdots,a_{s+1}) \nonumber \\ \nonumber
=&h(a_{-2},a_{-1},a_0,a_s,a_{s+1},a_{s+2})-\left(\frac{\frac{a_{-2}+a_{0}}{a_{-1}}+\frac{a_{-1}+a_{1}}{a_{0}}}{2}-3\right)\\ \nonumber
&-\left(\frac{\frac{a_{s-1}+a_{s+1}}{a_{s}}+\frac{a_{s}+a_{s+2}}{a_{s+1}}}{2}-3\right)\\
=&\frac{a_s}{a_0}+\frac{a_{-1}-a_1}{2a_0}+\frac{a_0}{a_s}+\frac{a_{s+1}-a_{s-1}}{2a_s}-3.  \label{91}
\end{align}

Since $-a_0<a_{-1}-1\leq a_{-1}-a_1\leq1-a_1<a_0$, $-a_s<a_{s+1}-1\leq a_{s+1}-a_{s-1}\leq1-a_{s-1}<a_s$, we have

$$\frac{a_s}{a_0}+\frac{a_0}{a_s}-4<h(a_{-1},a_0,a_1\cdots,a_s,a_{s+1})<\frac{a_s}{a_0}+\frac{a_0}{a_s}-2.$$

\end{proof}

\begin{rrr}   \label{92}
Another perspective to understanding this theorem is to transform $\hat{k}$.\\
\begin{align*}
 \hat{k}(a_i,b_i) - 3 &=
  \frac12\left(\frac{a_i}{a_{i+1}} + \frac{a_{i+1}}{a_i} \right) +
  \frac12\left(\frac{a_{i+2}}{a_{i+1}} + \frac{a_{i-1}}{a_i}\right) - 3\\
  &= \left(\frac{a_i}{a_{i+1}} + \frac{a_{i+1}}{a_i} - 3\right) +
  \frac12\left(\frac{a_{i+2}-a_i}{a_{i+1}} -
           \frac{a_{i+1}-a_{i-1}}{a_i}\right).
\end{align*}

For a local maximum $a_i=a_{i-1}+a_{i+1}$, we have
$$\left(\frac{a_i}{a_{i-1}} + \frac{a_{i-1}}{a_i} - 3\right)+\left(\frac{a_i}{a_{i+1}} + \frac{a_{i+1}}{a_i} - 3\right)=
\left(\frac{a_{i-1}}{a_{i+1}} + \frac{a_{i+1}}{a_{i-1}} - 3\right).$$

In order to calculate $\sum\limits_{m=0}^{s-1}\left(\hat{k}(a_m,b_m)-3\right)$, just like the proof of Theorem \ref{66}, we can eliminate all the terms between $a_0$ and $a_s$. Therefore, we have
\begin{align*}
\sum\limits_{m=0}^{s-1}\left(\hat{k}(a_m,b_m)-3\right)&=\sum\limits_{m=0}^{s-1}\left(\frac{a_i}{a_{i+1}} + \frac{a_{i+1}}{a_i} - 3\right)+\sum\limits_{m=0}^{s-1}\frac12\left(\frac{a_{i+2}-a_i}{a_{i+1}} -
           \frac{a_{i+1}-a_{i-1}}{a_i}\right)\\
&=\left(\frac{a_i}{a_{i+1}} + \frac{a_{i+1}}{a_i}-3\right)+\frac12\left(\frac{a_{s+1}-a_{s-1}}{a_{s}} -
           \frac{a_{1}-a_{-1}}{a_0}\right).
\end{align*}
Thus we get (\ref{91}).

\end{rrr}

\begin{ccc}    \label{85}
From Theorem \ref{66}, since $\frac{a_s}{a_0}+\frac{a_0}{a_s}\geq2$, we have
$$\frac{a_s}{a_0}+\frac{a_0}{a_s}>\sum\limits_{m=0}^{s-1}\left(\hat{k}(a_m,b_m)-3\right)>\frac{a_s}{a_0}+\frac{a_0}{a_s}-4\geq-2\geq-\left(\frac{a_s}{a_0}+\frac{a_0}{a_s}\right).$$
Thus,
\begin{equation}
\left|\sum\limits_{m=0}^{s-1}\left(\hat{k}(a_m,b_m)-3\right)\right|<\frac{a_s}{a_0}+\frac{a_0}{a_s}.     \label{39}
\end{equation}

\end{ccc}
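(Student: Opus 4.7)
The plan is to read off the symmetric absolute-value bound directly from the two-sided estimate furnished by Theorem \ref{66}. Writing $X := \frac{a_s}{a_0} + \frac{a_0}{a_s}$ for brevity, Theorem \ref{66} asserts that the cocycle sum $S := \sum_{m=0}^{s-1}(\hat{k}(a_m,b_m)-3)$ satisfies the strict two-sided bound $X - 4 < S < X - 2$. The upper side $S < X$ is then immediate, since $X - 2 < X$. For the lower side $-X < S$, it suffices to show $X - 4 \geq -X$, which rearranges to $X \geq 2$. This last inequality is the elementary estimate $t + t^{-1} \geq 2$ applied to the positive real $t = a_s/a_0$; equivalently, it follows from $(a_s - a_0)^2 \geq 0$ after dividing by $a_0 a_s > 0$.

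Combining the two sides yields $|S| < X$, which is the desired conclusion. The inequality remains strict even in the degenerate case $a_0 = a_s$ (where $X = 2$), because in that situation the strict bounds from Theorem \ref{66} become $-2 < S < 0$, which together still force $|S| < 2 = X$. In the non-degenerate case $X > 2$ the chain $-X < X - 4 < S < X - 2 < X$ gives strict inequality on both sides with room to spare. I do not anticipate any genuine obstacle here: this corollary is essentially a repackaging of Theorem \ref{66} into a clean symmetric form, convenient as a reset-control input to the induction on the energy function over the moduli space of excursions carried out in the later sections of the paper.
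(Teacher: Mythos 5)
Your proof is correct and follows exactly the paper's argument: read the two-sided bound $X-4 < S < X-2$ from Theorem \ref{66}, combine the trivial upper estimate $X-2<X$ with the lower estimate $X-4\geq -X$ (equivalent to $X\geq 2$, i.e.\ AM--GM for $a_s/a_0$), and conclude $-X<S<X$. The extra case discussion at $X=2$ is unnecessary since the single chain $-X\leq X-4<S$ already yields strict inequality, but it does no harm.
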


We can use Theorem \ref{66} to give the estimation of the reset terms $\theta_m$ of (\ref{42}) for $m=r_i$.

\begin{llll}    \label{67}
For $i\geq1$, let $\theta'_i=\hat{k}(T^{i-1}(\frac{1}{n},1))-3$ and $\theta_i=\sum\limits_{j=1}^i\theta'_j$, then
$$\theta_{r_i}\in\left(i+\frac{1}{i}-4,i+\frac{1}{i}-2\right)$$
for $i\geq1$
\end{llll}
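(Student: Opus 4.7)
The plan is to derive this as a direct corollary of Theorem \ref{66} (the reset control for $\hat{k}-3$ on an excursion). The key observation is that the initial segment of the orbit of $(1/n,1)$ up to time $r_i$ is itself one of the excursions already flagged in the examples after Definition \ref{74}.

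First I would translate the setup into the excursion framework. By Lemma \ref{52}, $T^j(1/n,1)=(q_j/n,q_{j+1}/n)$, so $(a_m,b_m):=T^m(1/n,1)$ defines a BCZ orbit with $a_0=1/n$ and $a_{r_i}=q_{r_i}/n=i/n$, the latter equality coming from the definition $\rho_{r_i}=p_{r_i}/q_{r_i}=1/i$. For any intermediate index $j\in[1,r_i-1]$, one has $p_j/q_j<1/i$ (since $1/i$ is the $r_i$-th term of $\mathcal{F}(n)$), hence $q_j>i=q_{r_i}$, and of course $q_j>1=q_0$. Thus the x-coordinates of all intermediate points strictly exceed those of both endpoints, which is exactly the defining condition of an excursion: the segment from $(a_0,b_0)=(1/n,1)$ to $(a_{r_i},b_{r_i})=(i/n,q_{r_i+1}/n)$ is an excursion of length $s=r_i$.

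Next I would simply unpack the definition of $\theta_{r_i}$ and invoke Theorem \ref{66}. We have
$$\theta_{r_i}=\sum_{j=1}^{r_i}\theta'_j=\sum_{m=0}^{r_i-1}\left(\hat{k}(a_m,b_m)-3\right),$$
so Theorem \ref{66} applied to this excursion, with $a_0/a_s=1/i$ and $a_s/a_0=i$, yields
$$\theta_{r_i}\in\left(\frac{a_s}{a_0}+\frac{a_0}{a_s}-4,\ \frac{a_s}{a_0}+\frac{a_0}{a_s}-2\right)=\left(i+\frac{1}{i}-4,\ i+\frac{1}{i}-2\right),$$
which is the claim.

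There is no real obstacle here; all the substantive work was done in Theorem \ref{66}, and this lemma only requires recognizing the correct excursion and substituting the endpoint ratios. As a sanity check, the boundary case $i=1$ gives $r_1=A_n$ and $a_s=a_0=1/n$, producing the bound $\theta_{A_n}\in(-2,0)$, consistent with the exact value $\theta_{A_n}=-1$ established in Theorem \ref{73}.
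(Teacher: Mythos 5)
Your proof is correct and takes essentially the same route as the paper's: recognize that $\{T^j(1/n,1)\}_{j=0}^{r_i}$ is an excursion with endpoint x-coordinates $1/n$ and $i/n$, then apply Theorem \ref{66} with $a_s/a_0=i$. The sanity check against Theorem \ref{73} at $i=1$ is a nice addition but is not in the paper.
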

\begin{proof}[proof]
Since $\frac{p_{r_i}}{q_{r_i}}=\frac{1}{i}$, we know that $q_j>i$ for $1\leq j\leq r_i-1$. So $a_j=\frac{q_j}{n}>\frac{q_{r_i}}{n}=a_{r_i}\geq a_0$, therefore, $\left\{T^j\left(\frac{1}{n},1\right)\right\}_{j=0}^{r_i}$ is an excursion. Then by Theorem \ref{66}, we have
\begin{align*}\theta_{r_i}=\sum\limits_{m=0}^{r_i-1}\left(\hat{k}(a_m,b_m)-3\right)&\in\left(\frac{a_{r_i}}{a_0}+\frac{a_0}{a_{r_i}}-4,\frac{a_{r_i}}{a_0}+\frac{a_0}{a_{r_i}}-2\right)\\
&=\left(i+\frac{1}{i}-4,i+\frac{1}{i}-2\right).
\end{align*}

\end{proof}

\begin{rrr}
One thing worth mentioning is that the interval of estimation of $\sum\limits_{m=0}^{s-1}\left(\hat{k}(a_m,b_m)-3\right)$ in Theorem \ref{66} is only related to the ratio of $a_0$ and $a_s$. And in Lemma \ref{67}, the interval of estimation of $\theta_{r_i}$ is unrelated to $n$, unlike $|\iota_i|=O(n\log q_i)$, which we mentioned before in the Introduction.

\end{rrr}

The main purpose of estimating the reset terms is to help control $\sum\limits_{i=1}^{A_n}|\theta_i|$ of the main result (\ref{42}).\\

\subsection{Overall monotonicity of $\hat{k}-3$}

In \cite{me}, we prove the overall monotonicity of $k-3$. It states that the partial sum of $k-3$ over an excursion reaches its maximum at the beginning, whereas reaching its minimum near the end. In fact, this property also applies to $\hat{k}-3$.

\begin{ttt}   \label{84}
For an excursion $(a_m,b_m)_{m=0}^s$ where $s\geq4$, let $$\zeta_m=\sum_{i=0}^{m-1}\left(\hat{k}\left(T^i(a_0,b_0)\right)-3\right),$$ then we have
$$\zeta_{s-1}<\zeta_m<\zeta_1$$
for $m\in[2,s-2]$.

\end{ttt}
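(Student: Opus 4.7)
My plan is to exploit the telescoping identity derived in Remark \ref{92}:
\begin{equation*}
\hat{k}(a_i,b_i) - 3 \;=\; \left(\frac{a_i}{a_{i+1}} + \frac{a_{i+1}}{a_i} - 3\right) + \frac{1}{2}\left(\frac{a_{i+2}-a_i}{a_{i+1}} - \frac{a_{i+1}-a_{i-1}}{a_i}\right).
\end{equation*}
Writing $g(x,y) := x/y + y/x - 3 \geq -1$ and $f(i) := (a_{i+1}-a_{i-1})/a_i$, summation collapses the second piece into a boundary term, giving
\begin{equation*}
\zeta_m \;=\; \sum_{i=0}^{m-1} g(a_i, a_{i+1}) \;+\; \tfrac{1}{2}\bigl(f(m) - f(0)\bigr),
\end{equation*}
and hence
\begin{equation*}
\zeta_1 - \zeta_m \;=\; -\sum_{i=1}^{m-1} g(a_i, a_{i+1}) \;+\; \tfrac{1}{2}\bigl(f(1) - f(m)\bigr).
\end{equation*}
Proving $\zeta_m < \zeta_1$ thus amounts to showing that for $m \in [2,s-2]$ the boundary term $\tfrac{1}{2}(f(1)-f(m))$ strictly dominates the accumulated $g$-sum, and the companion bound $\zeta_m > \zeta_{s-1}$ will follow by symmetry via the reverse-excursion principle of Corollary \ref{83}.

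Next I would leverage the explicit description of excursions from (\ref{64}) and \cite{me}: for $1 \leq i \leq s-1$ one has $a_i = u_i a_0 + v_i a_s$ with $(u_i,v_i)$ coprime and $v_i/u_i$ strictly increasing in $i$. In particular $a_1 = \lfloor (1-a_s)/a_0\rfloor a_0 + a_s \in (1-a_0, 1]$ and $a_{s-1} \in (1-a_s, 1]$, so the ``outer'' ratios $a_0/a_1$ and $a_s/a_{s-1}$ are small while each interior pair $(a_i, a_{i+1})$, $1 \leq i \leq s-2$, has a controllable ratio tied to the itinerary $k(a_i,b_i) = (a_i + a_{i+2})/a_{i+1}$. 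This asymmetry is exactly what makes the boundary term pull ahead of the interior $g$-sum.

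The concrete strategy is to apply the local-maximum elimination scheme from Lemma \ref{80}, Lemma \ref{59}, and Theorem \ref{60} to the generalized arithmetic sequence $(a_{-1}, a_0, a_1, \ldots, a_m, a_{m+1})$, successively removing every interior local maximum lying strictly between $a_0$ and $a_m$. Each elimination preserves both the generalized arithmetic property and the value of $h$, so after finitely many steps the full expression for $\zeta_m - \zeta_1$ reduces, via a computation that mirrors equation (\ref{91}) from the proof of Theorem \ref{66}, to an explicit rational function in the handful of surviving terms centered on $a_0$ and $a_m$. The strict inequality then reads off from the fact that $a_0$ is the unique smallest entry in the reduced skeleton, forcing the relevant boundary ratio to be strictly positive; the reverse of the excursion, again an excursion by Corollary \ref{83}, supplies $\zeta_m > \zeta_{s-1}$.

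The main obstacle is that the sub-path $(a_0,b_0), \ldots, (a_m,b_m)$ is typically not itself an excursion --- $a_m$ need not be the smallest among the interior $x$-coordinates it passes through --- so Theorem \ref{66} cannot be applied to the sub-path directly. The elimination procedure is precisely the tool that bypasses this difficulty: by collapsing all interior local maxima of the generalized arithmetic sequence attached to $(a_{-1}, \ldots, a_{m+1})$, one obtains a short ``virtual excursion skeleton'' to which the Theorem \ref{66}-style boundary computation still applies. The delicate point will be to verify that the resulting closed-form expression is strictly of the correct sign uniformly over all choices of excursion and all $m \in [2, s-2]$; I expect this to come down to the elementary observation that for the reduced four- or five-term sequence the relevant differences of the form $(a_{j+1}-a_{j-1})/a_j$ have signs governed by whether $a_j$ is an endpoint (small) or an interior survivor (large).
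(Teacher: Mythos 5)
The paper does not actually supply a proof of Theorem~\ref{84}: the text immediately after the statement reads ``The proof of Theorem~\ref{84} follows a similar argument as the proof of the overall monotonicity of $k-3$ in \cite{me}'', so the argument is deferred to an external preprint and there is no proof in this paper to compare against. You should note that before spending effort reconstructing it.

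As for your blind attempt, the opening reduction is sound: by symmetry of $\hat{k}$ under $(a,b)\mapsto(b,a)$ together with Corollary~\ref{83}, the lower bound $\zeta_{s-1}<\zeta_m$ does follow from the upper bound applied to the reversed excursion, and the identity $\zeta_m-\zeta_1 = h(a_0,\dots,a_{m+1})$ (equivalently your Remark~\ref{92} telescoping form) is correct. You also correctly flag the real difficulty, namely that $(a_0,\dots,a_m)$ is generally not an excursion. The gap is in the proposed fix. Successively removing interior local maxima from $(a_{-1},a_0,a_1,\dots,a_m,a_{m+1})$ does \emph{not} collapse the middle to a four- or five-term ``virtual excursion skeleton.'' After all eliminable local maxima are gone, what survives between $a_0$ and $a_m$ is an \emph{increasing} generalized arithmetic sequence $a_0 < c_1 < \dots < c_k < a_m$, where the $c_j$ are exactly the suffix minima of $(a_1,\dots,a_{m-1})$ against $a_m$; the number $k$ of survivors is unbounded in general. (Contrast this with Theorem~\ref{66}, where every interior $a_i$ exceeds both endpoints, so the elimination really does terminate at a six-term sequence.) Consequently there is no single ``closed-form expression in the surviving terms centered on $a_0$ and $a_m$'' to read a sign off of; one must still control $h$ of an arbitrarily long monotone skeleton, and you give no mechanism for doing so. There is also a secondary technical point: Theorem~\ref{60} only permits eliminating a local maximum at position $m'\in[4,n-3]$, so from $(a_{-1},a_0,\dots,a_{m+1})$ the entries $a_1$ and $a_{m-1}$ are not directly eliminable and extra padding on both ends (as in the proof of Theorem~\ref{66}) would be needed before the scheme even starts. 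In short, your plan is a reasonable first move and correctly isolates the obstruction, but the central step --- passing from the long increasing skeleton to a sign --- is missing, so the proof is incomplete.
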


The proof of Theorem \ref{84} follows a similar argument as the proof of the overall monotonicity of $k-3$ in \cite{me}.

\begin{rrr}
For
$$\iota_{i+1}=\sum_{j=0}^i\left(R\left(T^j(\frac{1}{n},1)\right)-\frac{n^2}{A_n}\right),$$
which we mentioned in (\ref{48}), we attempt to control $\iota_i$ using $\iota_1$ and $\iota_{A_n-1}$. It seems that it can also be controlled and satisfies a similar inequality to the one in Theorem \ref{84}
$$\iota_{A_n-1}<\iota_m<\iota_1$$
for $m\in[2,A_n-2]$.

However, this is very hard to prove. But if we consider the approximated version of $R-\frac{n^2}{A_n}$ which is $\hat{k}-3$, we can find out that $(\iota_m)_{m=0}^{A_n}$ and $\left(\zeta_m\right)_{m=0}^{A_n}$ are very close but we can prove that 1-st term is the upper bound while the $A_n-1$-th term is the lower bound for the latter one.

\end{rrr}

\begin{rrr}    \label{90}
For $s\geq4$, from Theorem \ref{84}, we know that for any $m\in[2,s-2]$, $\zeta_{s-1}<\zeta_m<\zeta_1$.

Since $a_1+a_0>1$, $a_1>a_0$, so $a_1>\frac{1}{2}$, $a_0+a_2<a_1+1<3a_1$. Since $a_1\mid a_0+a_2$, we know that $\frac{a_0+a_2}{a_1}\leq2$. Therefore,
\begin{equation}
\zeta_1=\frac{\frac{a_{-1}+a_1}{a_0}+\frac{a_0+a_2}{a_1}}{2}-3\leq\frac{\frac{2}{a_0}+2}{2}-3=\frac{1}{a_0}-2.    \label{50}
\end{equation}
On the other side, for $\zeta_{s-1}$
$$\zeta_{s-1}=\zeta_s-\left(\hat{k}(a_{s-1},b_{s-1})-3\right)=\zeta_s-\zeta'_1.$$
By Corollary \ref{85}, we know that
$$|\zeta_s|<\frac{a_s}{a_0}+\frac{a_0}{a_s}<\frac{1}{a_0}+\frac{1}{a_s}.$$
By (\ref{50}), we can deduce that $\zeta'_1\leq\frac{1}{a_s}-2$, thus
\begin{equation}
\zeta_{s-1}>-\frac{1}{a_0}-\frac{2}{a_s}+2.     \label{51}
\end{equation}

By (\ref{50}), and (\ref{51}), we have
$$\zeta_m=O\left(\frac{1}{a_0}+\frac{1}{a_s}\right).$$

\end{rrr}

\section{Discretized analog of the RH}
\label{103}
In this section, we will provide the proof of Theorem \ref{55}, which is our main result. We define the energy function in Definition \ref{68}. Subsequently, we use induction on the energy function on the moduli space of excursions to establish a more generalized version of our main result (Theorem \ref{87}). Then, we utilize all the estimates on $\left(\frac{1}{n},\frac{1}{n}\right)$, which correspond to all the excursions that are also full periodic orbits to complete the proof of our main result.

First of all, given that $A_n=\frac{3}{\pi^2}n^2+O(n\log n)$, the main result Theorem \ref{55} is equivalent to Theorem \ref{86}.

\begin{ttt}    \label{86}
For $n\geq1$, $i\geq1$, let $\theta'_i=\hat{k}\left(T^{i-1}(\frac{1}{n},1)\right)-3$ and $\theta_i=\sum\limits_{j=1}^i\theta'_j$. Then, we have
$$\sum\limits_{i=1}^{A_n}|\theta_i|=O(n^{2+\epsilon}).$$
\end{ttt}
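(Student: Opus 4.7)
The plan is to reduce Theorem \ref{86} to the sharper energy estimate on the moduli space $\Xi$ advertised in the introduction, and to obtain that estimate by an induction on excursions.

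First I would identify the periodic orbit of $\left(\frac{1}{n},1\right)$ with a single excursion in $\Xi$. By Lemma \ref{52} one has $T^i\left(\frac{1}{n},1\right)=\left(\frac{q_i}{n},\frac{q_{i+1}}{n}\right)$, where $(q_i)$ denotes the denominator sequence of the Farey fractions of order $n$. Since $q_i>1$ for every $1\leq i\leq A_n-1$ while $q_0=q_{A_n}=1$, the full orbit is precisely the excursion with endpoints $a_0=a_{A_n}=\frac{1}{n}$, that is, the excursion corresponding to $\left(\frac{1}{n},\frac{1}{n}\right)\in\Xi$. Writing $\zeta_m:=\sum_{j=0}^{m-1}\left(\hat{k}\left(T^j\left(\frac{1}{n},1\right)\right)-3\right)$, the $\theta_i$ of the statement coincide with $\zeta_i$, and Theorem \ref{73} gives $|\zeta_{A_n}|=1$; it therefore suffices to prove
\[\sum_{m=1}^{A_n-1}|\zeta_m|=O(n^{2+\epsilon}).\]

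For every excursion $(a_m,b_m)_{m=0}^{s}$ corresponding to $(a,b)\in\Xi$, I would introduce the energy
\[E(\hat{k}-3;a,b):=\sum_{m=1}^{s-1}|\zeta_m|,\]
and establish the refined bound on the golden-ratio subset $\Delta\subset\Xi$ (which contains the diagonal and in particular $\left(\frac{1}{n},\frac{1}{n}\right)$):
\[E(\hat{k}-3;a,b)=O\!\left(\frac{1}{(ab)^{1+\epsilon}}\right).\]
Specialising to $(a,b)=\left(\frac{1}{n},\frac{1}{n}\right)$ then gives $O(n^{2+\epsilon})$, proving Theorem \ref{86}.

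The refined bound would be proved by induction on the complexity of the excursion, measured by its length $s$ (bounded by $1/(ab)$ via (\ref{35})) or equivalently by the primitive-lattice-point count in (\ref{64}). For the inductive step I would split the excursion at a carefully chosen interior index $t$: the defining property of an excursion forces $a_m>\max(a_0,a_s)$ for $0<m<s$, so taking $t$ to realise $\min_{0<m<s}a_m$ yields two strictly shorter excursions, from $(a_0,a_t)$ and from $(a_t,a_s)$, to which the induction hypothesis applies. For indices $m$ lying in the second piece one has $\zeta_m=\zeta_t+(\text{cumulative sum along the second sub-excursion})$, and $|\zeta_t|$ is controlled by the reset estimate Corollary \ref{85}. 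Combining the two inductive bounds with the uniform control on the leading and trailing partial sums furnished by overall monotonicity (Theorem \ref{84}) and Remark \ref{90} would deliver the bound on the full excursion.

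The main obstacle is this splitting step. The ratios $a_t/a_0$ and $a_t/a_s$ produced by the decomposition are not a priori controlled, so the geometric reset contributions $a_t/a_0+a_0/a_t$ of Corollary \ref{85}, once summed over all the recursive splits, threaten to exceed the admissible $n^\epsilon$ factor. The golden-ratio constraint defining $\Delta$ should be tailored precisely to make this telescoping converge, and pieces that escape $\Delta$ will have to be absorbed either by re-balancing the split or by appealing to the unconditional length bound (\ref{35}) and the reset inequality (\ref{39}) directly. Fortunately $\left(\frac{1}{n},\frac{1}{n}\right)$ lies on the symmetry diagonal of $\Xi$, which sits inside $\Delta$, so no boundary case arises at the point where the estimate is ultimately applied.
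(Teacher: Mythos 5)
Your reduction of Theorem \ref{86} to the energy estimate $E|_{\Delta}(\hat{k}-3;a,b)=O\bigl((ab)^{-1-\epsilon}\bigr)$, followed by specialisation to $(a,b)=\left(\tfrac1n,\tfrac1n\right)$, matches the paper exactly (including the use of Theorem \ref{73} to dispose of the endpoint term). The gap is in the inductive step for the energy estimate, and it is the very obstacle you flag but do not overcome.

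Splitting the excursion once, at an interior index $t$ realising $\min_{0<m<s}a_m$, produces two sub-excursions whose moduli points $(a_0,a_t)$ and $(a_t,a_s)$ have uncontrolled ratios: if $a_t$ is of order $1$ while $a_0,a_s$ are small, then $a_t/a_0$ is of order $1/a_0$, far outside the golden-ratio window. Since the inductive hypothesis in Theorem \ref{87} is asserted only on $\Delta$, it simply cannot be invoked for these pieces, and the fallback inequalities (\ref{35}) and (\ref{39}) give only an $O\bigl(1/(a_0a_s)\bigr)$ bound per split, with no mechanism for the recursion to terminate with a subcritical exponent. So this is not a matter of a missing estimate in an otherwise sound scheme; the scheme itself does not close.

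The paper's decomposition is engineered precisely to stay inside $\Delta$. Instead of a single split, the initial segment $[1,t_1]$ of the excursion is cut at the whole family of indices $t_i$ with $a_{t_i}=ia_0+a_s$ for $1\leq i\leq c=\bigl[\frac{1-b}{a}\bigr]$. Each resulting sub-excursion $(a_{t_{i+1}},a_{t_i})$ has ratio $\frac{ia+b}{(i+1)a+b}$, which, because $b/a$ lies in the golden-ratio interval, lies in that interval too; so the inductive hypothesis applies uniformly. The reset control $|\zeta_{t_i}|<i+O(1)$ (via Corollary \ref{85}) and the length bound (\ref{35}) then make the sum over $i$ telescope with the required $D<1$ contraction factor. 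The tail from $t_1$ to $s$ is handled not by a second application of the inductive hypothesis but by the time-reversal symmetry of excursions (Corollary \ref{83}), which identifies its energy with $E(b,a+b)$; the estimate already derived for $E(a,a+b)$ is then reused with $a$ and $b$ interchanged. Without this multi-point decomposition and the reversal trick, the golden-ratio restriction cannot propagate through the induction, and the argument you sketch stalls at exactly the point you flagged.
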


In Theorem \ref{86}, we observe that $\theta_i$ is derived from the period of a periodic orbit, specifically $\left(T^{i-1}\left(\frac{1}{n},1\right)\right)_{i=1}^{A_n}$. By Lemma \ref{52}, we know that it is $\left(\left(\frac{q_{i}}{n},\frac{q_{i+1}}{n}\right)\right)_{i=0}^{A_n-1}$. As we know, this section of a periodic orbit constitutes an excursion (explained in the examples following Definition \ref{74}). So, what about other excursions and functions? Could we define a similar expression to $\sum\limits_{i=1}^{A_n}|\theta_i|$? In other words, from the perspective of the moduli space of excursions, the formula in Theorem \ref{86} is only related to the point $(\frac{1}{n},\frac{1}{n})$ in the moduli space. Could it be related to all the points in the moduli space?

\begin{ddd}    \label{68}    (energy function)

For any $(a,b)\in\Xi=(0,1]^2$, by Lemma \ref{75}, there exists a unique BCZ orbit $T^i(a_0,b_0)=(a_i,b_i)$, $i\in[0,s]$, such that $a_0=a$, $a_s=b$ while from $(a_0,b_0)$ to $(a_s,b_s)$ is an excursion which means that $a_i>a,b$ for $i\in[1,s-1]$. Let $f$ be a function defined on the Farey triangle.
Then, for $i\in[1,s]$, let $\zeta'_i=f\left(T^{i-1}(a_0,b_0)\right)$, $\zeta_i=\sum\limits_{j=1}^i\zeta'_j$. We define the energy function of $f$ as
$$E(f;a,b):=\sum\limits_{i=1}^s|\zeta_i|$$
for $(a,b)\in\Xi$.

\end{ddd}

\begin{ttt}   \label{87}
Let
\begin{equation}
\Delta:=\left\{(a,b)|(a,b)\in\Xi, \frac{b}{a}\in\left[\frac{\sqrt{5}-1}{2},\frac{\sqrt{5}+1}{2}\right]\right\}   \label{7986969}
\end{equation}
be the golden ratio area of $\Xi$.
Then
$$E|_{\Delta}(\hat{k}-3;a,b)=O\left(\frac{1}{(ab)^{1+\epsilon}}\right).$$

In other words, for any $e>1$, there exists $C>0$ such that
$$E(\hat{k}-3;a,b)<C\frac{1}{(ab)^{e}}$$
for all $(a,b)\in\Delta$.

\end{ttt}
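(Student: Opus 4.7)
The goal is to bound $E(\hat k-3;a,b)=\sum_{i=1}^{s}|\zeta_i|$ for the excursion $(a_i,b_i)_{i=0}^s$ corresponding to $(a,b)\in\Delta$. The naive estimate combining the length bound $s\leq 1/(ab)$ from (\ref{35}) with the pointwise bound $|\zeta_m|=O(1/a+1/b)$ from Remark \ref{90} only yields $O(1/(ab)^{3/2})$ inside $\Delta$, so more of the excursion structure must be exploited. My plan is to proceed by strong induction on the length $s$ (equivalently, on $1/(ab)$ via Lemma \ref{65}), working within the whole moduli space $\Xi$ rather than just $\Delta$ and reading off the $\Delta$-statement at the end. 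The base case $s=1$ reduces to $(a,b)\in\Omega\cap\Delta$, in which $a+b>1$ together with $b/a\in[\phi^{-1},\phi]$ bounds $ab$ below by an absolute constant, so the single-term estimate $|\hat k(a,b)-3|=O(1)$ gives the desired bound trivially.

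For the inductive step, I split the excursion at the local minima of the $a$-coordinate sequence: let $0=p_0<p_1<\cdots<p_k<p_{k+1}=s$ enumerate the indices of all local minima of $(a_i)_{i=0}^s$, including the two endpoints. Because every intermediate point of the outer excursion satisfies $a_i>\max(a,b)$ and each $a_{p_j}$ with $0<j\leq k$ is a local minimum, the restricted orbit from $(a_{p_j},b_{p_j})$ to $(a_{p_{j+1}},b_{p_{j+1}})$ is itself an excursion; in the moduli space it corresponds to $(a_{p_j},a_{p_{j+1}})\in\Xi$, has length strictly less than $s$, and satisfies $a_{p_j}a_{p_{j+1}}>ab$, so the inductive hypothesis applies to it. Writing $\zeta_i=\zeta_{p_j}+\tilde\zeta^{(j)}_{i-p_j}$ for $i\in(p_j,p_{j+1}]$, where $\tilde\zeta^{(j)}$ is the partial cocycle of the $j$-th sub-excursion, the triangle inequality gives
$$E(\hat k-3;a,b)\leq\sum_{j=0}^{k}(p_{j+1}-p_j)\,|\zeta_{p_j}|+\sum_{j=0}^{k}E(\hat k-3;a_{p_j},a_{p_{j+1}}).$$
The second sum is handled by the inductive hypothesis. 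For the first, $\zeta_{p_j}$ telescopes into the sum of the $j$ preceding sub-excursion totals, each of which Theorem \ref{66} bounds by $a_{p_{\ell-1}}/a_{p_\ell}+a_{p_\ell}/a_{p_{\ell-1}}$. Combined with the length asymptotics $p_{j+1}-p_j\sim 3/(\pi^2 a_{p_j}a_{p_{j+1}})$ of Lemma \ref{65} and the overall monotonicity of Theorem \ref{84}, both sums turn out to be of the same order, and the geometric series produced by the $\epsilon$-losses at successive recursion levels collapses into a single factor $(ab)^{-\epsilon}$.

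The main obstacle is closing the induction: the ratio $a_{p_{j+1}}/a_{p_j}$ of a sub-excursion's endpoints can easily escape the golden-ratio band, as one already sees in the Farey decomposition of order $5$, where sub-excursions with endpoint ratios $3$ and $1/3$ arise. To deal with this I would strengthen the inductive hypothesis to all of $\Xi$ with a ratio-dependent constant (extractable from Theorem \ref{66} and (\ref{50})--(\ref{51})), while aggregating consecutive sub-excursions whose endpoints straddle the golden-ratio band into a single \emph{balanced} piece before recursing. The golden-ratio threshold defining $\Delta$ is presumably dictated by this combinatorial requirement, via a Stern--Brocot/continued-fraction argument ensuring that the aggregated sub-excursions always remain in a uniformly fattened version of $\Delta$. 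Orchestrating the balance between the length factors $p_{j+1}-p_j\sim 1/(a_{p_j}a_{p_{j+1}})$ and the $\epsilon$-loss per recursion level--so that the latter indeed sums to a single $(ab)^{-\epsilon}$ factor rather than a $\log$-factor that would ruin the bound--is the central technical point I expect to have to fight.
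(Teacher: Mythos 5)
Your high-level plan (recursive decomposition of the excursion, with the reset control of Theorem \ref{66} and the overall monotonicity of Theorem \ref{84} as the main inputs, closed by an induction) matches the paper, but the specific decomposition you choose does not carry the quantitative weight, and you have already flagged the point where it breaks. Decomposing at \emph{all} local minima is too crude: for an arbitrary local minimum $p_j$ the only bound you can extract on $|\zeta_{p_j}|$ is $O(1/a+1/b)=O(d)$ (Theorem \ref{84} plus Remark \ref{90}; the telescoping-through-Theorem-\ref{66} bound does not beat this after the triangle inequality, since nothing cancels). Combined with $\sum_j (p_{j+1}-p_j)=s\lesssim 1/(ab)\approx d^2$, your first sum $\sum_j(p_{j+1}-p_j)|\zeta_{p_j}|$ is only $O(d^3)=O\bigl((ab)^{-3/2}\bigr)$ --- exactly the naive exponent you correctly said must be improved, so this would only establish the theorem for $e>3/2$. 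The paper instead splits at the specific points $t_i$ with lattice coordinates $(u,v)=(i,1)$, i.e.\ $a_{t_i}=ia+b$. For these, Corollary \ref{85} gives $|\zeta_{t_i}|<\frac{a_{t_i}}{a_0}+\frac{a_0}{a_{t_i}}\approx i$ rather than $\approx d$, and the lengths $t_i-t_{i+1}\lesssim 1/\bigl((ia+b)((i+1)a+b)\bigr)$ decay in $i$, so the analogous weighted sum (\ref{19}) is only $O(d^2\log d)$ --- negligible against $d^{2e}$ for every $e>1$. This choice of split points is the whole game, and nothing in your proposal recovers it.

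The second gap is the one you honestly identify but do not resolve: how to keep the sub-excursions inside $\Delta$ so the inductive hypothesis applies. The paper's $t_i$-decomposition solves this automatically, because the sub-excursion $(a_{t_{i+1}},a_{t_i})=\bigl((i+1)a+b,\,ia+b\bigr)$ has endpoint ratio $\frac{i+b/a}{i+1+b/a}$, which lies in $\bigl[\frac{\sqrt5-1}{2},\frac{\sqrt5+1}{2}\bigr]$ for every $i\geq1$ whenever $b/a$ does (the boundary case $i=1$, $b/a=\frac{\sqrt5-1}{2}$ hits $\frac{\sqrt5-1}{2}$ exactly); the segments $[0,t_1]$ and $[t_1,s]$ are \emph{not} handled by the inductive hypothesis at all, but are assembled from the $t_i$-estimates via the orbit reversal of Lemma \ref{82} and Corollary \ref{83}, which turns the second half into $E(b,a+b)$ and lets the argument be applied symmetrically. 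Your ``aggregate consecutive sub-excursions into a balanced piece'' idea, if pushed through, would essentially rediscover these $t_i$, but as stated it is speculative. One further caveat: the claim that consecutive local minima always bound a sub-excursion needs proof (increasing-then-decreasing between them does not by itself force all interior terms to exceed both endpoints); the paper never needs this, since the $t_i$ are constructed to satisfy the excursion property directly from the lattice description (\ref{64}).
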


Before giving the proof of Theorem \ref{87}, we need to establish some lemmas. The notation will be the same as Definition \ref{68}. In the remainder of this section, we will simply use $E(a,b)$ instead of $E(\hat{k}-3;a,b)$ for brevity.

\begin{llll}   \label{88}
If $\max\{\frac{1}{a},\frac{1}{b}\}\leq d$ for $d\geq1$, then
\begin{equation}
E(a,b)\leq2d^5.   \label{28}
\end{equation}

\end{llll}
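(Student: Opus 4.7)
The plan is to derive this bound directly from the crudest pointwise estimate on $\hat{k}-3$ combined with the upper bound $s\le 1/(ab)\le d^{2}$ on the excursion length provided by (\ref{35}). The target $2d^{5}$ is loose enough that no appeal to the refined reset control (Theorem \ref{66}) or overall monotonicity (Theorem \ref{84}) is needed at this stage; those sharper tools will only be called upon in the induction for Theorem \ref{87}.

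First I would verify that every coordinate appearing in the excursion $(a_i,b_i)_{i=0}^{s}$ is bounded below by $1/d$. The endpoints $a_0=a$ and $a_s=b$ satisfy this by hypothesis, and by the very definition of an excursion each interior $a_i$ with $i\in[1,s-1]$ strictly exceeds $\max(a_0,a_s)\ge 1/d$; the BCZ identity $b_i=a_{i+1}$ then propagates the bound to the $y$-coordinates involved in $\hat{k}(a_i,b_i)$ for $i\in[0,s-1]$.

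Next I would record the elementary pointwise bound on $\hat{k}$: for $(x,y)\in\Omega$ with $x,y\in(0,1]$,
\[
\hat{k}(x,y) \;=\; \tfrac12\!\left(\left[\tfrac{1+x}{y}\right]+\left[\tfrac{1+y}{x}\right]\right) \;\le\; \tfrac12\!\left(\tfrac{2}{y}+\tfrac{2}{x}\right) \;=\; \tfrac{1}{x}+\tfrac{1}{y},
\]
while $k,k^{T}\ge 1$ gives $\hat{k}\ge 1$. Combined with the coordinate bound from the previous step, this yields $|\hat{k}(a_i,b_i)-3|\le 2d$ for every $i\in[0,s-1]$, hence $|\zeta_m|\le\sum_{i=1}^{m}|\zeta'_i|\le 2md\le 2sd$.

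Finally, summing over $m$ and invoking (\ref{35}) to control the length,
\[
E(a,b) \;=\; \sum_{m=1}^{s}|\zeta_m| \;\le\; d\,s(s+1) \;\le\; 2d\,s^{2} \;\le\; 2d\cdot d^{4} \;=\; 2d^{5}.
\]
There is essentially no obstacle here: the entire argument is mechanical once one notes that the excursion structure forces every interior coordinate to exceed both endpoints. The small check worth flagging is that the bound $\hat{k}\le 1/x+1/y$ holds uniformly on $\Omega$, but this is immediate from $x,y\le 1$. This crude base estimate is exactly what is needed to initialize the induction on the moduli space of excursions that will sharpen the bound into the asymptotic of Theorem \ref{87}.
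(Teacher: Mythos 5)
Your proof is correct and follows essentially the same route as the paper's: bound each $\hat{k}(a_i,b_i)-3$ pointwise by $2d$ using the fact that the excursion structure forces every $a_i \ge 1/d$, then apply the crude length bound $s\le 1/(ab)\le d^2$ from (\ref{35}) to get $E(a,b)\le 2ds^2\le 2d^5$. The only difference is cosmetic: you make explicit the observation that interior coordinates exceed both endpoints, which the paper leaves implicit.
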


\begin{llll}   \label{89}
If $\max\{\frac{1}{a},\frac{1}{b}\}>\sqrt{5}+2$ and $(a,b)\in\Delta$, we have
\begin{equation}
2a+b<1,   \label{17}
\end{equation}
$$a+2b<1.$$

\end{llll}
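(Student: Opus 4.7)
The plan is to exploit the reciprocal symmetry of $\Delta$: the interval $\left[\frac{\sqrt{5}-1}{2},\frac{\sqrt{5}+1}{2}\right]$ is closed under $x \mapsto 1/x$ (because $\frac{\sqrt{5}-1}{2}\cdot\frac{\sqrt{5}+1}{2}=1$), so $(a,b)\in\Delta$ if and only if $(b,a)\in\Delta$. Likewise, the hypothesis $\max\{1/a,1/b\}>\sqrt{5}+2$ and the two conclusions $2a+b<1$ and $a+2b<1$ are interchanged under swapping $a$ and $b$. Hence I may assume without loss of generality that $a\leq b$, i.e.\ $\max\{1/a,1/b\}=1/a$.

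From the hypothesis $1/a>\sqrt{5}+2$ I would immediately get $a<\frac{1}{\sqrt{5}+2}=\sqrt{5}-2$, after rationalizing the denominator. Since $a\leq b$ and $(a,b)\in\Delta$, the ratio $b/a$ lies in $\left[1,\frac{\sqrt{5}+1}{2}\right]$, so $b\leq\frac{\sqrt{5}+1}{2}\,a$. Plugging this into $a+2b$ gives
\[
a+2b\;\leq\;a+(\sqrt{5}+1)a\;=\;(\sqrt{5}+2)a\;<\;(\sqrt{5}+2)\cdot\frac{1}{\sqrt{5}+2}\;=\;1,
\]
which establishes $a+2b<1$. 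For the remaining inequality, $a\leq b$ gives $2a+b\leq a+2b<1$ for free, so (\ref{17}) follows without further computation.

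I do not expect any real obstacle here; the only thing to double check is the direction of the ratio inequality after the WLOG reduction (making sure that $a\leq b$ yields $b/a\leq\frac{\sqrt{5}+1}{2}$ rather than $b/a\geq\frac{\sqrt{5}-1}{2}$), and the arithmetic identity $\frac{1}{\sqrt{5}+2}=\sqrt{5}-2$ that makes the bound on $a+2b$ sharp. The argument is essentially a one-line chain of inequalities once the symmetry reduction is made.
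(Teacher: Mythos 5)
Your proof is correct and rests on exactly the same estimate as the paper's: since $(a,b)\in\Delta$ forces $\max\{a,b\}\le\frac{\sqrt5+1}{2}\min\{a,b\}$, and the hypothesis gives $\min\{a,b\}<\frac{1}{\sqrt5+2}$, one bounds $2a+b$ and $a+2b$ by $(\sqrt5+2)\min\{a,b\}<1$. The paper writes the bound $\left(1+2\cdot\frac{\sqrt5+1}{2}\right)\min\{a,b\}$ directly for both expressions without a WLOG reduction, whereas you first reduce to $a\le b$, prove $a+2b<1$, and deduce $2a+b\le a+2b<1$ for free; this is a minor stylistic variation, not a different route.
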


Next, we define the sub-excursion, which means an excursion within another excursion.

\begin{ddd}      \label{69}    (sub-excursion)

We call an excursion $(c_j,d_j)_{j=0}^t$ a sub-excursion of an excursion $(a_i,b_i)_{i=0}^s$ if there exists $i_1,i_2\in[0,s]$ such that $a_{i_1}=c_0$, $a_{i_2}=c_t$ and $a_i<a_{i_1},a_{i_2}$ for $i\in(i_1,i_2)$. Then by the uniqueness of the excursion given two endpoints, we know that $i_2=i_1+t$ and $c_j=a_{i_1+j}$.

\end{ddd}

\begin{llll}   \label{70}
For an excursion $(a_i,b_i)_{i=0}^s$ where $a_0=a$,$a_s=b$ and a sub-excursion of its: $(c_j,d_j)_{j=0}^t$ where $a_{i_1}=c_0=c$, $a_{i_2}=c_t=d$, we have
$$\sum\limits_{m=i_1+1}^{i_2}|\zeta_m|\leq(i_2-i_1)|\zeta_{i_1}|+E(c,d).  $$

\end{llll}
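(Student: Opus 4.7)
The plan is to exploit the telescoping/cocycle structure of the partial sums $\zeta_m$ and reduce the inequality to a single application of the triangle inequality. The key observation is that when we restrict attention to indices $m$ in the range $[i_1+1,i_2]$, the partial sums $\zeta_m$ of $\hat k-3$ along the outer excursion differ from the analogous partial sums along the sub-excursion by the constant $\zeta_{i_1}$, independent of $m$.

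More concretely, I would first set $\tilde\zeta'_j:=(\hat k-3)\bigl(T^{j-1}(c_0,d_0)\bigr)$ and $\tilde\zeta_j:=\sum_{k=1}^{j}\tilde\zeta'_k$ for $j\in[1,t]$, so that by definition $E(c,d)=\sum_{j=1}^{t}|\tilde\zeta_j|$. Because $(c_0,d_0)=(a_{i_1},b_{i_1})=T^{i_1}(a_0,b_0)$ and $i_2=i_1+t$, we have $T^{j-1}(c_0,d_0)=T^{i_1+j-1}(a_0,b_0)$, whence $\tilde\zeta'_j=\zeta'_{i_1+j}$. Telescoping then gives
\begin{equation*}
\tilde\zeta_j=\sum_{k=1}^{j}\zeta'_{i_1+k}=\zeta_{i_1+j}-\zeta_{i_1},\qquad j\in[1,t].
\end{equation*}

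With this identification in hand, the triangle inequality yields $|\zeta_{i_1+j}|=|\zeta_{i_1}+\tilde\zeta_j|\le|\zeta_{i_1}|+|\tilde\zeta_j|$ for each $j\in[1,t]$. Summing over $j$ from $1$ to $t=i_2-i_1$ and re-indexing $m=i_1+j$ gives
\begin{equation*}
\sum_{m=i_1+1}^{i_2}|\zeta_m|=\sum_{j=1}^{t}|\zeta_{i_1+j}|\le t\,|\zeta_{i_1}|+\sum_{j=1}^{t}|\tilde\zeta_j|=(i_2-i_1)|\zeta_{i_1}|+E(c,d),
\end{equation*}
which is the desired inequality.

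There is no real obstacle here: the content of the lemma is precisely the cocycle identity $\zeta_{i_1+j}=\zeta_{i_1}+\tilde\zeta_j$ combined with the triangle inequality. The only thing that requires a moment of care is to verify that the sub-excursion's own partial-sum sequence coincides with the shifted tail of the outer sequence, which uses the fact that both are computed with respect to the BCZ map started at the same point $(a_{i_1},b_{i_1})=(c_0,d_0)$. The uniqueness statement in Lemma \ref{75}, together with the definition of a sub-excursion, makes this match automatic.
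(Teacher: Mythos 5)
Your argument is correct and matches the paper's proof essentially verbatim: both decompose $\zeta_{i_1+j}=\zeta_{i_1}+\tilde\zeta_j$ using the cocycle structure (i.e., that the sub-excursion starts at $T^{i_1}(a_0,b_0)$), identify the tail partial sums with those defining $E(c,d)$, and conclude by the triangle inequality. Your write-up is a bit more explicit about the re-indexing $m=i_1+j$, but there is no substantive difference.
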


\begin{proof}[proof](of Theorem \ref{87})

To prove the theorem, we need to show any $e>1$, there exists a constant $C>0$ such that
$$E(a,b)<\frac{C}{(ab)^e}$$
for all $(a,b)\in\Delta$.

Let $D=\left(\frac{\sqrt{5}-1}{6}\right)^{e-1}<1$. This choice of $D$ ensures that a certain inequality holds later in the proof. We then pick $d_1>0$ such that for any $d> d_1$,
\begin{equation}
2d+5d^2+2d^2\log d<\left(\frac{\sqrt{5}-1}{2}\right)^{e}(1-D)(d-1)^{2e}.           \label{26}
\end{equation}
This condition is crucial for bounding $E(a,b)$ in the following proof.

Let $d_2=\sqrt{5}+2$, $d_0=\max\{d_1,d_2\}$.

When $\max\{\frac{1}{a},\frac{1}{b}\}\leq d_0$, by Lemma \ref{88}, we have
\begin{equation}
E(a,b)\leq2d_0^5.     \label{38}
\end{equation}

Let $C=2d_0^5\geq2(\sqrt{5}+2)^5>1$.

Next, we use induction to prove that $E(a,b)<\frac{C}{(ab)^e}$ for $(a,b)\in\Delta$ .

(1) For $(a,b)\in\Delta$, when $\max\{\frac{1}{a},\frac{1}{b}\}\in[1,d_0]$, we have
$$E(a,b)< C\leq \frac{C}{(ab)^e}.$$

(2) Now, assume for $(a,b)\in\Delta$, when $\max\{\frac{1}{a},\frac{1}{b}\}\in[1,d-1]$, we have
$$E(a,b)< \frac{C}{(ab)^e},$$
where $d=d_0+d'$, $d'\in\mathbb{Z}^+$.

We now consider the case when $(a,b)\in\Delta$ and $\max\{\frac{1}{a},\frac{1}{b}\}\in(d-1,d]$.

By Lemma \ref{64}, we know that $\left\{a_i|i\in[1,s-1]\right\}=\{ua_0+va_s|u,v\in\mathbb{Z}^+,(u,v)=1,ua_0+va_s\leq1\}$. For $i\in[1,s-1]$, we let $a_i=u_ia_0+v_ia_s$, with $u_i,v_i\in\mathbb{Z}^+$ and $(u_i,v_i)=1$. Moreover, we know that $\frac{v_i}{u_i}<\frac{v_j}{u_j}$ for $1\leq i<j\leq s-1$.

Let $t_i\in[1,s-1]$ be the number such that $(u_{t_i},v_{t_i})=(i,1)$ for $1\leq i\leq[\frac{1-b}{a}]$.

Since $\max\{\frac{1}{a},\frac{1}{b}\}>d-1\geq d_0\geq d_2$, by (\ref{17}) of Lemma \ref{89}, we know that $[\frac{1-b}{a}]\geq2$. Let $c=[\frac{1-b}{a}]$.

For any $i\in[1,s-1]$, $u_i\leq[\frac{1-b}{a}]=c=u_{t_c}$, which means that $\frac{v_{t_c}}{u_{t_c}}=\frac{1}{u_{t_c}}\leq\frac{v_i}{u_i}$. Thus, we know that $t_c=1$.

For any $j\in(0,t_i)$ where $i\in\left[1,[\frac{1-b}{a}]\right]$, $\frac{v_j}{u_j}<\frac{1}{i}$, so $v_j\geq1$, $u_j>i$, $a_j>a_{t_i}$. So it is an excursion from $(a_0,b_0)$ to $(a_{t_i},b_{t_i})$, by Corollary \ref{85}, we have
$$|\zeta_{t_i}|<\frac{a_{t_i}}{a_0}+\frac{a_0}{a_{t_i}},$$
so
\begin{equation}
|\zeta_{t_i}|<\frac{a_{t_i}}{a_0}+\frac{a_0}{a_{t_i}}=\frac{ia+b}{a}+\frac{a}{ia+b}<i+\frac{\sqrt{5}+3}{2}.   \label{3}
\end{equation}

Besides, by (\ref{35}), we know that
\begin{equation}
t_i<\frac{1}{a_0a_{t_i}}=\frac{1}{a(ia+b)}.     \label{23}
\end{equation}

By (\ref{23}), when $i=1$, we have
\begin{equation}
t_1<\frac{1}{a(a+b)}                      \label{24}
\end{equation}

On the other side, for $i\in(t_1,s)$, $a_i=u_ia+v_ib>a+b=a_{t_1}>b=a_s$, so it is also an excursion from $(a_{t_1},b_{t_1})$ to $(a_s,b_s)$ which means that
\begin{equation}
s-t_1<\frac{1}{a_sa_{t_1}}=\frac{1}{b(a+b)}.                      \label{25}
\end{equation}

For any $j\in(t_{i+1},t_i)$ where $i\in[1,c)$, $\frac{1}{i+1}<\frac{v_j}{u_j}<\frac{1}{i}$, so $v_j>1$, $u_j>i+1$, $a_j>a_{t_{i+1}},a_{t_i}$. So it is an excursion from $(a_{t_{i+1}},b_{t_{i+1}})$ to $(a_{t_i},b_{t_i})$, by Definition \ref{68}, we know that
$$E(a_{t_{i+1}},a_{t_i})=\sum\limits_{m=t_{i+1}}^{t_i-1}\left|\sum\limits_{j=t_{i+1}}^{m}(\hat{k}(a_j,b_j)-3)\right|,$$
since
$$\frac{b}{a}\in\left[\frac{\sqrt{5}-1}{2},\frac{\sqrt{5}+1}{2}\right],$$
we have
$$\frac{ia+b}{(i+1)a+b}\in\left[\frac{\sqrt{5}-1}{2},\frac{\sqrt{5}+1}{2}\right]$$
for $i\geq1$.

Since
$$\max\left\{\frac{1}{(i+1)a+b},\frac{1}{ia+b}\right\}\leq\frac{1}{a+b}<\frac{d}{2}<d-1,$$
by our hypothesis, we have
\begin{equation}
E(a_{t_{i+1}},a_{t_i})<\frac{C}{(((i+1)a+b)(ia+b))^e}.     \label{4}
\end{equation}

Since it is an excursion from $(a_{t_{i+1}},b_{t_{i+1}})$ to $(a_{t_i},b_{t_i})$, by Lemma \ref{70}, we obtain
\begin{align}
\sum\limits_{m=t_{i+1}+1}^{t_i}|\zeta_m|\leq(t_i-t_{i+1})|\zeta_{t_{i+1}}|+E(a_{t_{i+1}},a_{t_i}).      \label{5}
\end{align}

Using (\ref{5}), we derive
\begin{align}
\sum\limits_{m=t_c+1}^{t_1}|\zeta_m|=&\sum_{i=1}^{c-1}\sum\limits_{m=t_{i+1}+1}^{t_i}|\zeta_m| \nonumber \\
\leq&\sum_{i=1}^{c-1}\left[(t_i-t_{i+1})|\zeta_{t_{i+1}}|+E(a_{t_{i+1}},a_{t_i})\right].    \label{6}
\end{align}

Furthermore, by (\ref{35}), we know that
\begin{equation}
t_i-t_{i+1}<\frac{1}{a_{t_i}a_{t_{i+1}}}=\frac{1}{(ia+b)((i+1)a+b)}.       \label{22}
\end{equation}

For $\sum\limits_{i=1}^{c-1}(t_i-t_{i+1})|\zeta_{t_{i+1}}|$, by (\ref{3}),(\ref{24}), and (\ref{22}), we have
\begin{align}
\sum_{i=1}^{c-1}(t_i-t_{i+1})|\zeta_{t_{i+1}}|&<\sum_{i=1}^{c-1}(t_i-t_{i+1})\left(i+1+\frac{\sqrt{5}+3}{2}\right) \nonumber \\ \nonumber
&=\sum_{i=1}^{c-1}(t_i-t_{i+1})i+(t_1-t_c)\left(\frac{\sqrt{5}+5}{2}\right)\\ \nonumber
&<\sum_{i=1}^{c-1}\frac{1}{(ia+b)((i+1)a+b)}i+\frac{1}{a(a+b)}\left(\frac{\sqrt{5}+5}{2}\right)\\ \nonumber
&<\sum_{i=1}^{c-1}\frac{1}{(i+1)a^2}+\frac{4}{a(a+b)}\\
&<\frac{\log c}{a^2}+\frac{4}{a(a+b)}.     \label{19}
\end{align}

For $\sum\limits_{i=1}^{c-1}E(a_{t_{i+1}},a_{t_i})$, since for $i\geq1$,
$$\frac{(ab)^{e-1}}{\left(\left((i+1)a+b\right)(ia+b)\right)^{e-1}}<\left(\frac{ab}{(2a+b)(a+b)}\right)^{e-1},$$
$$=\left(\frac{1}{\frac{2a}{b}+3+\frac{b}{a}}\right)^{e-1}\leq\left(\frac{1}{3+\frac{3(\sqrt{5}-1)}{2}}\right)^{e-1}=D.$$

Thus by (\ref{4}), we have
\begin{align}
\sum\limits_{i=1}^{c-1}E(a_{t_{i+1}},a_{t_i})&<\sum\limits_{i=1}^{c-1}\frac{C}{(((i+1)a+b)(ia+b))^e} \nonumber \\ \nonumber
&=\frac{C}{(ab)^{e-1}}\sum\limits_{i=1}^{c-1}\frac{(ab)^{e-1}}{(((i+1)a+b)(ia+b))^e}\\ \nonumber
&<\frac{C}{(ab)^{e-1}}\sum\limits_{i=1}^{c-1}\frac{D}{(((i+1)a+b)(ia+b))}\\ \nonumber
&=\frac{CD}{(ab)^{e-1}a}\sum\limits_{i=1}^{c-1}\left(\frac{1}{ia+b}-\frac{1}{((i+1)a+b)}\right)\\ \nonumber
&=\frac{CD}{(ab)^{e-1}a}\left(\frac{1}{a+b}-\frac{1}{ca+b}\right)\\ \nonumber
&=\frac{CD(c-1)}{(ab)^{e-1}(a+b)(ca+b)}\\
&<\frac{CD}{(ab)^{e-1}(a+b)a}.     \label{21}
\end{align}

Then by (\ref{6}),(\ref{19}), and (\ref{21}), we obtain
\begin{equation}
\sum\limits_{m=t_c+1}^{t_1}|\zeta_m|<\frac{\log c}{a^2}+\frac{4}{a(a+b)}+\frac{CD}{(ab)^{e-1}(a+b)a}.   \label{7}
\end{equation}

By Corollary \ref{85}, we have
\begin{equation}
|\zeta_1|=\left|\hat{k}(a_0,b_0)-3\right|<\frac{a_0}{b_0}+\frac{b_0}{a_0}<\frac{2b_0}{a_0}\leq\frac{2}{a}.  \label{20}
\end{equation}

Since $t_c=1$ and $c=\left[\frac{1-b}{a}\right]\leq\frac{1-b}{a}<\frac{1}{a}$, using (\ref{7}) and (\ref{20}), we obtain
\begin{align}
\sum\limits_{m=1}^{t_1}|\zeta_m|=&\sum\limits_{m=1}^{t_c}|\zeta_m|+\sum\limits_{m=t_c+1}^{t_1}|\zeta_m| \nonumber \\ \nonumber
<&\frac{2}{a}+\frac{\log c}{a^2}+\frac{4}{a(a+b)}+\frac{CD}{(ab)^{e-1}(a+b)a}\\ \nonumber
<&d+\frac{\log\frac{1}{a}}{a^2}+\frac{4}{a(a+b)}+\frac{CD}{(ab)^{e-1}(a+b)a}\\
\leq&d+d^2\log d+2d^2+\frac{CD}{(ab)^{e-1}(a+b)a}.     \label{10}
\end{align}

For $|\zeta_m|$ where $m\in[t_1+1,s-1]$, we have
$$|\zeta_m|=\left|\zeta_s-\sum\limits_{j=m}^{s-1}\left(\hat{k}(a_j,b_j)-3\right)\right|\leq|\zeta_s|+\left|\sum\limits_{j=m}^{s-1}\left(\hat{k}(a_j,b_j)-3\right)\right|.$$

Since it is an excursion from $(a_0,b_0)$ to $(a_s,b_s)$, by Theorem \ref{66}, we have
$$\zeta_s\in\left(\frac{a}{b}+\frac{b}{a}-4,\frac{a}{b}+\frac{b}{a}-2\right),$$
Given that $\frac{b}{a}\in\left[\frac{\sqrt{5}-1}{2},\frac{\sqrt{5}+1}{2}\right]$, it follows that $\frac{a}{b}+\frac{b}{a}\in\left[2,\sqrt{5}\right]$, thus
\begin{equation}
\zeta_s\in\left(-2,\sqrt{5}-2\right).        \label{11}
\end{equation}

By Lemma \ref{82}, we have
$$T^j(b_{s-1},a_{s-1})=(b_{s-1-j},a_{s-1-j})$$
for $j\in\mathbb{Z}$.

Moreover, since $\hat{k}(a_m,b_m)=\hat{k}(b_m,a_m)$, we can get
\begin{align}
\sum\limits_{m=t_1+1}^{s-1}\left|\sum\limits_{j=m}^{s-1}\left(\hat{k}(a_j,b_j)-3\right)\right|&=\sum\limits_{m=t_1+1}^{s-1}\left|\sum\limits_{j=m}^{s-1}\left(\hat{k}(b_j,a_j)-3\right)\right| \nonumber \\ \nonumber
&=\sum\limits_{m=0}^{s-t_1-2}\left|\sum\limits_{j=0}^{m}\left(\hat{k}\left(T^j(b_{s-1},a_{s-1})\right)-3\right)\right|\\
&\leq\sum\limits_{m=0}^{s-t_1-1}\left|\sum\limits_{j=0}^{m}\left(\hat{k}\left(T^j(b_{s-1},a_{s-1})\right)-3\right)\right|.    \label{12}
\end{align}

As it is an excursion from $(a_{t_1},b_{t_1})$ to $(a_s,b_s)$, by Corollary \ref{83}, it is also an excursion from $(b_{s-1},a_{s-1})$ to $(b_{t_1-1},a_{t_1-1})$. Given that $b_{s-1}=b$ and $b_{t_1-1}=a+b$, we have
\begin{equation}
E(b,a+b)=\sum\limits_{m=0}^{s-t_1-1}\left|\sum\limits_{j=0}^{m}\left(\hat{k}\left(T^j(b_{s-1},a_{s-1})\right)-3\right)\right|.    \label{13}
\end{equation}

Since the orbit from $(a_0,b_0)$ to $(a_{t_1},b_{t_1})$ is an excursion, with $a_0=a$ and $a_{t_i}=a+b$, we know that $E(a,a+b)=\sum\limits_{m=1}^{t_1}|\zeta_m|.$
Therefore by (\ref{10}), we obtain
\begin{equation}
E(a,a+b)<d+d^2\log d+2d^2+\frac{CD}{(ab)^{e-1}(a+b)a}.      \label{33}
\end{equation}

Note that (\ref{33}) is only valid under the assumption that $\max\{\frac{1}{a},\frac{1}{b}\}\in[1,d-1]$, which means that we can exchange $a$ and $b$ in (\ref{33}), it will still hold. Thus, we have
\begin{equation}
E(b,a+b)<d+d^2\log d+2d^2+\frac{CD}{(ab)^{e-1}(a+b)b}.       \label{14}
\end{equation}

Therefore combining (\ref{11}),(\ref{12}),(\ref{13}), and (\ref{14}), we have
\begin{align}
\sum\limits_{m=t_1+1}^s|\zeta_m|<&(s-t_1)|\zeta_s|+\sum\limits_{m=t_1+1}^{s-1}\left|\sum\limits_{j=m}^{s-1}\left(\hat{k}(a_j,b_j)-3\right)\right| \nonumber \\ \nonumber
<&2(s-t_1)+\sum\limits_{m=0}^{s-t_1-1}\left|\sum\limits_{j=0}^{m}\left(\hat{k}\left(T^j(b_{s-1},a_{s-1})\right)-3\right)\right|\\ \nonumber
=&2(s-t_1)+E(b,a+b)\\
<&2(s-t_1)+d+d^2\log d+2d^2+\frac{CD}{(ab)^{e-1}(a+b)b}.   \label{15}
\end{align}

By (\ref{25}), we know that $2(s-t_1)<\frac{2}{(a+b)b}\leq d^2$. Thus, by (\ref{10}) and (\ref{15}), we obtain
\begin{align}
E(a,b)=&\sum\limits_{i=1}^s|\zeta_i| \nonumber \\ \nonumber
<&d+d^2\log d+2d^2+\frac{CD}{(ab)^{e-1}(a+b)a}+2(s-t_1)\\ \nonumber
&+d+d^2\log d+2d^2+\frac{CD}{(ab)^{e-1}(a+b)b} \\
<&2d+5d^2+2d^2\log d+\frac{CD}{(ab)^e}.                       \label{27}
\end{align}

Since $d> d_0\geq d_1$, by (\ref{26}), we have
$$E(a,b)<2d+5d^2+2d^2\log d+\frac{CD}{(ab)^e}<\left(\frac{\sqrt{5}-1}{2}\right)^{e}(1-D)(d-1)^{2e}+\frac{CD}{(ab)^e}.$$

Since $\max\left\{\frac{1}{a},\frac{1}{b}\right\}>d-1$, it follows that $\min\left\{\frac{1}{a},\frac{1}{b}\right\}>\frac{(\sqrt{5}-1)(d-1)}{2}$. This means that
$$\frac{1}{ab}>\frac{(\sqrt{5}-1)(d-1)^2}{2}.$$

Furthermore, given that $C>1$, we can conclude that
$$E(a,b)<\frac{1-D}{(ab)^e}+\frac{CD}{(ab)^e}<\frac{C(1-D)}{(ab)^e}+\frac{CD}{(ab)^e}=\frac{C}{(ab)^e},$$
which completes the induction.\\

\end{proof}

\begin{proof}[proof] (of Theorem \ref{86})

Since $(\frac{1}{n},\frac{1}{n})\in\Delta$, by Theorem \ref{87}, let $a=b=\frac{1}{n}$, we obtain
$$E\left(\frac{1}{n},\frac{1}{n}\right)=O(n^{2+\epsilon}).$$

As we know, from $(\frac{1}{n},1)$ to $T^{A_n}(\frac{1}{n},1)=(\frac{1}{n},1)$ is an excursion, so by definition of energy function $E$, we have
$$E\left(\frac{1}{n},\frac{1}{n}\right)=\sum\limits_{i=1}^{A_n}|\theta_i|.$$
Therefore
$$\sum\limits_{i=1}^{A_n}|\theta_i|=O(n^{2+\epsilon}).$$\\

\end{proof}

\begin{proof}[proof](of Lemma \ref{88})

When $\max\{\frac{1}{a},\frac{1}{b}\}\leq d$,
$$k(a_{i-1},a_{i})=\left[\frac{1+a_{i-1}}{a_{i}}\right]\leq\frac{2}{a_{i}}\leq2d,$$
$$k(a_i,a_{i-1})=\left[\frac{1+a_i}{a_{i-1}}\right]\leq\frac{2}{a_{i-1}}\leq2d$$
for $i\in[1,s]$.

Thus,
$$\zeta'_i=\hat{k}\left(T^{i-1}(a_0,b_0)\right)-3=\hat{k}\left(T^{i-1}(a_{i-1},a_i)\right)-3,$$
$$=\frac{k(a_{i-1},a_{i})+k(a_i,a_{i-1})}{2}-3\in[-2,2d-3]$$
for $i\in[1,s]$.

Therefore,
$$|\zeta_i|=\left|\sum\limits_{j=1}^i\zeta'_j\right|\leq\sum\limits_{j=1}^i|\zeta'_j|<2id\leq2sd$$
for $i\in[1,s]$.

By (\ref{35}), we have $s<\frac{1}{ab}$, so
$$E(a,b)=\sum\limits_{i=1}^s|\zeta_i|<2s^2d<\frac{2d}{a^2b^2}\leq2d^5.$$

\end{proof}

\begin{proof}[proof](of Lemma \ref{89})

When $\max\{\frac{1}{a},\frac{1}{b}\}>\sqrt{5}+2$, we have
$$2a+b\leq\left(1+2\left(\frac{\sqrt{5}+1}{2}\right)\right)\min\{a,b\}<\frac{\sqrt{5}+2}{\sqrt{5}+2}=1,$$
$$a+2b\leq\left(1+2\left(\frac{\sqrt{5}+1}{2}\right)\right)\min\{a,b\}<\frac{\sqrt{5}+2}{\sqrt{5}+2}=1.$$

\end{proof}

\begin{proof}[proof](of Lemma \ref{70})

By definition, for $m\in[i_1+1,i_2]$, we have
$$\zeta_m=\zeta_{i_1}+\sum\limits_{j=i_1}^{m-1}\left(\hat{k}(a_j,b_j)-3\right).$$

From Definition \ref{69}, we know that
$$\sum\limits_{m=i_1+1}^{i_2}\left|\sum\limits_{j=i_1}^{m-1}\left(\hat{k}(a_j,b_j)-3\right)\right|=\sum\limits_{m=1}^{t}\left|\sum\limits_{j=1}^{m-1}\left(\hat{k}(c_j,d_j)-3\right)\right|
=E(c,d).$$

Therefore,
\begin{align*}
\sum\limits_{m=i_1+1}^{i_2}|\zeta_m|&\leq\sum\limits_{m=i_1+1}^{i_2}\left(|\zeta_{i_1}|+\left|\sum\limits_{j=i_1}^{m-1}\left(\hat{k}(a_j,b_j)-3\right)\right|\right)\\
&=(i_2-i_1)|\zeta_{i_1}|+E(c,d).
\end{align*}
\end{proof}

\section{Sufficient Conditions for General Case}
\label{100}

In \S4, we obtain two properties of the discretized approximation function $\hat{k}-3$, which demonstrate that we have good control over it. Given
$$\zeta_m=\sum_{i=0}^{m-1}\left(\hat{k}\left(T^i(a_0,b_0)\right)-3\right),$$
we establish the following:

1. Reset control: Corollary \ref{85} asserts that
$$\zeta_s=O\left(\frac{a_s}{a_0}+\frac{a_0}{a_s}\right)$$
holds for an excursion from $(a_0,b_0)$ to $(a_s,b_s)$.

2: Overall monotonicity: Theorem \ref{84} indicates that
$$\zeta_{s-1}<\zeta_m<\zeta_1$$
holds for an excursion from $(a_i,b_i)_{i=0}^s$ with $m\in[2,s-2]$.

Additionally, Remark \ref{90} provides
$$\zeta_m=O\left(\frac{1}{a_0}+\frac{1}{a_s}\right).$$

The reset control serves as the key tool in the proof of our main result. Theorem \ref{86} establishes that for $\hat{k}-3$, we have $\sum\limits_{i=1}^{A_n}|\theta_i|=O(n^{2+\epsilon})$. By (\ref{43}), it suffices to prove that for $R-\frac{n^2}{A_n}$, we have $\sum\limits_{i=1}^{A_n}|\iota_i|=O\left(n^{\frac{5}{2}+\epsilon}\right)$ to verify the RH. This prompts the question: if a function satisfies weaker conditions than $\hat{k}-3$, can we still obtain weaker control over the corresponding energy function? The following theorems address this inquiry.

\subsection{Results and examples}

\begin{ttt}   \label{56}
For a function $g$ defined on the Farey triangle, let $\zeta'_i=g\left(T^{i-1}(a_0,b_0)\right)$ and $\zeta_i=\sum\limits_{j=1}^i\zeta'_j$, if
there exists $\alpha\geq1$ and $C_1>0$ such that for any excursion from $(a_0,b_0)$ to $(a_s,b_s)$,
$$|\zeta_s|<C_1\left(\left(\frac{a_0}{a_s}\right)^\alpha+\left(\frac{a_s}{a_0}\right)^\alpha\right)$$
holds, then we have
\begin{equation}
E(g;a_0,a_s)=\sum\limits_{i=1}^{s}|\zeta_i|=O\left(\left(\frac{1}{a_0a_s}\right)^{\frac{\alpha+1}{2}+\epsilon}\right)  \label{93}
\end{equation}
for $(a_0,a_s)\in\Delta$.

\end{ttt}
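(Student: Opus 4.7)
The plan is to mirror the induction-on-depth argument from the proof of Theorem \ref{87}, replacing the reset bound of Corollary \ref{85} by the hypothesized bound on $|\zeta_s|$ at every occurrence and tracking how the exponents propagate. Set $\beta := \frac{\alpha + 1}{2}$, fix a small $\epsilon > 0$, and aim to prove by induction on $\lfloor d \rfloor$, where $d := \max\{1/a_0, 1/a_s\}$, that there exists $C > 0$ (depending on $\alpha$, $C_1$, $\epsilon$) such that $E(g; a_0, a_s) < C / (a_0 a_s)^{\beta + \epsilon}$ for all $(a_0, a_s) \in \Delta$. For the base case $d \leq d_0$, every point $(a_i, b_i)$ on the excursion satisfies $\min(a_i, b_i) \geq 1/d_0$; applying the hypothesis to each length-one sub-excursion gives $|g(a_i, b_i)| \leq 2 C_1 d_0^{2\alpha}$, and combined with $s \leq 1/(a_0 a_s) \leq d_0^2$ from (\ref{35}) this yields $E \leq 2 C_1 d_0^{4 + 2\alpha}$.

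For the inductive step with $d \in (d - 1, d]$, decompose the excursion exactly as in the proof of Theorem \ref{87} via the intermediate points $a_{t_i} = i a_0 + a_s$, $i = 1, \ldots, c := \lfloor (1 - a_s)/a_0 \rfloor$. The hypothesis gives $|\zeta_{t_i}| < C_1 (i + a_s/a_0)^\alpha + C_1 = O(i^\alpha)$, and applied to the single-step excursion at $(a_0, b_0)$ yields $|\zeta_1| = O(d^\alpha)$. All sub-excursion energies $E(g; a_{t_{i+1}}, a_{t_i})$ fall under the induction hypothesis since $\max(1/a_{t_i}, 1/a_{t_{i+1}}) \leq 1/(a_0 + a_s) < d - 1$ on $\Delta$. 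Substituting these bounds into the telescoping argument of (\ref{6})--(\ref{21}), the dominant new polynomial contribution comes from $\sum_i (t_i - t_{i+1}) |\zeta_{t_{i+1}}|$, which is bounded by a constant multiple of $a_0^{-2} \sum_{i=1}^c i^{\alpha - 2}$. This sum is $O(d^{\alpha + 1})$ for $\alpha > 1$ and $O(d^2 \log d)$ for $\alpha = 1$; in both cases it is $O(d^{\alpha + 1} \log d)$.

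Handling $\sum_{m = t_1 + 1}^{s} |\zeta_m|$ via the reverse-excursion Corollary \ref{83} and exchanging $a_0$ with $a_s$, exactly as in the original proof, yields the overall bound
\[
E(g; a_0, a_s) < C_3\, d^{\alpha + 1} \log d + \frac{C D}{(a_0 a_s)^{\beta + \epsilon}},
\]
where $D = \bigl( a_0 a_s / ((2 a_0 + a_s)(a_0 + a_s)) \bigr)^{\beta + \epsilon - 1}$ is uniformly bounded away from $1$ on $\Delta$, and $C_3$ depends only on $\alpha$ and $C_1$. On $\Delta$ one has $1/(a_0 a_s) \geq \frac{\sqrt{5} - 1}{2}(d - 1)^2$, so $1/(a_0 a_s)^{\beta + \epsilon}$ is at least a constant times $(d - 1)^{\alpha + 1 + 2\epsilon}$, and the ratio $d^{\alpha + 1} \log d / (d - 1)^{\alpha + 1 + 2\epsilon}$ behaves like $(\log d)/d^{2\epsilon}$ and tends to $0$ as $d \to \infty$. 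Choosing $d_0$ large enough to absorb the log factor into the $\epsilon$-slack, and then $C$ large enough so that the base-case bound holds and $C \geq 1$, the induction closes.

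The main obstacle is the bookkeeping for the sum $\sum_i (t_i - t_{i+1}) |\zeta_{t_{i+1}}|$ when $\alpha > 1$: this is precisely where the exponent $\beta = (\alpha + 1)/2$ is forced, and why the extra $\epsilon$ appears in the statement. Verifying that $D$ stays uniformly below $1$ on all of $\Delta$ and that the polynomial constant $C_3$ is independent of the induction constant $C$ are the two delicate points needed to guarantee that a single $C$ serves for every $(a_0, a_s) \in \Delta$; once these are checked the rest is a direct analog of the $\alpha = 1$ case.
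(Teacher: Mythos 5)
Your proposal is correct and follows essentially the same inductive-on-depth argument that the paper itself uses for Theorem~\ref{56}: the key observation that the hypothesized bound $|\zeta_s| \lesssim (a_0/a_s)^\alpha + (a_s/a_0)^\alpha$ slots in everywhere Corollary~\ref{85} is used in the proof of Theorem~\ref{87}, the decomposition via the points $a_{t_i} = ia_0 + a_s$, the reverse-excursion trick via Corollary~\ref{83}, and the observation that the dominant polynomial term $\sum_i(t_i - t_{i+1})|\zeta_{t_{i+1}}| \lesssim a_0^{-2}\sum_i i^{\alpha-2}$ forces the exponent $(\alpha+1)/2$. Your treatment unifies the paper's two cases ($\alpha = 1$ giving $O(d^2\log d)$ and $\alpha > 1$ giving $O(d^{\alpha+1})$) into a single bound $O(d^{\alpha+1}\log d)$ absorbed by the $\epsilon$-slack, which is a harmless simplification; a couple of constants in your base case (e.g.\ $2C_1 d_0^{2\alpha}$ in place of $2C_1 d_0^{\alpha}$ for the single-step bound, hence $d_0^{4+2\alpha}$ in place of the paper's $d_0^{4+\alpha}$) are looser than necessary but are absorbed into $C$ and do not affect the argument.
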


According to Remark \ref{92}, when
$$g(a,b)=\frac{a}{b}+\frac{b}{a}-3,$$
the function satisfies the condition of Theorem \ref{56} because
\begin{equation}
\zeta_s=\frac{a_0}{a_s}+\frac{a_s}{a_0}-3, \label{94}
\end{equation}
therefore, (\ref{93}) applies to $g(a,b)=\frac{a}{b}+\frac{b}{a}-3$.

Furthermore, for any $\lambda\in\mathbb{R}$, let
\begin{align}
g_{\lambda}(a,b)&=\lambda\left(\frac{a_1}{a_0}+\frac{a_0}{a_1}\right)+(1-\lambda)\left(\frac{a_{-1}}{a_0}+\frac{a_2}{a_1}\right)-3 \nonumber \\ \nonumber
&=\frac{\lambda a_1+(1-\lambda)a_{-1}}{a_0}+\frac{\lambda a_0+(1-\lambda)a_2}{a_1}-3\\
&=(2\lambda-1)\left(\frac{a_1}{a_0}+\frac{a_0}{a_1}-3\right)+(2-2\lambda)\left(\frac{\frac{a_{-1}+a_1}{a_0}+\frac{a_0+a_2}{a_1}}{2}-3\right).  \label{98}
\end{align}

By Theorem \ref{66}, (\ref{94}), and (\ref{98}), we know that
$$\zeta_s\in\left((2\lambda-1)(\frac{a_s}{a_0}+\frac{a_0}{a_s})-3-|2-2\lambda|,(2\lambda-1)(\frac{a_s}{a_0}+\frac{a_0}{a_s})-3+|2-2\lambda|\right),$$
therefore, $g_{\lambda}(a,b)$ also satisfies the condition of Theorem \ref{56}, which means that (\ref{93}) applies to $g_{\lambda}(a,b)$ as well.

In fact, $\forall \lambda_1,\lambda_2$, $\lambda_1\left(\frac{a_1}{a_0}+\frac{a_0}{a_1}-3\right)+\lambda_2\left(\frac{\frac{a_{-1}+a_1}{a_0}+\frac{a_0+a_2}{a_1}}{2}-3\right)$ satisfies the condition of Theorem \ref{56}.

More generally, the set of all functions that satisfy the conditions of Theorem \ref{56} forms a linear function space.

\begin{ttt}      \label{57}
For a function $g$ defined on the Farey triangle, let $\zeta'_i=g(T^{i-1}(a_0,b_0))$ and $\zeta_i=\sum\limits_{j=1}^i\zeta'_j$. If the following conditions hold:

(1): There exists $\alpha>1$ and $C_1>0$ such that for any excursion from $(a_0,b_0)$ to $(a_s,b_s)$, we have
$$|\zeta_s|<C_1\left(\left(\frac{a_0}{a_s}\right)^\alpha+\left(\frac{a_s}{a_0}\right)^\alpha\right).$$

(2): There exists $0<\beta<\alpha, C_2>0$, and $\gamma>0$ such that for any excursion from $(a_0,b_0)$ to $(a_s,b_s)$ where $(a_0,a_s)\in\Delta$,
$$|\zeta_i|<C_2\left(\left(\frac{1}{a_0}\right)^\beta+\left(\frac{1}{a_s}\right)^\beta\right)$$
holds for any $i\in\left[1,\gamma\left(\frac{1}{a_0}\right)^{2-\frac{\beta}{\alpha}}\right)$.

Then we have
$$E(g;a_0,a_s)=\sum\limits_{i=1}^{s}|\zeta_i|=O\left(\left(\frac{1}{a_0a_s}\right)^{1+\frac{\beta}{2}(1-\frac{1}{\alpha})+\epsilon}\right)$$
for $(a_0,a_s)\in\Delta$.

\end{ttt}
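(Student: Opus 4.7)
The proof adapts the inductive structure of Theorem~\ref{87}, inducting on $d = \max\{1/a_0, 1/a_s\}$ over the golden-ratio region $\Delta$ to establish $E(g;a_0,a_s) < C/(a_0 a_s)^e$ with $e = 1 + \tfrac{\beta}{2}(1 - 1/\alpha) + \epsilon$. For the base case, when $d$ is bounded by some fixed $d_0$, I would adapt Lemma~\ref{88}: condition~(1) applied to each length-one excursion from $(a_{i-1}, b_{i-1})$ to $(a_i, b_i)$ gives $|\zeta'_i| \leq 2 C_1 d^\alpha$, which together with the upper bound $s \leq 1/(ab)$ from (\ref{35}) yields an absolute bound on $E$ sufficient to start the induction. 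With the base case in hand, fix $(a, b) \in \Delta$ with $\max\{1/a, 1/b\} \in (d-1, d]$, let $(a_i, b_i)_{i=0}^{s}$ be the corresponding excursion, and split the energy as $\sum_{m=1}^{t_1}|\zeta_m| + \sum_{m=t_1+1}^{s}|\zeta_m|$, where $t_1$ is the first time the orbit hits the level $a+b$.

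For the first stretch I would reuse the sub-excursion decomposition of Theorem~\ref{87}: setting $t_i$ to be the unique index with $(u_{t_i},v_{t_i})=(i,1)$, so that $a_{t_i} = ia+b$, Lemma~\ref{70} yields
\begin{equation*}
\sum_{m=t_{i+1}+1}^{t_i}|\zeta_m| \leq (t_i - t_{i+1})\,|\zeta_{t_{i+1}}| + E(g; a_{t_{i+1}}, a_{t_i}).
\end{equation*}
The new ingredient is to bound $|\zeta_{t_{i+1}}|$ using whichever hypothesis is sharper at the given index. Set $M := \lceil (1/a)^{\beta/\alpha}/\gamma\rceil$. For $i < M-1$, condition~(1) gives $|\zeta_{t_{i+1}}| = O((i+1)^\alpha)$; for $i \geq M-1$, the crude bound $t_{i+1} \leq 1/(a\cdot((i+1)a+b)) \leq \gamma (1/a)^{2-\beta/\alpha}$ places $t_{i+1}$ inside the range where condition~(2) applies to the original excursion, yielding $|\zeta_{t_{i+1}}| = O((1/a)^\beta)$. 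Combining with $t_i - t_{i+1} = O(1/(i^2 a^2))$, an integral comparison shows that both halves of $\sum_i (t_i - t_{i+1})|\zeta_{t_{i+1}}|$ are of order $(1/a)^{2+\beta(1-1/\alpha)}$; they match precisely because $M$ is the index at which $(i+1)^\alpha$ and $(1/a)^\beta$ coincide. The remaining sum $\sum_i E(g; a_{t_{i+1}}, a_{t_i})$ is handled by the induction hypothesis — the sub-endpoints lie in $\Delta$ with $\max\{1/a', 1/b'\}$ strictly below $d$ — and telescopes to a contribution of the shape $CD/((ab)^{e-1}(a+b)a)$ for some $D<1$ independent of $d$, exactly as in Theorem~\ref{87}.

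For the second stretch I would apply Corollary~\ref{83} to reverse the excursion, reducing $\sum_{m=t_1+1}^{s}|\zeta_m|$ to $(s-t_1)|\zeta_s|$ plus the first-stretch energy of the reversed excursion. The first piece is controlled by condition~(1) combined with $(s-t_1) \leq 1/(b(a+b))$ from (\ref{35}); the second piece equals the first-stretch estimate of the previous paragraph with $a$ and $b$ swapped, and since condition~(2) is a hypothesis on \emph{every} excursion with endpoints in $\Delta$, it applies equally to the reversed excursion, so the same argument goes through unchanged. Putting everything together yields
\begin{equation*}
E(g;a,b) \leq P(d) + \frac{CD}{(ab)^e},
\end{equation*}
where $P(d)$ is an error of order $(1/a)^{2+\beta(1-1/\alpha)}$ at most; exploiting the inequality $1/(ab) \geq ((\sqrt{5}-1)/2)(d-1)^2$ valid on $\Delta$ when $d$ is large, one can choose the constants $C, d_0$ so that $P(d) \leq (1-D)\,C/(ab)^e$ for every $d > d_0$, closing the induction.

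The main obstacle is the bookkeeping at the transition index $M$: one must verify that condition~(2) legitimately applies to $\zeta_{t_{i+1}}$ in the \emph{original} excursion (rather than in some sub-excursion) precisely when $i \geq M-1$, and then patch the two piecewise bounds so that they combine cleanly with the telescoping recursive contribution. The exponent $e = 1 + \tfrac{\beta}{2}(1 - 1/\alpha) + \epsilon$ is pinned down by this balancing — it is the unique value for which the ``small $i$'' bound from condition~(1) and the ``large $i$'' bound from condition~(2) contribute at the same order, and below which the recursive factor $CD/(ab)^e$ coming from the induction step cannot be absorbed.
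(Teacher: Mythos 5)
Your proposal is correct and recovers the theorem with the same overall scaffolding as the paper — induction on $d=\max\{1/a,1/b\}$ over $\Delta$, the sub-excursion decomposition via Lemma~\ref{70}, the inductive telescoping of $\sum_i E(g;a_{t_{i+1}},a_{t_i})$, and the reversal trick of Corollary~\ref{83} for the tail — but the way condition~(2) is brought in is a genuine variant. The paper \emph{redefines the cutoff}: instead of $c=[\tfrac{1-b}{a}]$, it sets $c=[\tfrac{1}{\gamma a^{\beta/\alpha}}]+1$ (which requires extra care in choosing $d_2$ so that $c\geq 2$), keeps the sub-excursion decomposition only for $i\in[1,c-1]$, and applies condition~(2) directly to $|\zeta_m|$ for $m\in[1,t_c]$, giving $\sum_{m=1}^{t_c}|\zeta_m|\leq 2t_cC_2d^\beta\leq 2\gamma C_2 d^{2+\beta-\beta/\alpha}$. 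You instead keep $c=[\tfrac{1-b}{a}]$ and the full decomposition, and apply condition~(2) only to the \emph{boundary} values $\zeta_{t_{i+1}}$ once $i\geq M-1$, using the crude bound $t_{i+1}\leq 1/(a((i+1)a+b))$ to land in the admissible range. Your computation that both halves of $\sum_i(t_i-t_{i+1})|\zeta_{t_{i+1}}|$ are $O(d^{2+\beta(1-1/\alpha)})$ is correct: for $i<M-1$, $\sum i^{\alpha-2}/a^2=O(M^{\alpha-1}/a^2)$ gives the exponent; for $i\geq M-1$, $\sum 1/(i^2a^2)\cdot d^\beta = O(d^\beta/(Ma^2))$ gives the same, and $M\approx(1/a)^{\beta/\alpha}/\gamma$ is exactly the balance point. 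The telescoping bound (\ref{21}) is independent of the upper limit $c$, so the recursive term is unchanged, and the error $P(d)=O(d^{2+\beta(1-1/\alpha)})$ is $o(d^{2e})$ since $2e>2+\beta(1-1/\alpha)$, closing the induction exactly as in Theorem~\ref{87}. The trade-off is that the paper's redefinition of $c$ is cleaner (no piecewise patching inside the sum), while your route avoids the additional constraints on $d_2$ that the paper must impose to make the reduced $c$ well-behaved; both yield the same exponent $1+\tfrac{\beta}{2}(1-\tfrac{1}{\alpha})+\epsilon$.
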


\subsection{Proof}

\begin{proof}[proof] (of Theorem \ref{56})

The proof is highly similar to the proof of Theorem \ref{87}. We will only highlight the differences in this proof and omit the common parts. In the remainder of this section, we will use $E(a,b)$ instead of $E(g;a,b)$ for conciseness.

We need to prove that for any $e>\frac{1+\alpha}{2}$,
$$E(a,b)<\frac{C}{(ab)^e}$$
for $(a,b)\in\Delta$ .

Instead of (\ref{26}), we pick $d_1>0$ such that for any $d> d_1$,
$$2C_1d^2+4C_1d+8C_1d^2\log d<\left(\frac{\sqrt{5}-1}{2}\right)^{e}(1-D)(d-1)^{2e}$$
when $\alpha=1$;
$$2^{\alpha}C_1d^2+4C_1d^\alpha+\frac{8C_1(d+1)^{\alpha-1}d^2}{\alpha-1}<\left(\frac{\sqrt{5}-1}{2}\right)^{e}(1-D)(d-1)^{2e}$$
when $\alpha>1$.

Let $d_0=\max\{d_1,d_2\}$.

When $\max\{\frac{1}{a},\frac{1}{b}\}\leq d_0$, we have
\begin{align*}\zeta'_i=g(a_{k-1},a_k)&<C_1\left(\left(\frac{a_{k-1}}{a_k}\right)^\alpha+\left(\frac{a_k}{a_{k-1}}\right)^\alpha\right)\\
&\leq C_1\left(\left(\frac{1}{a_k}\right)^\alpha+\left(\frac{1}{a_{k-1}}\right)^\alpha\right)
\leq2C_1d_0^\alpha\end{align*}
for $i\in[1,s]$.

Instead of (\ref{38}), we have
$$E(a,b)=\sum\limits_{i=1}^s|\zeta_i|\leq\sum\limits_{i=1}^s\sum\limits_{j=1}^i|\zeta'_j|<\sum\limits_{i=1}^s2C_1d_0^\alpha i$$
$$<2C_1s^2d_0^\alpha<\frac{2C_1d_0^\alpha}{a^2b^2}\leq2C_1d_0^{4+\alpha}.$$

Let $C=\max\{2C_1d_0^{4+\alpha},1\}$.\\

(1) For $(a,b)\in\Delta$, when $\max\{\frac{1}{a},\frac{1}{b}\}\in[1,d_0]$, we have
$$E(a,b)< C\leq \frac{C}{(ab)^e}.$$\\

(2) Now, assume for $(a,b)\in\Delta$, when $\max\{\frac{1}{a},\frac{1}{b}\}\in[1,d-1]$, we have
$$E(a,b)< \frac{C}{(ab)^e},$$
where $d=d_0+d'$, $d'\in\mathbb{Z}^+$.

We now consider the case when $(a,b)\in\Delta$, $\max\{\frac{1}{a},\frac{1}{b}\}\in(d-1,d]$.

Instead of (\ref{3}), we have
\begin{align*}|\zeta_{t_i}|<C_1\left(\left(\frac{a_{t_i}}{a_0}\right)^\alpha+\left(\frac{a_0}{a_{t_i}}\right)^\alpha\right)&=
C_1\left(\left(\frac{ia+b}{a}\right)^\alpha+\left(\frac{a}{ia+b}\right)^\alpha\right)\\
&<2C_1(i+1)^\alpha.\end{align*}

Instead of (\ref{19}), we have
\begin{align}
\sum_{i=1}^{c-1}(t_i-t_{i+1})|\zeta_{t_{i+1}}|&<\sum_{i=1}^{c-1}(t_i-t_{i+1})2C_1(i+1)^\alpha \nonumber  \\  \nonumber
&<\sum_{i=1}^{c-1}\frac{1}{(ia+b)((i+1)a+b)}2C_1(i+1)^\alpha\\ \nonumber
&<\sum_{i=1}^{c-1}\frac{1}{i(i+1)a^2}2C_1(i+1)^\alpha\\ \nonumber
&\leq\sum_{i=1}^{c-1}\frac{2}{(i+1)^2a^2}2C_1(i+1)^\alpha\\
&\leq\sum_{i=1}^{c-1}\frac{4C_1}{(i+1)^{2-\alpha}a^2}.       \label{29}
\end{align}

When $\alpha=1$,
$$\sum_{i=1}^{c-1}(t_i-t_{i+1})|\zeta_{t_{i+1}}|<\frac{4C_1\log c}{a^2};$$
when $\alpha>1$,
$$\sum_{i=1}^{c-1}(t_i-t_{i+1})|\zeta_{t_{i+1}}|<\frac{4C_1(c+1)^{\alpha-1}}{(\alpha-1)a^2}.$$

Instead of (\ref{7}), we have
$$\sum\limits_{m=t_c+1}^{t_1}|\zeta_m|<\frac{4C_1\log c}{a^2}+\frac{CD}{(ab)^{e-1}(a+b)a},$$
when $\alpha=1$;
$$\sum\limits_{m=t_c+1}^{t_1}|\zeta_m|<\frac{4C_1(c+1)^{\alpha-1}}{(\alpha-1)a^2}+\frac{CD}{(ab)^{e-1}(a+b)a},$$
when $\alpha>1$.

Instead of (\ref{20}), we have
$$|\zeta_1|<C_1\left(\left(\frac{a_{0}}{a_1}\right)^\alpha+\left(\frac{a_1}{a_{0}}\right)^\alpha\right)<2C_1\left(\frac{a_1}{a_0}\right)^\alpha\leq2C_1\left(\frac{1}{a}\right)^\alpha.$$

Instead of (\ref{10}), we have
$$\sum\limits_{m=1}^{t_1}|\zeta_m|<2C_1d+4C_1d^2\log d+\frac{CD}{(ab)^{e-1}(a+b)a},$$
when $\alpha=1$;
$$\sum\limits_{m=1}^{t_1}|\zeta_m|<2C_1d^\alpha+\frac{4C_1(d+1)^{\alpha-1}d^2}{\alpha-1}+\frac{CD}{(ab)^{e-1}(a+b)a},$$
when $\alpha>1$.

Instead of (\ref{11}), we have
$$|\zeta_s|<C_1\left(\left(\frac{a_{0}}{a_s}\right)^\alpha+\left(\frac{a_s}{a_{0}}\right)^\alpha\right)\leq2C_1\left(\frac{\sqrt{5}+1}{2}\right)^\alpha<2^{\alpha+1}C_1.$$

Instead of (\ref{15}) and (\ref{27}), we have
$$\sum\limits_{m=t_1+1}^s|\zeta_m|<4C_1(s-t_1)+2C_1d+4C_1d^2\log d+\frac{CD}{(ab)^{e-1}(a+b)b},$$
\begin{align*}
E(a,b)&<2C_1d^2+4C_1d+8C_1d^2\log d+\frac{CD}{(ab)^{e}}\\
&<\left(\frac{\sqrt{5}-1}{2}\right)^{e}(1-D)(d-1)^{2e}+\frac{CD}{(ab)^e}\\
&<\frac{1-D}{(ab)^e}+\frac{CD}{(ab)^e}\\
&<\frac{C(1-D)}{(ab)^e}+\frac{CD}{(ab)^e}\\
&=\frac{C}{(ab)^e},
\end{align*}
when $\alpha=1$;
$$\sum\limits_{m=t_1+1}^s|\zeta_m|<2^{\alpha+1}C_1(s-t_1)+2C_1d^\alpha+\frac{4C_1(d+1)^{\alpha-1}d^2}{\alpha-1}+\frac{CD}{(ab)^{e-1}(a+b)b},$$
\begin{align*}
E(a,b)&<2^{\alpha}C_1d^2+4C_1d^\alpha+\frac{8C_1(d+1)^{\alpha-1}d^2}{\alpha-1}+\frac{CD}{(ab)^{e}}\\
&<\left(\frac{\sqrt{5}-1}{2}\right)^{e}(1-D)(d-1)^{2e}+\frac{CD}{(ab)^e}\\
&=\frac{C}{(ab)^e},
\end{align*}
when $\alpha>1$.

This means that we have successfully completed the induction.

\end{proof}

\begin{rrr}
1. Let $\alpha=1$. It would be the case for $\hat{k}$.

2. The reason why this theorem does not apply for $\alpha<1$ is that in (\ref{29}),
$$\frac{1}{2^{2-\alpha}}\leq\sum_{i=1}^{c-1}\frac{1}{(i+1)^{2-\alpha}}<\frac{1}{\alpha-1}.$$
Therefore, the control over $\sum\limits_{i=1}^{c-1}(t_i-t_{i+1})|\zeta_{t_{i+1}}|$ would be $O(d^2)$. Consequently, the result would be $\sum\limits_{i=1}^{s}|\zeta_i|=O\left(\left(\frac{1}{a_0a_s}\right)^{1+\epsilon}\right)$ instead of
$\sum\limits_{i=1}^{s}|\zeta_i|=O\left(\left(\frac{1}{a_0a_s}\right)^{\frac{\alpha+1}{2}+\epsilon}\right)$ for $(a_0,a_s)\in\Delta$.

\end{rrr}

\begin{proof}[proof] (of Theorem \ref{57})
The proof is highly similar to the proof of Theorem \ref{87}. We will only highlight the differences in this proof and omit the common parts.

We need to prove that for any $e>1+\frac{\beta}{2}(1-\frac{1}{\alpha})$,
$$E(a,b)<\frac{C}{(ab)^e}$$
for $(a,b)\in\Delta$ .

Let $z=\frac{\beta}{\alpha}\in(0,1)$.

Instead of (\ref{26}), since
$$2+\beta-z=z(\alpha-1)+2=2\left(1+\frac{\beta}{2}\left(1-\frac{1}{\alpha}\right)\right)<2e,$$
we can pick $d_1>0$ such that for any $d> d_1$,
$$2^{\alpha}C_1d^2+4\gamma C_2d^{2+\beta-z}+\frac{8C_1\left(\frac{d^z}{\gamma}+2\right)^{\alpha-1}d^2}{\alpha-1}<\left(\frac{\sqrt{5}-1}{2}\right)^{e}(1-D)(d-1)^{2e}.$$

Instead of setting $d_2=2\sqrt{5}+2$, we select $d_2>0$ such that for any $d>d_2$, we have
$$\left(\frac{1}{d}\right)^z\left(\frac{1+\sqrt{5}}{2}\right)^z\leq\frac{1}{\gamma},$$
$$\frac{M}{\gamma}\left(\frac{1}{d}\right)^{1-z}\left(\frac{1+\sqrt{5}}{2}\right)^{1-z}+2\left(\frac{1}{d}\right)\left(\frac{1+\sqrt{5}}{2}\right)\leq1.$$

When $\max\{\frac{1}{a},\frac{1}{b}\}=d>d_2$, we obtain
$$a^z\leq\min\{a,b\}^z\left(\frac{1+\sqrt{5}}{2}\right)^z\leq\left(\frac{1}{d}\right)^z\left(\frac{1+\sqrt{5}}{2}\right)^z\leq\frac{1}{\gamma},$$
$$\frac{1}{\gamma}a^{1-z}+a+b\leq\frac{1}{\gamma}\min\{a,b\}^{1-z}\left(\frac{1+\sqrt{5}}{2}\right)^{1-z}+2\min\{a,b\}\left(\frac{1+\sqrt{5}}{2}\right),$$
$$\leq\frac{1}{\gamma}\left(\frac{1}{d}\right)^{1-z}\left(\frac{1+\sqrt{5}}{2}\right)^{1-z}+2\left(\frac{1}{d}\right)\left(\frac{1+\sqrt{5}}{2}\right)\leq1.$$

Therefore we have
$$2\leq\frac{1}{\gamma a^z}+1\leq\frac{1-b}{a},$$
so
\begin{equation}
2\leq\left[\frac{1}{\gamma a^z}\right]+1\leq\left[\frac{1-b}{a}\right].     \label{30}
\end{equation}

By symmetry, we also obtain
$$2\leq\left[\frac{1}{\gamma b^z}\right]+1\leq\left[\frac{1-a}{b}\right].$$

Let $d_0=\max\{d_1,d_2\}$.

When $\max\{\frac{1}{a},\frac{1}{b}\}\leq d_0$,
\begin{align*}\zeta'_i=g(a_{k-1},a_k)&<C_1\left(\left(\frac{a_{k-1}}{a_k}\right)^\alpha+\left(\frac{a_k}{a_{k-1}}\right)^\alpha\right)\\
&\leq C_1\left(\left(\frac{1}{a_k}\right)^\alpha+\left(\frac{1}{a_{k-1}}\right)^\alpha\right)
\leq2C_1d_0^\alpha\end{align*}
for $i\in[1,s]$.\\

Instead of (\ref{38}), we have
$$E(a,b)=\sum\limits_{i=1}^s|\zeta_i|\leq\sum\limits_{i=1}^s\sum\limits_{j=1}^i|\zeta'_j|<\sum\limits_{i=1}^s2C_1d_0^\alpha i,$$
$$<2C_1s^2d_0^\alpha<\frac{2C_1d_0^\alpha}{a^2b^2}\leq2C_1d_0^{4+\alpha}.$$

Let $C=\max\left\{2C_1d_0^{4+\alpha},1\right\}$.

(1) For $(a,b)\in\Delta$, when $\max\{\frac{1}{a},\frac{1}{b}\}\in[1,d_0]$, we have
$$E(a,b)< C\leq \frac{C}{(ab)^e}$$.\\

(2) Now, assume for $(a,b)\in\Delta$, when $\max\{\frac{1}{a},\frac{1}{b}\}\in[1,d-1]$, we have
$$E(a,b)< \frac{C}{(ab)^e},$$
where $d=d_0+d'$, $d'\in\mathbb{Z}^+$.

We now consider the case when $(a,b)\in\Delta$, $\max\{\frac{1}{a},\frac{1}{b}\}\in(d-1,d]$.

Instead of setting $c=\left[\frac{1-b}{a}\right]$, we let $c=\left[\frac{1}{\gamma a^z}\right]+1$. By (\ref{30}), we know that
$$c\in\left[2,\left[\frac{1-b}{a}\right]\right]$$.

Instead of (\ref{3}), we derive
\begin{align*}|\zeta_{t_i}|<C_1\left(\left(\frac{a_{t_i}}{a_0}\right)^\alpha+\left(\frac{a_0}{a_{t_i}}\right)^\alpha\right)&=
C_1\left(\left(\frac{ia+b}{a}\right)^\alpha+\left(\frac{a}{ia+b}\right)^\alpha\right)\\
&<2C_1(i+1)^\alpha.\end{align*}

Instead of (\ref{19}), we have
\begin{align*}
\sum_{i=1}^{c-1}(t_i-t_{i+1})|\zeta_{t_{i+1}}|&<\sum_{i=1}^{c-1}(t_i-t_{i+1})2C_1(i+1)^\alpha\\
&<\sum_{i=1}^{c-1}\frac{1}{(ia+b)((i+1)a+b)}2C_1(i+1)^\alpha\\
&<\sum_{i=1}^{c-1}\frac{1}{i(i+1)a^2}2C_1(i+1)^\alpha\\
&\leq\sum_{i=1}^{c-1}\frac{2}{(i+1)^2a^2}2C_1(i+1)^\alpha\\
&\leq\sum_{i=1}^{c-1}\frac{4C_1}{(i+1)^{2-\alpha}a^2} \\
&<\frac{4C_1(c+1)^{\alpha-1}}{(\alpha-1)a^2}.
\end{align*}

Instead of (\ref{7}), we have
$$\sum\limits_{m=t_c+1}^{t_1}|\zeta_m|<\frac{4C_1(c+1)^{\alpha-1}}{(\alpha-1)a^2}+\frac{CD}{(ab)^{e-1}(a+b)a}.$$

Instead of (\ref{20}), since
$$t_c<\frac{1}{a(ca+b)}<\frac{1}{ca^2}<\frac{1}{\frac{1}{\gamma a^z}a^2}=\left(\frac{1}{a}\right)^{2-z}\leq\gamma d^{2-z},$$
we have
$$|\zeta_i|<C_2\left(\left(\frac{1}{a}\right)^\beta+\left(\frac{1}{b}\right)^\beta\right)\leq2C_2d^\beta$$
for $i\in\left[1,t_c\right]$.

Instead of (\ref{10}), we can obtain
\begin{align*}
\sum\limits_{m=1}^{t_1}|\zeta_m|&=\sum\limits_{m=1}^{t_c}|\zeta_m|+\sum\limits_{m=t_c+1}^{t_1}|\zeta_m|\\
&<2t_cC_2d^\beta+\frac{4C_1(c+1)^{\alpha-1}d^2}{\alpha-1}+\frac{CD}{(ab)^{e-1}(a+b)a}\\
&<2\gamma C_2d^{2+\beta-z}+\frac{4C_1(\left[\frac{1}{\gamma a^z}\right]+2)^{\alpha-1}d^2}{\alpha-1}+\frac{CD}{(ab)^{e-1}(a+b)a}\\
&<2\gamma C_2d^{2+\beta-z}+\frac{4C_1(\frac{1}{\gamma a^z}+2)^{\alpha-1}d^2}{\alpha-1}+\frac{CD}{(ab)^{e-1}(a+b)a}\\
&\leq2\gamma C_2d^{2+\beta-z}+\frac{4C_1(\frac{d^z}{\gamma}+2)^{\alpha-1}d^2}{\alpha-1}+\frac{CD}{(ab)^{e-1}(a+b)a}.
\end{align*}

Instead of (\ref{11}), we have
$$|\zeta_s|<C_1\left(\left(\frac{a_{0}}{a_s}\right)^\alpha+\left(\frac{a_s}{a_{0}}\right)^\alpha\right)\leq2C_1\left(\frac{\sqrt{5}+1}{2}\right)^\alpha<2^{\alpha+1}C_1.$$

Instead of (\ref{15}) and (\ref{27}), we derive
\begin{align*}\sum\limits_{m=t_1+1}^s|\zeta_m|<&2^{\alpha+1}C_1(s-t_1)+2\gamma C_2d^{2+\beta-z}\\
&+\frac{4MC_1\left(\frac{d^z}{\gamma}+2\right)^{\alpha-1}d^2}{\alpha-1}+\frac{CD}{(ab)^{e-1}(a+b)b},\end{align*}
\begin{align*}
E(a,b)&<2^{\alpha}C_1d^2+4\gamma C_2d^{2+\beta-z}+\frac{8C_1\left(\frac{Md^z}{\gamma}+2\right)^{\alpha-1}d^2}{\alpha-1}+\frac{CD}{(ab)^{e}}\\
&<\left(\frac{\sqrt{5}-1}{2}\right)^{e}(1-D)(d-1)^{2e}+\frac{CD}{(ab)^e}\\
&<\frac{1-D}{(ab)^e}+\frac{CD}{(ab)^e}\\
&<\frac{C(1-D)}{(ab)^e}+\frac{CD}{(ab)^e}\\
&=\frac{C}{(ab)^e}.
\end{align*}

This means that we have already completed the induction.

\end{proof}

\begin{rrr}
1. In the second condition of Theorem \ref{57}, we only require that $$|\zeta_i|<C_2\left(\left(\frac{1}{a_0}\right)^\beta+\left(\frac{1}{a_s}\right)^\beta\right)$$
holds for $i\in\left[1,\gamma\left(\frac{1}{a_0}\right)^{2-\frac{\beta}{\alpha}}\right)$.

Since
$$s=\frac{3}{\pi^2}\frac{1}{a_0a_s}+O\left(\max\left\{\frac{1}{a_0},\frac{1}{a_s}\right\}\log\left(\min\left\{\frac{1}{a_0},\frac{1}{a_s}\right\}\right)\right),$$
therefore,
$$\frac{s}{\frac{1}{a_0a_s}}=\frac{3}{\pi^2}+O\left(\frac{\log\left(\min\left\{\frac{1}{a_0},\frac{1}{a_s}\right\}\right)}{\min\left\{\frac{1}{a_0},\frac{1}{a_s}\right\}}\right)\rightarrow\frac{3}{\pi^2}$$
as $(a_0,a_s)\rightarrow(0,0)$ in $\Delta$.

Thus, we have
$$\frac{\gamma(\frac{1}{a_0})^{2-\frac{\beta}{\alpha}}}{s}=\frac{\gamma(\frac{1}{a_0})^{1-\frac{\beta}{\alpha}}}{\frac{1}{a_s}}
\frac{\frac{1}{a_0a_s}}{s}\leq\frac{\sqrt{5}+1}{2}\frac{\gamma(\frac{1}{a_0})^{1-\frac{\beta}{\alpha}}}{\frac{1}{a_0}}
\frac{\frac{1}{a_0a_s}}{s}\rightarrow0$$
as $(a_0,a_s)\rightarrow(0,0)$ in $\Delta$.

So we know that as $(a_0,a_s)$ approaches $(0,0)$, the ratio of the length of the interval that the condition needs to be satisfied to the length of the excursion goes to $0$.

2. When $\beta\geq\alpha$, we have $1+\frac{\beta}{2}(1-\frac{1}{\alpha})\geq\frac{\alpha+1}{2}$, which indicates that Theorem \ref{56} has a better control over $\sum\limits_{i=1}^{s}|\zeta_i|$ than Theorem \ref{57}.

3. When $\beta<0$, we have
$$\frac{\gamma(\frac{1}{a_0})^{2-\frac{\beta}{\alpha}}}{s}=\frac{\gamma(\frac{1}{a_0})^{1-\frac{\beta}{\alpha}}}{\frac{1}{a_s}}
\frac{\frac{1}{a_0a_s}}{s}\geq\frac{\sqrt{5}-1}{2}\frac{\gamma(\frac{1}{a_0})^{1-\frac{\beta}{\alpha}}}{\frac{1}{a_0}}
\frac{\frac{1}{a_0a_s}}{s}\rightarrow+\infty$$
as $(a_0,a_s)\rightarrow(0,0)$ in $\Delta$.

Therefore, when $(a_0,a_s)$ is small enough such that $\gamma(\frac{1}{a_0})^{2-\frac{\beta}{\alpha}}>s$, we have
$$\sum\limits_{i=1}^{s}|\zeta_i|<sC_2\left(\left(\frac{1}{a_0}\right)^\beta+\left(\frac{1}{a_s}\right)^\beta\right)<\frac{C_2}{a_0a_s}\left(\left(\frac{1}{a_0}\right)^\beta+\left(\frac{1}{a_s}\right)^\beta\right)$$$$<2C_2\left(\frac{\sqrt{5}+1}{2}\right)^{-\frac{\beta}{2}}
\left(\frac{1}{a_0a_s}\right)^{1+\frac{\beta}{2}}.$$
Hence, we can conclude that
$$\sum\limits_{i=1}^{s}|\zeta_i|=O\left(\left(\frac{1}{a_0a_s}\right)^{1+\frac{\beta}{2}}\right).$$
Since $1+\frac{\beta}{2}<1+\frac{\beta}{2}\left(1-\frac{1}{\alpha}\right)$, this control is better than that provided by Theorem \ref{57}.

4. It is worth noting that for any $0<\beta<\alpha$ with $\alpha>1$, all the functions that satisfy the conditions of Theorem \ref{57} form a linear function space.
\end{rrr}

\section{Approximation of the RH}
\label{101}

In this section, we summarize the results from the perspective of function analysis. Then, we present some ideas about the possible path toward the RH through approximation, and we will raise some related questions for further study in \S\ref{32877}.

For any function $g$ defined on the Farey triangle, let
$$\bar{g}_n:=\frac{\sum\limits_{i=0}^{A_n-1}g\left(T^i\left(\frac{1}{n},1\right)\right)}{A_n},$$
$$\mathscr{F}_n(g):=\sum\limits_{i=1}^{A_n}\left|\sum\limits_{j=0}^{i-1}\left(g\left(T^j\left(\frac{1}{n},1\right)\right)-\bar{g}_n\right)\right|.$$

One simple fact is the triangle property.
\begin{llll}   \label{97}
$$\mathscr{F}_n(g_1+g_2)\leq\mathscr{F}_n(g_1)+\mathscr{F}_n(g_2).$$
\end{llll}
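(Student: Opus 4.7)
The plan is to exploit the linearity of both the averaging operator $g\mapsto\bar{g}_n$ and the partial cocycle sum $g\mapsto\sum_{j=0}^{i-1}\bigl(g(T^j(\tfrac{1}{n},1))-\bar{g}_n\bigr)$, and then to invoke the usual triangle inequality $|x+y|\leq|x|+|y|$ for real numbers. This reduces the statement to a routine two-line verification.

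First I would observe that $g\mapsto\bar{g}_n$ is $\mathbb{R}$-linear: since summation over $i\in\{0,\ldots,A_n-1\}$ and division by $A_n$ both distribute over the pointwise sum $g_1+g_2$, we have $\overline{g_1+g_2}_n=\bar{g}_{1,n}+\bar{g}_{2,n}$. Substituting this into the definition of $\mathscr{F}_n(g_1+g_2)$, the inner quantity for each fixed $i$ splits as
$$\sum_{j=0}^{i-1}\Bigl((g_1+g_2)\bigl(T^j(\tfrac{1}{n},1)\bigr)-\overline{g_1+g_2}_n\Bigr)=\sum_{j=0}^{i-1}\bigl(g_1(T^j(\tfrac{1}{n},1))-\bar{g}_{1,n}\bigr)+\sum_{j=0}^{i-1}\bigl(g_2(T^j(\tfrac{1}{n},1))-\bar{g}_{2,n}\bigr).$$

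Second, applying $|x+y|\leq|x|+|y|$ to each term of the outer sum over $i\in\{1,\ldots,A_n\}$ and then splitting the sum by linearity yields
$$\mathscr{F}_n(g_1+g_2)\leq\sum_{i=1}^{A_n}\left|\sum_{j=0}^{i-1}\bigl(g_1(T^j(\tfrac{1}{n},1))-\bar{g}_{1,n}\bigr)\right|+\sum_{i=1}^{A_n}\left|\sum_{j=0}^{i-1}\bigl(g_2(T^j(\tfrac{1}{n},1))-\bar{g}_{2,n}\bigr)\right|=\mathscr{F}_n(g_1)+\mathscr{F}_n(g_2),$$
which is precisely the claimed triangle inequality. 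There is essentially no obstacle here; the only ``content'' is the linearity of $\bar{(\cdot)}_n$, which is immediate from the definition. The result says $\mathscr{F}_n$ is a seminorm on the space of functions on $\Omega$, a fact that will presumably be used in \S\ref{32877} to justify approximating $R-\tfrac{n^2}{A_n}$ by $\hat{k}-3$ (or other discrete proxies) within a function-analytic framework.
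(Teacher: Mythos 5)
Your proof is correct and is exactly the routine verification the paper has in mind; indeed the paper states this lemma without proof, calling it ``one simple fact,'' and the intended argument is precisely the linearity of $g\mapsto\bar{g}_n$ together with the pointwise triangle inequality, as you have written. Nothing further is needed.
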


By \S\ref{96}, we know that the RH is equivalent to
$$\mathscr{F}_n(R)=O\left(n^{\frac{5}{2}+\epsilon}\right).$$

At the end of \S\ref{95}, we mention that $\left|\sum\limits_{i=1}^{A_n}\left|\theta'_{i,\lambda_2}\right|-\sum\limits_{i=1}^{A_n}\left|\theta'_{i,\lambda_1}\right|\right|=O(n^2)$.
By Theorem \ref{86}, we know that $\sum\limits_{i=1}^{A_n}|\theta'_{i,\lambda_1}|=\sum\limits_{i=1}^{A_n}|\theta'_i|=O\left(n^{2+\epsilon}\right)$;
by the definition, we know that $\sum\limits_{i=1}^{A_n}|\theta'_{i,\lambda_2}|=\mathscr{F}_n(\hat{k})$. So, we have
$$\mathscr{F}_n(\hat{k})=O\left(n^{2+\epsilon}\right).$$

Since
$$\mathscr{F}_n(R)\leq\mathscr{F}_n(\hat{k})+\mathscr{F}_n(R-\hat{k}),$$
we have the following Corollary:
\begin{ccc}
The RH is equivalent to
$$\mathscr{F}_n\left(R-\hat{k}\right)=O\left(n^{\frac{5}{2}+\epsilon}\right).$$
\end{ccc}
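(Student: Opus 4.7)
The plan is to derive this equivalence as a direct consequence of the triangle inequality (Lemma \ref{97}) together with the two facts already in hand: the Franel--Landau reformulation asserting that RH is equivalent to $\mathscr{F}_n(R)=O(n^{5/2+\epsilon})$, and the main result in the form $\mathscr{F}_n(\hat{k})=O(n^{2+\epsilon})$. Since $n^{2+\epsilon}$ is of strictly smaller order than $n^{5/2+\epsilon}$, the functional $\mathscr{F}_n(\hat{k})$ is absorbed into the RH error term, making the two statements interchangeable up to $\hat{k}$.

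Concretely, I would proceed in two steps. First, write $R = \hat{k} + (R-\hat{k})$ and apply Lemma \ref{97} to obtain
\begin{equation*}
\mathscr{F}_n(R) \;\leq\; \mathscr{F}_n(\hat{k}) + \mathscr{F}_n(R-\hat{k}).
\end{equation*}
Plugging in $\mathscr{F}_n(\hat{k})=O(n^{2+\epsilon})$, any bound $\mathscr{F}_n(R-\hat{k})=O(n^{5/2+\epsilon})$ forces $\mathscr{F}_n(R)=O(n^{5/2+\epsilon})$, which is RH. Second, write $R-\hat{k} = R + (-\hat{k})$ and apply Lemma \ref{97} in the reverse direction:
\begin{equation*}
\mathscr{F}_n(R-\hat{k}) \;\leq\; \mathscr{F}_n(R) + \mathscr{F}_n(-\hat{k}) \;=\; \mathscr{F}_n(R) + \mathscr{F}_n(\hat{k}),
\end{equation*}
where I use the trivial observation that $\mathscr{F}_n$ is invariant under sign change (it is a sum of absolute values of partial sums, and replacing $g$ by $-g$ flips every inner sum's sign). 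Assuming RH, we then get $\mathscr{F}_n(R-\hat{k}) = O(n^{5/2+\epsilon}) + O(n^{2+\epsilon}) = O(n^{5/2+\epsilon})$.

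There is no serious obstacle; the proof is essentially two invocations of the triangle inequality plus absorption of the subleading term. The only minor point worth checking is that Lemma \ref{97} extends to the difference $g_1 - g_2$, which follows immediately from sign invariance of $\mathscr{F}_n$ noted above, or alternatively by noting that $\mathscr{F}_n$ is a seminorm on the space of functions on $\Omega$. The corollary then follows by combining the two displayed inequalities with the Franel--Landau reformulation and Theorem \ref{86}.
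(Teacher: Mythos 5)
Your proof is correct and follows essentially the same approach as the paper: decompose $R$ as $\hat{k}+(R-\hat{k})$, invoke Lemma \ref{97}, and absorb the $O(n^{2+\epsilon})$ contribution of $\mathscr{F}_n(\hat{k})$ into the RH error term $O(n^{5/2+\epsilon})$. In fact the paper only displays the one inequality $\mathscr{F}_n(R)\leq\mathscr{F}_n(\hat{k})+\mathscr{F}_n(R-\hat{k})$ before stating the corollary, leaving the converse direction (and the sign invariance $\mathscr{F}_n(-g)=\mathscr{F}_n(g)$ needed for it) implicit, so your write-up is actually slightly more complete than the paper's.
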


\begin{rrr}
What's different about $R-\hat{k}$ is that by Lemma \ref{54}, we have $-1\leq R-\hat{k}<2$. Besides, $\bar{R}_n-\bar{\hat{k}}_n=\frac{n^2}{A_n}-\frac{3A_n-1}{A_n}\approx\frac{\pi^2}{3}-3$, so when $n$ is large enough
$$\left|R-\hat{k}-\bar{R}_n+\bar{\hat{k}}_n\right|<2,$$
which means that $\sum\limits_{j=0}^{i-1}\left(R\left(T^j\left(\frac{1}{n},1\right)\right)-\hat{k}\left(T^j\left(\frac{1}{n},1\right)\right)-\bar{R}_n+\bar{\hat{k}}_n\right)$ is $2-$lipschitz. So instead of trying to control an unbounded function like $R-\frac{n^2}{A_n}\approx R-\frac{\pi^2}{3}$, we can try to control a bounded function.

Also, we need to mention that, by triangle inequality, we also know that the RH is equivalent to
$$\sum\limits_{i=1}^{A_n}\left|\sum\limits_{j=0}^{i-1}\left(R\left(T^j\left(\frac{1}{n},1\right)\right)-\hat{k}\left(T^j\left(\frac{1}{n},1\right)\right)-\frac{n^2}{A_n}+3\right)\right|=O\left(n^{\frac{5}{2}+\epsilon}\right),$$
and we also know that when $n$ is large enough,
$$\left|R-\hat{k}-\frac{n^2}{A_n}+3\right|<2,$$
which means that $\sum\limits_{j=0}^{i-1}\left(R\left(T^j\left(\frac{1}{n},1\right)\right)-\hat{k}\left(T^j\left(\frac{1}{n},1\right)\right)-\frac{n^2}{A_n}+3\right)$ is also $2-$lipschitz.
\end{rrr}

More generally, for any function $g$ defined on the Farey triangle, if $g$ satisfies the conditions of Theorem \ref{56} or Theorem \ref{57}, let $a_0=a_s=\frac{1}{n}$. Then, we have $$\sum\limits_{i=1}^{A_n}\left|\sum\limits_{j=0}^{i-1}g\left(T^j\left(\frac{1}{n},1\right)\right)\right|=O\left(n^{1+\alpha+\epsilon}\right)\text{ or } O\left(n^{2+\beta\left(1-\frac{1}{\alpha}\right)+\epsilon}\right).$$

By the condition of the theorems, we know that
$$\sum\limits_{i=0}^{A_n-1}g\left(T^i\left(\frac{1}{n},1\right)\right)<2C_1\left(\frac{\frac{1}{n}}{\frac{1}{n}}\right)^\alpha=2C_1,$$
so
$$\left|\sum\limits_{i=1}^{A_n}\left|\sum\limits_{j=0}^{i-1}\left(g\left(T^j\left(\frac{1}{n},1\right)\right)-\bar{g}_n\right)\right|-
\sum\limits_{i=1}^{A_n}\left|\sum\limits_{j=0}^{i-1}g\left(T^j\left(\frac{1}{n},1\right)\right)\right|\right|$$
$$\leq\left|\sum\limits_{i=1}^{A_n}\left|\sum\limits_{j=0}^{i-1}\bar{g}_n\right|\right|\leq
A_n\left|\sum\limits_{j=0}^{A_n-1}\bar{g}_n\right|=
A_n\left|\sum\limits_{j=0}^{A_n-1}g\left(T^j\left(\frac{1}{n},1\right)\right)\right|$$
$$<2C_1A_n=O(n^2).$$

Therefore
$$\mathscr{F}_n(g)=O\left(n^{1+\alpha+\epsilon}\right)\text{ or } O\left(n^{2+\beta\left(1-\frac{1}{\alpha}\right)+\epsilon}\right).$$

By the example suggested after Theorem \ref{56}, we know that for any $\lambda\in\mathbb{R}$, we have
$$\mathscr{F}_n\left(g_{\lambda}\right)=O\left(n^{2+\epsilon}\right).$$

By Lemma \ref{97}, we know that
\begin{ccc}
The RH is also equivalent to
$$\mathscr{F}_n\left(R-g_{\lambda}\right)=O\left(n^{\frac{5}{2}+\epsilon}\right).$$

\end{ccc}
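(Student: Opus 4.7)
The plan is to derive the corollary from three ingredients already in place: the reformulation in \S\ref{96} that the RH is equivalent to $\mathscr{F}_n(R) = O(n^{5/2+\epsilon})$; the bound $\mathscr{F}_n(g_\lambda) = O(n^{2+\epsilon})$ established in the paragraph immediately preceding the corollary as a consequence of Theorem \ref{56} applied to the example $g_\lambda$; and the triangle property of $\mathscr{F}_n$ supplied by Lemma \ref{97}. The entire argument is a two-line application of the triangle inequality.

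First I will record that $\mathscr{F}_n$ is invariant under sign change, $\mathscr{F}_n(-g) = \mathscr{F}_n(g)$. This is immediate from the definition: replacing $g$ by $-g$ replaces $\bar{g}_n$ by $-\bar{g}_n$, so each inner sum $\sum_{j=0}^{i-1}(g - \bar{g}_n)$ is merely negated, and the outer absolute values absorb the sign. Then I will apply Lemma \ref{97} to the decomposition $R = (R - g_\lambda) + g_\lambda$ to get
$$\mathscr{F}_n(R) \leq \mathscr{F}_n(R - g_\lambda) + \mathscr{F}_n(g_\lambda),$$
and to $R - g_\lambda = R + (-g_\lambda)$ together with the sign invariance to get
$$\mathscr{F}_n(R - g_\lambda) \leq \mathscr{F}_n(R) + \mathscr{F}_n(g_\lambda).$$

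Since $\mathscr{F}_n(g_\lambda) = O(n^{2+\epsilon})$ is swallowed by any $O(n^{5/2+\epsilon})$ error, these two inequalities force $\mathscr{F}_n(R) = O(n^{5/2+\epsilon})$ and $\mathscr{F}_n(R - g_\lambda) = O(n^{5/2+\epsilon})$ to be logically equivalent. Combining with the RH reformulation from \S\ref{96} completes the proof. There is no genuine obstacle at this step; the corollary is a purely formal consequence of the triangle property, with all the substantive work carried by the bound $\mathscr{F}_n(g_\lambda) = O(n^{2+\epsilon})$, which in turn rests on verifying that $g_\lambda$ meets the reset-control hypothesis of Theorem \ref{56} with $\alpha = 1$ (as was done via the identity in Remark \ref{92} and the linear combination computation (\ref{98})).
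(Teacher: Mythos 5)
Your argument is correct and follows the paper's route exactly: both directions come from Lemma \ref{97} together with the bound $\mathscr{F}_n(g_\lambda)=O(n^{2+\epsilon})$ established just above the corollary, and the RH reformulation $\mathscr{F}_n(R)=O(n^{5/2+\epsilon})$ from \S\ref{96}. Your explicit note that $\mathscr{F}_n(-g)=\mathscr{F}_n(g)$ (so that the triangle inequality can be run in both directions) is a small but worthwhile clarification that the paper leaves implicit.
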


Like Lemma \ref{54}, we have the bound for $R-g_{\lambda}$.

For $\lambda=1$,
\begin{equation}
|R-g_1|=\left|\frac{1}{ab}-\frac{1}{a}-\frac{1}{b}\right|=\frac{a+b-1}{ab}=1-\frac{(1-a)(1-b)}{ab}\leq1.   \label{99}
\end{equation}

For any $\lambda\in\mathbb{R}$, by Lemma \ref{54}, (\ref{99}), and (\ref{98}), we have
$$|R-g_{\lambda}|=|(2\lambda-1)(R-g_1)+(2-2\lambda)(R-\hat{k})|<2|2\lambda-1|+|2-2\lambda|.$$
Also, we have $\bar{R}_n-\bar{g}_{\lambda,n}\approx\frac{\pi^2}{3}-3$,  so
$$\left|R-\hat{k}-\bar{R}_n+\bar{g}_{\lambda,n}\right|<2|2\lambda-1|+|2-2\lambda|+1$$
when $n$ is large enough.

For any piecewise continuous function $f$ such that $|R-f|$ is bounded, and for any $(p,q)=1$, by Theorem \ref{106}, we have
$$\lim_{n\rightarrow\infty}\frac{pq}{A_n}\sum_{(a,b)\in e\left(\frac qn,\frac pn\right)}
(f-R)(a,b)=\int_{\Omega}(f-R)dm.$$

Using the notation of Theorem \ref{106}, since $R(a,b)=\frac{1}{ab}$, $n\geq q_{k+1}>n-q\sim n$, we know that
$$\sum_{(a,b)\in e\left(\frac qn,\frac pn\right)}R(a,b)=
\sum_{i=0}^k\frac{n^2}{q_iq_{i+1}}=n^2\left(\frac{1}{pq}+\frac{1}{pq_{k+1}}\right)\sim\frac{n^2}{pq},$$
therefore
$$\lim_{n\rightarrow\infty}\frac{pq}{A_n}\sum_{(a,b)\in e\left(\frac qn,\frac pn\right)}R(a,b)=\frac{\pi^2}{3}=\int_{\Omega}R dm,$$
so we have the following Lemma
\begin{llll}  \label{107}
For any piecewise continuous function $f$ such that $|R-f|$ is bounded, for any $(p,q)=1$, we have
$$\lim_{n\rightarrow\infty}\frac{pq}{A_n}\sum_{(a,b)\in e\left(\frac qn,\frac pn\right)}
f(a,b)=\int_{\Omega}fdm$$.

\end{llll}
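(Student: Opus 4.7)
The plan is to decompose $f$ as $f = R + (f-R)$ and exploit linearity of the sum. The key observation is that although $R = 1/(ab)$ is unbounded on $\Omega$ so that Theorem~\ref{106} does not apply directly to $R$, the difference $g := f - R$ is bounded by hypothesis and piecewise continuous (as the difference of a piecewise continuous function and a continuous function on $\Omega$), so Theorem~\ref{106} does apply to $g$.

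Concretely, I would first split
$$\frac{pq}{A_n}\sum_{(a,b)\in e\left(\frac qn,\frac pn\right)} f(a,b) = \frac{pq}{A_n}\sum_{(a,b)\in e\left(\frac qn,\frac pn\right)} R(a,b) + \frac{pq}{A_n}\sum_{(a,b)\in e\left(\frac qn,\frac pn\right)} g(a,b).$$
For the $R$-piece, the computation performed in the paragraph preceding the lemma statement, using $R\!\left(\frac{q_i}{n},\frac{q_{i+1}}{n}\right) = \frac{n^2}{q_iq_{i+1}}$, the telescoping identity $\sum_{i=0}^{k}\frac{1}{q_iq_{i+1}} = \frac{1}{pq}+\frac{1}{pq_{k+1}}$, and the fact that $q_{k+1}\to\infty$ as $n\to\infty$, already establishes
$$\lim_{n\to\infty}\frac{pq}{A_n}\sum_{(a,b)\in e\left(\frac qn,\frac pn\right)} R(a,b) = \frac{\pi^2}{3} = \int_{\Omega}R\, dm.$$
For the $g$-piece, applying Theorem~\ref{106} directly gives
$$\lim_{n\to\infty}\frac{pq}{A_n}\sum_{(a,b)\in e\left(\frac qn,\frac pn\right)} g(a,b) = \int_{\Omega} g\, dm = \int_{\Omega} f\, dm - \int_{\Omega} R\, dm.$$

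Summing these two limits makes $\int_\Omega R\, dm$ cancel and yields the desired identity. There is essentially no obstacle here: the lemma is a linearity-type corollary obtained by combining Theorem~\ref{106} applied to the bounded, piecewise continuous function $f-R$ with the explicit evaluation of the $R$-sum already carried out in the excerpt. The only point meriting verification is that $f - R$ is indeed piecewise continuous, which is immediate from the piecewise continuity of $f$ and the continuity of $R$ on the open Farey triangle.
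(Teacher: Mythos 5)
Your proposal is correct and matches the paper's own derivation essentially verbatim: the paper likewise applies Theorem~\ref{106} to the bounded, piecewise continuous function $f-R$, evaluates the $R$-sum via the telescoping identity $\sum_{i=0}^{k}\frac{1}{q_iq_{i+1}}=\frac{1}{pq}+\frac{1}{pq_{k+1}}$ together with $q_{k+1}\sim n$ and $A_n\sim\frac{3}{\pi^2}n^2$, and then adds the two limits. No issues.
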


One simple application is that for any $g_{\lambda}$, we have
$$\lim_{n\rightarrow\infty}\frac{pq}{A_n}\sum_{(a,b)\in e\left(\frac qn,\frac pn\right)}
g_{\lambda}(a,b)=\int_{\Omega}g_{\lambda}dm=3.$$
More generally, if functions $f_i$, $i\in[1,n]$ satisfy the conditions of Theorem \ref{56} for some $\alpha\geq1$ or Theorem \ref{57} for some $\alpha>1$ and $0<\beta<\alpha$, then for $\lambda_i\in\mathbb{R}$, $i\in[1,n]$, $\sum\limits_{i=1}^n\lambda_if_i$ also satisfies the conditions of Theorem \ref{56} for $\alpha\geq1$ or Theorem \ref{57} for $\alpha>1$ and $0<\beta<\alpha$.

If $|R-f_i|$ is bounded for $i\in[1,n]$, and $\sum\limits_{i=1}^n\lambda_i=1$, $|R-\sum\limits_{i=1}^n\lambda_if_i|$ is also bounded.

Thus all the functions that satisfy the conditions of Theorem \ref{56} for some $\alpha\geq1$ form a linear function space $\mathscr{L}_{\alpha}$. All the functions $f\in\mathscr{L}_{\alpha}$ such that $|R-f|$ is bounded form an affine function space $\mathscr{B}_{\alpha}\subset\mathscr{L}_{\alpha}$. We know that $$g_{\lambda}\in\mathscr{B}_{\alpha}.$$

Similarly, all the functions that satisfy the conditions of Theorem \ref{57} for some $\alpha>1$ and $0<\beta<\alpha$ form a linear function space $\mathscr{L}_{\alpha,\beta}$. All the functions $f\in\mathscr{L}_{\alpha,\beta}$ such that $|R-f|$ is bounded form an affine function space $\mathscr{B}_{\alpha,\beta}\subset\mathscr{L}_{\alpha,\beta}$.

\section{Further questions}
\label{32877}
1. We are wondering under what conditions, if $g_k\rightarrow g$, then we have
$$\lim_{k\rightarrow+\infty}\lim_{n\rightarrow+\infty}\frac{\mathscr{F}_n(g_k)}{n^\xi}=\lim_{n\rightarrow+\infty}\frac{\mathscr{F}_n(g)}{n^\xi}$$
for any $\xi\geq2$.

Then if we have a sequence of functions $g_k\rightarrow R$ while $\lim\limits_{n\rightarrow+\infty}\frac{\mathscr{F}_n(g_k)}{n^{\frac{5}{2}+\epsilon}}=0$, then we will get $\lim\limits_{n\rightarrow+\infty}\frac{\mathscr{F}_n(R)}{n^{\frac{5}{2}+\epsilon}}=0$ which is equivalent to the RH.

Or, if there are some other results that can relate $\mathscr{F}_n(g_k)$ with $\mathscr{F}_n(g)$?\\

2. We are also wondering if we can construct a series of functions $g_k$ which are well-controlled while $g_k\rightarrow R$. Theorem \ref{56} and \ref{57} are two useful tools to prove that some functions are well-controlled.\\

3. At the end of Chapter \ref{101}, we define the linear function space $\mathscr{L}_{\alpha}$ and $\mathscr{L}_{\alpha,\beta}$, as well as the affine function space $\mathscr{B}_{\alpha}$ and $\mathscr{B}_{\alpha,\beta}$. What are the properties of these function spaces? Can we use them to help tackle the RH?\\

4. Like Theorem \ref{56} and \ref{57}, can we obtain some other useful tools to help prove or approximate the RH given some sufficient conditions?\\

5.  After the proof of Theorem \ref{106}, we mentioned that if $\frac pq$ is irrational, we are wondering if there is a similar result to Theorem \ref{106}. We conjecture that Theorem \ref{106} still holds.

\end{document}